\newtheorem{theorem}{Theorem}[section]
\newtheorem{prop}[theorem]{Proposition}
\newtheorem{lemma}[theorem]{Lemma}
\newtheorem{cor}[theorem]{Corollary}
\newtheorem*{T1bis}{Theorem~\ref{main:t1}bis}
\newtheorem*{C1bis}{Corollary~\ref{c1}bis}
\theoremstyle{definition}
\newtheorem{defn}{Definition}[section]
\newtheorem{remark}{Remark}[section]
\newtheorem{quest}[remark]{Question}
\newtheorem{ex}[remark]{Example}
\newtheorem*{D1bis}{Definition~\ref{norm:def}bis}
\newtheorem*{D2bis}{Definition~\ref{inv:def}bis}
\numberwithin{equation}{section}
\DeclareMathAlphabet{\matheur}{U}{eur}{m}{n}
\DeclareMathAlphabet{\matheus}{U}{eus}{m}{n}
\DeclareMathAlphabet{\matheuf}{U}{euf}{m}{n}
\newcommand{\abs}[1]{\left\lvert#1\right\rvert}
\DeclareMathOperator{\Div}{div}
\DeclareMathOperator{\dist}{dist}
\DeclareMathOperator{\diam}{diam}
\author{Asma Hassannezhad}
\address{Mathematisches Institut der Universit\"at M\"unchen, 
 Theresienstr. 39, D-80333 M\"unchen, Germany} 
 \address{Centre de recherches math\'ematiques, Universit\'e de Montr\'eal, Case postale 6128 Succursale Centre-ville Montr\'eal (QC), Canada H3C 3J7}
 \email{\tt Hassannezhad@crm.umontreal.ca}
\author{Gerasim Kokarev}
\address{Mathematisches Institut der Universit\"at M\"unchen, 
 Theresienstr. 39, D-80333 M\"unchen, Germany}
\email{\tt Gerasim.Kokarev@math.lmu.de}
 \subjclass[2010]{35P15, 58C40, 53C17, 32V20} \keywords{sub-Laplacian, eigenvalue bounds, sub-Riemannian manifold, Sasakian manifold. }
\title{Sub-Laplacian eigenvalue bounds on sub-Riemannian manifolds}
\begin{document}

\begin{abstract} 
We study eigenvalue problems for intrinsic sub-Laplacians on regular sub-Riemannian manifolds. We prove upper bounds for sub-Laplacian eigenvalues $\lambda_k$ of conformal sub-Riemannian metrics that are asymptotically sharp as $k\to +\infty$. For Sasakian manifolds with a lower Ricci curvature bound, and more generally, for contact metric manifolds conformal to such Sasakian manifolds, we obtain eigenvalue inequalities that can be viewed as versions of the classical results by Korevaar and Buser in Riemannian geometry.
\end{abstract}

\maketitle


\section{Introduction and statements of main results}
\subsection{Motivation}
Let $(M,\theta,\phi,g)$ be a compact contact metric manifold of dimension $(2\ell+1)$, possibly with boundary. Above $\theta$ is a contact form, $\phi$ is an operator field whose restriction $j(\phi)$ to the contact distribution is an almost complex structure, and $g$ is an associated Riemannian metric, see Sect. \ref{prem} for precise definitions. Denote by
$$
0=\lambda_1(g)<\lambda_2(g)\leqslant\lambda_3(g)\leqslant\ldots\leqslant\lambda_k(g)\leqslant\ldots
$$
the corresponding sub-Laplacian eigenvalues with Neumann boundary conditions. In~\cite{GK} the second-named author proved the following result.
\begin{theorem}
\label{t1}
Let $(M,\theta_0,\phi_0,g_0)$ be a compact contact metric manifold, possibly with boundary. Then there exists a constant $C$, possibly depending on the conformal class $[\theta_0]$ and $j(\phi_0)$, such that for any contact metric structure $(\theta,\phi,g)$ with $\theta=e^\varphi\theta_0$  and $j(\phi)=j(\phi_0)$ the sub-Laplacian eigenvalues $\lambda_k(g)$ satisfy the inequalities
\begin{equation}
\label{gk:eb}
\lambda_k(g)\mathit{Vol}_g(M)^{1/(\ell+1)}\leqslant C\cdot k^{1/(\ell+1)}\qquad\text{for any }k=1,2,\ldots,
\end{equation}
where the volume $\mathit{Vol}_g$ is the Riemannian volume of $g$.
\end{theorem}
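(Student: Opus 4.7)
The plan is to adapt the Grigor'yan--Netrusov--Yau annular decomposition method to the conformal sub-Riemannian setting. I first fix the Carnot--Carath\'eodory distance $d_0$ of the background structure $(\theta_0,\phi_0,g_0)$; by the theorems of Mitchell and Nagel--Stein--Wainger, the metric-measure space $(M,d_0,\mathit{Vol}_{g_0})$ is Ahlfors regular of homogeneous dimension $Q=2\ell+2$, and in particular $(M,d_0)$ is a doubling metric space with constant controlled by the conformal data $([\theta_0],j(\phi_0))$. Next I record the conformal transformation rules. Under $\theta=e^{\varphi}\theta_0$ with $j(\phi)=j(\phi_0)$, the horizontal sub-Riemannian metric satisfies $g_H=e^{\varphi}(g_0)_H$, so $|\nabla_H u|_g^2=e^{-\varphi}|\nabla_H u|_{g_0}^2$, while the identity $dV=c_\ell\,\theta\wedge(d\theta)^\ell$ together with a direct computation gives $dV_g=e^{(\ell+1)\varphi}\,dV_{g_0}$.

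Then I apply a Grigor'yan--Netrusov--Yau covering argument to $(M,d_0,\mathit{Vol}_g)$. The crucial observation is that only $d_0$ needs to be doubling (the finite measure $\mathit{Vol}_g$ need not be), so the argument yields $2k$ pairs $(x_i,r_i)$ whose ``doubled annuli'' $\widetilde A_i:=B_{d_0}(x_i,2r_i)\setminus B_{d_0}(x_i,r_i/4)$ are pairwise disjoint while the ``core annuli'' $A_i:=B_{d_0}(x_i,r_i)\setminus B_{d_0}(x_i,r_i/2)$ satisfy $\mathit{Vol}_g(A_i)\geq c\,\mathit{Vol}_g(M)/k$. On each $\widetilde A_i$ I build a $d_0$-Lipschitz cut-off $f_i$ with $f_i\equiv 1$ on $A_i$ and $|\nabla_H f_i|_{g_0}\leq C/r_i$, yielding $2k$ disjointly supported trial functions for Courant--Fischer.

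The heart of the argument is the Rayleigh-quotient estimate. The conformal rules give
\[
\int_M|\nabla_H f_i|_g^{2}\,dV_g \;\leq\; \frac{C}{r_i^{2}}\int_{\widetilde A_i}e^{\ell\varphi}\,dV_{g_0},
\]
and H\"older's inequality with exponents $(\ell+1)/\ell$ and $\ell+1$ bounds the right-hand side by $Cr_i^{-2}\,\mathit{Vol}_g(\widetilde A_i)^{\ell/(\ell+1)}\,\mathit{Vol}_{g_0}(\widetilde A_i)^{1/(\ell+1)}$. The key cancellation is that Ahlfors regularity yields $\mathit{Vol}_{g_0}(\widetilde A_i)\leq Cr_i^{Q}$ with $Q=2(\ell+1)$, so the last factor becomes $Cr_i^{2}$ and exactly absorbs the $r_i^{-2}$. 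Combined with the trivial lower bound $\int f_i^{2}\,dV_g\geq\mathit{Vol}_g(A_i)$ this gives
\[
R_g(f_i)\;\leq\; C\,\frac{\mathit{Vol}_g(\widetilde A_i)^{\ell/(\ell+1)}}{\mathit{Vol}_g(A_i)}.
\]
A pigeonhole on $\sum_i\mathit{Vol}_g(\widetilde A_i)\leq\mathit{Vol}_g(M)$ singles out at least $k$ indices with $\mathit{Vol}_g(\widetilde A_i)\leq 2\mathit{Vol}_g(M)/k$; for these, $R_g(f_i)\leq C(k/\mathit{Vol}_g(M))^{1/(\ell+1)}$, and the min-max principle produces the claimed bound.

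The step I expect to require the most care is extracting a form of the Grigor'yan--Netrusov--Yau decomposition that relies only on \emph{metric} doubling of $d_0$ and not on any doubling property of $\mathit{Vol}_g$, together with verifying that the doubling constant of $d_0$ and the passage from $d_0$-Lipschitz control to horizontal-gradient control depend only on $([\theta_0],j(\phi_0))$. Once this is in place the rest is essentially a bookkeeping exercise built around the exact numerology $Q=2(\ell+1)$ that makes the Ahlfors factor and the gradient factor cancel.
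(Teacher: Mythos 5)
Your argument is correct and coincides in essence with the method the paper itself uses (the proof of Theorem~\ref{main:t2} in Sect.~\ref{metas}, which gives Theorem~\ref{t1} once constants are allowed to depend on the background structure): Grigor'yan--Netrusov--Yau annuli taken in the fixed Carnot--Carath\'eodory metric $d_{g_0}$ with respect to the conformal measure, Lipschitz cut-offs, the conformal invariance of $\int\abs{\nabla_bu}^{2\ell+2}$, H\"older's inequality, and the uniform upper bound $\mathit{Vol}_{g_0}(B(x,r))\leqslant Cr^{2\ell+2}$. The only cosmetic adjustment is that Proposition~\ref{prop:glo} produces annuli with two independent radii $r_i<R_i$ rather than your fixed ratio, so the cut-off carries the two gradient bounds $2/r_i$ and $1/R_i$ on the inner and outer collars; the cancellation you describe then works separately on each collar, exactly as in the paper.
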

Note that the Riemannian volume $\mathit{Vol}_g$ coincides up to a constant with the volume defined by the form $\theta\wedge(d\theta)^\ell$, and in particular, the left-hand side in inequality~\eqref{gk:eb} is invariant under the scaling of the contact form $\theta$. In~\cite{GK} the theorem above is stated for pseudoconvex CR manifolds, but the proof carries over directly to contact manifolds. The methods can be also applied to sub-Laplacian eigenvalue problems on rather general sub-Riemannian manifolds. They lead to eigenvalue bounds that are analogous to the celebrated results of Korevaar~\cite{Korv} on Laplace eigenvalue bounds for conformal Riemannian metrics. 

An important feature of the eigenvalue bounds in Theorem~\ref{t1} is their compatibility with the asymptotic law
$$
\lambda_k(g)\sim C_\ell\cdot(k/\mathit{Vol}_g(M))^{1/(\ell+1)}\qquad\text{as~}k\to +\infty,
$$
where $C_\ell$ is a constant that depends on the dimension of $M$ only, see~\cite{Po}. In view of this asymptotic behaviour it is extremely interesting to understand {\em up to what extent the constant in inequality~\eqref{gk:eb} can be made independent of geometry and whether one can obtain similar inequalities with explicit geometric quanities on the right-hand side.} In the present paper we obtain rather satisfactory results in these directions.

\subsection{Main results}
The principal purpose of the paper is to obtain sub-Laplacian eigenvalue bounds with an explicit dependance (or independence) of constants on geometry. We study sub-Laplacian eigenvalue problems on regular sub-Riemannian manifolds $(M,H,g)$, where $H$ is a sub-bundle of $TM$ satisfying the H\"ormander condition and $g$ is a metric on it; see Sect.~\ref{prem}-\ref{laplace} for necessary background material. In this setting the notion of an intrinsic  sub-Laplacian in general is not unique and is closely related to natural volume measures on $M$, such as the Popp measure and the Hausdorff measure. For the rest of this section we restrict our discussion to an intrinsic sub-Laplacian $(-\Delta_b)$ corresponding to the Popp measure $\mathcal{P}_g$ on a regular sub-Riemannian manifold. For contact metric manifolds this measure coincides with the volume measure of an associated Riemannian metric, and the corresponding sub-Laplacian is the standard sub-Laplacian studied in contact and CR geometry, see~\cite{DT}. In the sequel by the Hausdorff dimension of a sub-Riemannian manifold we mean the Hausdorff dimension of the Carnot-Caratheodory metric space; for regular sub-Riemannian manifolds it depends only on the algebraic properties of the distribution $H\subset TM$.

For the sake of simplicity we consider below eigenvalue problems on compact manifolds; we assume that $M$ is either closed or is a compact subdomain of a complete manifold. More general results in terms of the counting function on not necessarily compact manifolds can be found in Sect.~\ref{main}. Our  main theorem involves two {\em conformal invariants}. The first one is the function $x\mapsto\hat{\mathcal P}_x(\hat{B}_x)$ whose value at $x$ is the volume of a unit ball in the nilpotent approximation at a point $x\in M$. This invariant is unseen in Riemannian geometry, where  nilpotent approximations are isometric to the Euclidean space. On sub-Riemannian manifolds  nilpotent approximations at different points are not, in general, isometric. The second invariant is the so-called {\em conformal minimal volume} $\mathit{Min}\mathcal P[g]$, that is the infimum of volumes $\mathcal P_g(M)$ over conformal metrics satisfying certain metric covering and volume growth properties. We refer to Sect.~\ref{prem} and~\ref{main} for precise definitions and properties of these invariants. 
\begin{theorem}
\label{t2}
Let $(M,H,g_0)$ be a compact regular sub-Riemannian manifold, possibly with boundary. Then there exist positive constants $C_0$ and $C_1$ depending only on the Hausdorff dimension $Q$ of $M$ such that for any sub-Riemannian metric $g$ conformal to $g_0$ the Neumann eigenvalues $\lambda_k(g)$ of an intrinsic sub-Laplacian $(-\Delta_b)$ satisfy the inequalities
$$
\lambda_k(g)\mathcal{P}_g(M)^{2/Q}\leqslant C_0\cdot(\mathit{Min}\mathcal P[g_0])^{2/Q}+C_1\cdot(\max_M\hat{\mathcal P}_x(\hat{B}_x)^{2/Q})k^{2/Q}
$$
for any $k=1,2\ldots,$ where $\mathit{Min}{\mathcal P}[g_0]$ is the conformal minimal volume, and $\hat{\mathcal P}_x(\hat{B}_x)$ is the Popp volume of a unit ball in the nilpotent approximation at $x\in M$.
\end{theorem}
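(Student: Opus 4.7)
\medskip
\noindent\textbf{Proof proposal for Theorem \ref{t2}.}
The plan is to follow the classical Korevaar-type strategy adapted to the sub-Riemannian Carnot--Carath\'eodory setting, and in particular to produce, for each $k$, a family of $k+1$ disjointly supported test functions whose Rayleigh quotients are uniformly controlled by the two terms on the right-hand side. By the variational (min--max) characterisation of the Neumann eigenvalues,
$$
\lambda_k(g)\;\leqslant\;\max_{u\in V_{k+1}\setminus\{0\}}\frac{\int_M|\nabla_H u|_g^2\,d\mathcal P_g}{\int_M u^2\,d\mathcal P_g},
$$
taken over any $(k+1)$-dimensional subspace $V_{k+1}$ of the appropriate Sobolev space. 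Thus the problem reduces to constructing $V_{k+1}$ with a suitable upper bound on the sup-Rayleigh quotient, and exploiting the fact that under a conformal change of $g$ the horizontal gradient and the Popp volume transform in a controlled way that renders the combination on the left-hand side conformally sensitive in a manageable manner.

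The construction of test functions proceeds by a metric decomposition argument in the CC-metric space $(M,d_g)$, in the spirit of Grigor'yan--Netrusov--Yau and its Riemannian adaptations. First I would pass to a conformal metric $g'\in[g_0]$ that approximately realises $\mathit{Min}\mathcal P[g_0]$, so that the relevant covering / volume growth hypotheses needed for the decomposition lemma hold with constants depending only on $Q$. I would then apply an abstract covering lemma producing a family of $k$ CC-balls (or annuli) $B(x_i,r_i)$ with the required pairwise disjointness of dilated copies, and such that, for each $i$, either $\mathcal P_{g'}(B(x_i,r_i))\gtrsim \mathcal P_{g'}(M)/k$ (used for small $k$), or the radius $r_i$ is small enough that the nilpotent approximation at $x_i$ governs the local geometry. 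The split between the two regimes is precisely what produces the two-term structure of the bound.

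On each ball of the family I would construct a non-negative cut-off test function $u_i$ supported in a slightly dilated ball. The Rayleigh quotient of $u_i$ is then estimated by a local Poincar\'e-type inequality on small CC-balls. For the small-scale pieces, the key input is the fact that as $r\to 0$ the CC-ball $B(x,r)$ in $(M,g)$, together with the rescaled Popp volume, converges to the unit ball $\hat B_x$ in the nilpotent approximation equipped with its own Popp measure $\hat{\mathcal P}_x$, so that $\mathcal P_g(B(x,r))= \hat{\mathcal P}_x(\hat B_x)\,r^Q(1+o(1))$; combined with a standard plateau cut-off of logarithmic type this yields $\int|\nabla_H u_i|_g^2\,d\mathcal P_g \lesssim r_i^{Q-2}\hat{\mathcal P}_{x_i}(\hat B_{x_i})$ and $\int u_i^2\,d\mathcal P_g\gtrsim r_i^Q\hat{\mathcal P}_{x_i}(\hat B_{x_i})$, so the Rayleigh quotient is $\lesssim r_i^{-2}$. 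Using the conformal invariance of the product $\lambda_k(g)\mathcal P_g(M)^{2/Q}$-type quantity (which is what makes sub-Riemannian Korevaar-type estimates work), one converts these local Rayleigh quotient bounds back to the original metric $g$, picking up the factor $\max_M\hat{\mathcal P}_x(\hat B_x)^{2/Q}$ on the $k^{2/Q}$-term and the factor $(\mathit{Min}\mathcal P[g_0])^{2/Q}$ on the constant term.

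The main obstacle I expect is the careful treatment of the transition between the ``large-scale'' and ``small-scale'' regimes in a way that keeps both terms of the right shape and does not hide geometric dependence inside the constants. In the Riemannian case the nilpotent approximation is simply Euclidean and so the factor $\hat{\mathcal P}_x(\hat B_x)$ is absent; here one must verify (i) that the decomposition/covering lemma can be run in purely metric-measure terms on a regular sub-Riemannian manifold with constants depending only on $Q$, (ii) that the local volume and Poincar\'e constants on small CC-balls are captured quantitatively by the nilpotent approximation and are uniform in $x\in M$ up to the stated maximum, and (iii) that conformal changes preserve the hypotheses of the covering lemma once one works with the conformal minimal volume. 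Once these three points are in place, the inequality follows by the min--max inequality applied to the span of $u_1,\ldots,u_{k+1}$.
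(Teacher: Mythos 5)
Your overall architecture --- a Grigor'yan--Netrusov--Yau/Hassannezhad-type metric decomposition producing $k$ disjointly supported cut-offs, fed into the variational principle, with the two-term bound arising from a dichotomy in the decomposition and with a normalised conformal representative nearly realising $\mathit{Min}\mathcal P[g_0]$ --- is indeed the skeleton of the paper's argument. (The paper phrases it as a lower bound for the counting function, Theorem~\ref{main:t1}, of which Theorem~\ref{t2} is a one-line corollary; that is only a repackaging of min--max.) However, the mechanism you propose for estimating the Rayleigh quotients contains a genuine gap.

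You bound $\int u_i^2\,d\mathcal P_g\gtrsim r_i^Q\hat{\mathcal P}_{x_i}(\hat{B}_{x_i})$ and $\int\abs{\nabla_b u_i}^2\,d\mathcal P_g\lesssim r_i^{Q-2}\hat{\mathcal P}_{x_i}(\hat{B}_{x_i})$ using the small-ball asymptotics $\mathcal P_g(B(x,r))=\hat{\mathcal P}_x(\hat{B}_x)r^Q(1+o(1))$ and a local Poincar\'e inequality \emph{for the metric $g$}. But $g$ is an arbitrary conformal metric: its conformal factor is uncontrolled, so neither two-sided volume comparison on small $d_g$-balls (uniform over the conformal class) nor a Poincar\'e inequality with dimensional constants is available for $g$; and the quantity $\lambda_k(g)\mathcal P_g(M)^{2/Q}$ is only scale-invariant, not conformally invariant --- if it were, no $\mathit{Min}\mathcal P$ term would be needed --- so one cannot ``convert back to $g$'' this way. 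The paper's resolution is structurally different: the decomposition is performed in the \emph{fixed background} Carnot--Carath\'eodory metric $d_{g_0}$ (with $g_0$ locally normalised) but with the measure $\mu=\mathcal P_g$ of the conformal metric, so the denominator bound $\int u_i^2\,d\mathcal P_g\geqslant\mu(A_i)\geqslant c\,\mathcal P_g(M)/k$ comes straight from the decomposition lemma, with no volume asymptotics for $g$ at all. For the numerator one uses the genuinely conformally invariant energy $\int\abs{\nabla_b u}^Q\,d\mathcal P_g=\int\abs{\nabla_b u}^Q\,d\mathcal P_{g_0}$ (a consequence of~\eqref{conformal}), bounds it by the $d_{g_0}$-Lipschitz constant of $u_i$ times $\mathcal P_{g_0}$-volumes of $d_{g_0}$-balls of radius at most $1$ --- this is exactly where the normalisation $\mathcal P_{g_0}(B(x,r))\leqslant\alpha\hat{\mathcal P}_x(\hat{B}_x)r^Q$ injects the factor $\max_M\hat{\mathcal P}_x(\hat{B}_x)$ --- and then returns to the $L^2$-energy of $g$ via H\"older, $\int\abs{\nabla_b u_i}^2d\mathcal P_g\leqslant(\int\abs{\nabla_b u_i}^Qd\mathcal P_g)^{2/Q}\mathcal P_g(2A_i)^{1-2/Q}$. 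No Poincar\'e inequality appears anywhere. Finally, the dichotomy producing the two terms is not ``large mass vs.\ small radius'' but the one built into the decomposition theorem of~\cite{Ha11}: either annuli of outer radius at most $1$ (yielding the $k^{2/Q}$ term via the volume growth property), or precompact sets with disjoint $\rho$-neighbourhoods for a fixed $\rho=(1600)^{-1}$ (yielding a term controlled by $(\mathcal P_{g_0}(M)/\mathcal P_g(M))^{2/Q}$, hence by $\mathit{Min}\mathcal P[g_0]$ after optimising over normalised representatives). To repair your argument you would need to replace items (ii) and the ``conversion'' step by this conformal-invariance-plus-H\"older mechanism carried out in the background metric.
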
 
Mention that by results in~\cite{ABB12} the function $\hat{\mathcal P}_x(\hat{B}_x)$, where $x\in M$, is continuous, and hence, bounded on a compact manifold $M$. The eigenvalue bounds in the theorem above are asymptotically sharp, in the sense that for any metric $g$  we have $\lambda_k(g)\sim C\cdot k^{2/Q}$ as $k\to+\infty$. In Sect.~\ref{laplace} we explain how this asymptotic law for the eigenvalues of an intrinsic sub-Laplacian follows from the analysis developed by M\'etivier~\cite{Me}. Following~\cite{GK}, the general idea behind the proof of Theorem~\ref{t2} is based on using  test-functions closely related to the Carnot-Caratheodory geometry of $M$. The main technical ingredient is the  decomposition theorem for general metric measure spaces with local metric covering properties, obtained by the first-named author in~\cite{Ha11}. It is built on earlier results by Grigoryan, Netrusov, and Yau~\cite{GY99,GNY} and Colbois and Maerten~\cite{CM08}. We also essentially rely on the analysis by Agrachev, Barilari, and Boscain~\cite{ABB12} concerning the asymptotic behaviour of volumes of small Carnot-Caratheodory balls and the properties of the function $\hat{\mathcal P}_x(\hat{B}_x)$. 

Theorem~\ref{t2} has a few worth mentioning consequences. First, when $M$ is a subdomain in a Carnot group $G$ equipped with a left-invariant sub-Riemannian metric $g_0$, the conformal minimal volume vanishes, and the eigenvalue bounds above reduce to the inequalities
$$
\lambda_k(g)\mathcal{P}_g(M)^{2/Q}\leqslant C\cdot \mathcal P_{g_0}(B)^{2/Q}k^{2/Q}\qquad\text{for any }k=1,2,\ldots,
$$
where $\mathcal P_{g_0}(B)$ is the volume of a unit ball in $G$, and $C$ is a constant that depends on $Q$ only. Moreover, using results in~\cite{ABB12}, we show that for corank $1$ Carnot groups the value $\mathcal P_{g_0}(B)$ is bounded by a constant that depends on the dimension only, see Lemma~\ref{volume}. The last statement yields another corollary.
\begin{cor}
\label{coro1}
Let $(M,H,g_0)$ be a compact corank $1$ sub-Riemannian manifold, possibly with boundary. Then there exist positive constants $C_0$ and $C_1$ depending only on the Hausdorff dimension $Q=\dim M+1$ such that for any sub-Riemannian metric $g$ conformal to $g_0$ the Neumann eigenvalues $\lambda_k(g)$ of an intrinsic sub-Laplacian $(-\Delta_b)$ satisfy the inequalities
$$
\lambda_k(g)\mathcal{P}_g(M)^{2/Q}\leqslant C_0\cdot(\mathit{Min}\mathcal P[g_0])^{2/Q}+C_1\cdot k^{2/Q}
$$
for any $k=1,2\ldots,$ where $\mathit{Min}{\mathcal P}[g_0]$ is the conformal minimal volume.
\end{cor}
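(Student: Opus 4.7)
\textbf{Proof plan for Corollary~\ref{coro1}.} The strategy is to deduce the corollary directly from Theorem~\ref{t2} by absorbing the factor $\max_{x\in M}\hat{\mathcal P}_x(\hat B_x)^{2/Q}$ into a constant that depends only on the Hausdorff dimension. For a compact corank $1$ regular sub-Riemannian manifold, at every point $x\in M$ the distribution $H\subset TM$ satisfies $\dim H=\dim M-1$ and the bracket generating condition forces step $2$; consequently the nilpotent approximation $(\hat M_x,\hat H_x,\hat g_x)$ at $x$ is a corank $1$ Carnot group of step $2$, and the Hausdorff dimension at $x$ equals $\dim H+2\cdot 1=\dim M+1$. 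Regularity ensures this value is the same at every point, giving the uniform Hausdorff dimension $Q=\dim M+1$ stated in the corollary.

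The crucial input is then the pointwise uniform estimate for $\hat{\mathcal P}_x(\hat B_x)$. First I would invoke Lemma~\ref{volume} (announced in the introduction), which asserts that on any corank $1$ Carnot group the Popp volume of the Carnot--Carath\'eodory unit ball is bounded above by a constant depending only on the dimension. Since the nilpotent approximation at each $x\in M$ is itself such a group, Lemma~\ref{volume} applies uniformly in $x$ and yields
$$
\max_{x\in M}\hat{\mathcal P}_x(\hat B_x)\leqslant C(\dim M),
$$
with $C(\dim M)$ independent of the particular Lie algebra structure that $H$ produces at $x$.

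With this bound in hand, the corollary follows by substituting into the inequality of Theorem~\ref{t2}: the second summand becomes $C_1\cdot C(\dim M)^{2/Q}k^{2/Q}$, which can be rewritten as $C_1'\cdot k^{2/Q}$ with $C_1'$ depending only on $Q$, while the first summand $C_0\cdot(\mathit{Min}\mathcal P[g_0])^{2/Q}$ is unchanged. The only nontrivial step, and the main obstacle, is Lemma~\ref{volume} itself: one must bound $\hat{\mathcal P}_x(\hat B_x)$ uniformly across an a priori non-compact family of corank $1$ Carnot-group structures. This is delicate because nilpotent approximations at different points of $M$ are generally non-isometric Carnot groups; however, in the corank $1$ case the structure constants of the Lie algebra can be normalized (via a choice of orthonormal frame and the analysis in~\cite{ABB12}) to lie in a parameter space whose contribution to the unit-ball volume is controlled by the dimension alone. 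Once Lemma~\ref{volume} is established, the present corollary is immediate.
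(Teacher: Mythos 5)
Your proposal is correct and follows essentially the same route as the paper: the authors deduce Corollary~\ref{coro1} from Theorem~\ref{t2} (equivalently Corollary~\ref{c1}) by observing that the nilpotent approximation at each point of a compact corank $1$ regular sub-Riemannian manifold is a corank $1$ Carnot group, so Lemma~\ref{volume} bounds $\max_M\hat{\mathcal P}_x(\hat B_x)$ by a constant depending only on $Q$, which is then absorbed into $C_1$. You correctly identify Lemma~\ref{volume} as the only nontrivial ingredient; the paper proves it in the appendix via the explicit unit-ball volume formula of~\cite{ABB12}, exactly along the normalization-of-structure-constants lines you sketch.
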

Clearly, Corollary~\ref{coro1} covers the case when $M$ is a contact manifold and gives a sharper version of Theorem~\ref{t1}. In particular, we see that for $k\geqslant\mathit{Min}\mathcal P[g_0]$ the sub-Laplacian eigenvalues $\lambda_k(g)$ of any conformal sub-Riemannian metric $g$ satisfy the inequality
$$
\lambda_k(g)\mathcal{P}_g(M)^{2/Q}\leqslant C\cdot k^{2/Q},
$$
where $C$ is a constant that depends only on the dimension of $M$.

We proceed with specialising the considerations further to contact metric manifolds $(M,\theta,\phi,g)$. Suppose that $M$ has dimension $(2\ell+1)$ and for non-negative real numbers $t$ define the {\em volume growth function} $\alpha_{t} (g)$ as the quantity 
$$
\alpha_{t}(g)=\sup\{\mathit{Vol_g}(B(x,r))/(\omega_\ell r^{2\ell+2}): x\in M, 0<r\leqslant 1/t\},
$$ 
where $B(x,r)$ is a Carnot-Caratheodory ball, and $\omega_\ell$ is the volume of a unit ball in the Heisenberg group $\mathbb H^\ell$. As follows from known results~\cite{ABB12}, see the discussion in Sect.~\ref{prem}, the value $\alpha_t(g)$ is finite for any compact $M$. Clearly, the function $\alpha_t(g)$ is non-increasing in $t$, and, in particular, $\alpha_t(g)\leqslant\alpha_0(g)$ for any $t\geqslant 0$. In Sect.~\ref{contact} we prove the following statement.
\begin{theorem}
\label{t3}
Let $(M,\theta_0,\phi_0,g_0)$ be a compact Sasakian manifold whose Ricci curvature of a Tanaka-Webster connection is bounded below by $-1$. Then there exist positive constants $C_0$ and $C_1$ depending on the dimension of $M$ only such that for any contact metric structure $(\theta,\phi,g)$ with $\theta=e^\varphi\theta_0$ and $j(\phi)=j(\phi_0)$ the sub-Laplacian eigenvalues $\lambda_k(g)$ satisfy the inequalities
\begin{equation}
\label{new:eb}
\lambda_k(g)\mathit{Vol}_g(M)^{1/(\ell+1)}\leqslant C_0\cdot\mathit{Vol}_{g_0}(M)^{1/(\ell+1)}+C_1\cdot(\alpha_{1}(g_0)k)^{1/(\ell+1)}
\end{equation}
for any $k=1,2,\ldots$
\end{theorem}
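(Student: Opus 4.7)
The plan is to deduce Theorem~\ref{t3} from Corollary~\ref{coro1} by controlling the conformal minimal volume $\mathit{Min}\mathcal P[g_0]$ under the Sasakian hypothesis. For a contact metric manifold of dimension $2\ell+1$, the Hausdorff dimension is $Q=2\ell+2$, so $2/Q=1/(\ell+1)$, the nilpotent approximation at every point is the Heisenberg group $\mathbb H^\ell$ (so $\hat{\mathcal P}_x(\hat{B}_x)$ is a dimensional constant that has already been absorbed into the constants of Corollary~\ref{coro1}), and the Popp measure $\mathcal P_g$ is a dimensional multiple of the Riemannian volume $\mathit{Vol}_g$. Under these identifications Corollary~\ref{coro1} reads
$$
\lambda_k(g)\mathit{Vol}_g(M)^{1/(\ell+1)}\leqslant C_0'\cdot(\mathit{Min}\mathcal P[g_0])^{1/(\ell+1)}+C_1'\cdot k^{1/(\ell+1)},
$$
with $C_0',C_1'$ depending only on $\ell$, and the task is reduced to an estimate for $\mathit{Min}\mathcal P[g_0]$ in terms of $\mathit{Vol}_{g_0}(M)$ and $\alpha_1(g_0)$.

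The central geometric input is a sub-Riemannian Bishop-Gromov type comparison for Sasakian manifolds with Tanaka-Webster Ricci curvature bounded below by $-1$: Carnot-Caratheodory balls of $g_0$ satisfy \emph{(i)} a uniform local doubling property at scales $r\leqslant 1$ with purely dimensional doubling constant, and \emph{(ii)} the a priori bound $\mathit{Vol}_{g_0}(B(x,r))\leqslant\alpha_1(g_0)\omega_\ell r^{2\ell+2}$ for $r\leqslant 1$, which is essentially the definition of $\alpha_1(g_0)$. These two properties are precisely the metric-measure features entering the definition of the conformal minimal volume in Sect.~\ref{main}, so that $g_0$ itself is admissible in the infimum defining $\mathit{Min}\mathcal P[g_0]$. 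Carrying out the corresponding covering estimate at scale $r\asymp(\mathit{Vol}_{g_0}(M)/(\alpha_1(g_0)k))^{1/(2\ell+2)}$ and distinguishing the cases $r\leqslant 1$ and $r>1$ yields an upper bound on $\mathit{Min}\mathcal P[g_0]$ which, when inserted into the previous display, produces the $\mathit{Vol}_{g_0}(M)^{1/(\ell+1)}$ term from the large-scale regime and the refined $(\alpha_1(g_0)k)^{1/(\ell+1)}$ term from the small-scale regime; subadditivity of $t\mapsto t^{1/(\ell+1)}$ combines these into inequality~\eqref{new:eb}.

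The main obstacle is the Bishop-Gromov step: the sub-Riemannian volume comparison for Sasakian manifolds under a Tanaka-Webster Ricci lower bound requires second variation and Jacobi field analysis along Carnot-Caratheodory geodesics adapted to this connection, and is the only place where the Sasakian hypothesis and the curvature assumption enter. Once this input is available, the rest of the argument is a careful bookkeeping of the definitions of $\mathit{Min}\mathcal P[g_0]$ and $\alpha_t(g_0)$ from Sect.~\ref{main} together with the dimensional identifications above; the scale separation $r\leqslant 1$ versus $r>1$ in the covering argument is what forces the two distinct terms on the right-hand side of~\eqref{new:eb}.
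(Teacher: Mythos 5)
Your overall skeleton is close to the paper's: the doubling property from the Tanaka--Webster Ricci lower bound (Prop.~\ref{cdi}, proved in the cited work of Baudoin--Bonnefont--Garofalo--Munive via curvature-dimension inequalities, not Jacobi fields, but that is only a matter of which external input one invokes) gives the local covering property with a dimensional constant $N$, and the inequality $\mathit{Vol}_{g_0}(B(x,r))\leqslant\alpha_1(g_0)\,\omega_\ell\, r^{2\ell+2}$ for $r\leqslant 1$ is indeed just the definition of $\alpha_1(g_0)$; together these say $g_0$ is locally $(N,\alpha)$-normalised with $\alpha=\alpha_1(g_0)$, and then $\mathit{Min}\mathcal P[g_0]\leqslant\mathit{Vol}_{g_0}(M)$ because $g_0$ is admissible in the infimum. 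All of that matches the paper.

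The gap is in how you route the argument through Corollary~\ref{coro1} and where the factor $\alpha_1(g_0)$ ends up. Corollary~\ref{coro1} uses the conformal minimal volume taken with the \emph{universal} parameters $(N(Q),\alpha(Q))$ of Lemma~\ref{inv:prop}; since $\alpha_1(g_0)$ can exceed $\alpha(Q)$, the metric $g_0$ is \emph{not} admissible in that infimum, so your key claim that ``$g_0$ itself is admissible in the infimum defining $\mathit{Min}\mathcal P[g_0]$'' fails for the invariant appearing in Corollary~\ref{coro1}. Worse, even granting a bound on that invariant, Corollary~\ref{coro1} would yield $\lambda_k(g)\mathit{Vol}_g(M)^{1/(\ell+1)}\leqslant C_0'(\cdots)^{1/(\ell+1)}+C_1'k^{1/(\ell+1)}$ with a purely dimensional constant in front of $k^{1/(\ell+1)}$ and no slot for $\alpha_1(g_0)$; your proposed ``covering estimate at scale $r\asymp(\mathit{Vol}_{g_0}(M)/(\alpha_1(g_0)k))^{1/(2\ell+2)}$'' cannot repair this, because $\mathit{Min}\mathcal P[g_0]$ is a single number entering only the first term, not a $k$-dependent quantity that could generate the term $(\alpha_1(g_0)k)^{1/(\ell+1)}$. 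The correct route, which is the paper's, is to apply Theorem~\ref{main:t1} directly with the pair $(N,\alpha)=(N(\bar C_0),\alpha_1(g_0))$: that theorem holds for arbitrary $\alpha\geqslant 1$ with the explicit prefactor $C_1(\alpha\max\hat{\mathcal P}_x(\hat B_x))^{-1}$ in the counting-function bound, and it is precisely this explicit $\alpha$-dependence that, upon inversion, places $\alpha_1(g_0)$ inside the second term as $(\alpha_1(g_0)k)^{1/(\ell+1)}$ while keeping $C_0,C_1$ dimensional and allowing the bound $\mathit{Min}\mathcal P(M,[g_0],N,\alpha_1(g_0))\leqslant\mathit{Vol}_{g_0}(M)$ for the first term.
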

The proof of this theorem relies on a deep work~\cite{BBG,BG,BBGM}  on curvature-dimension inequalities and volume doubling properties on Sasakian manifolds with a lower Ricci curvature bound. The statement holds also for subdomains of complete Sasakian manifolds with a lower Ricci curvature bound. Note that the value $\alpha_{t}(g_0)$ does not change under replacing $\theta_0$ with $\lambda\theta_0$ and $t$ with $t/\lambda^{1/2}$ simultaneously,  where $\lambda>0$ is a real number.  By scaling the contact form it is straightforward to see that if the assumption on the Ricci curvature in Theorem~\ref{t3} is re-placed by $\mathit{Ricci}\geqslant -t^2$, then inequality~\eqref{new:eb} takes the form
$$
\lambda_k(g)\mathit{Vol}_g(M)^{1/(\ell+1)}\leqslant C_0\cdot t^2\mathit{Vol}_{g_0}(M)^{1/(\ell+1)}+C_1\cdot(\alpha_{t}(g_0)k)^{1/(\ell+1)}.
$$
In particular, if the Ricci curvature of $g_0$ is non-negative, then tending $t\to 0+$, we obtain the following statement.
\begin{cor}
\label{coro2}
Let $(M,\theta_0,\phi_0,g_0)$ be a compact Sasakian manifold whose Ricci curvature of a Tanaka-Webster connection is non-negative. Then there exists a constant $C>0$ depending on the dimension of $M$ only such that for any contact metric structure $(\theta,\phi,g)$ with $\theta=e^\varphi\theta_0$ and $j(\phi)=j(\phi_0)$ the sub-Laplacian eigenvalues $\lambda_k(g)$ satisfy the inequalities
$$
\lambda_k(g)\mathit{Vol}_g(M)^{1/(\ell+1)}\leqslant C\cdot(\alpha_0(g_0)k)^{1/(\ell+1)}
$$
for any $k=1,2,\ldots$
\end{cor}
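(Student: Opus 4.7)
The plan is to deduce this corollary from Theorem~\ref{t3} via the scaling argument sketched in the paragraph preceding the statement, together with a limit as the Ricci lower-bound parameter tends to zero. Since the heavy analytic machinery (curvature-dimension inequalities, volume doubling, test-function construction) is already encapsulated in Theorem~\ref{t3}, the task here is essentially a scaling-and-limit manipulation.

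First, I would fix a parameter $t>0$ and apply the ``re-scaled Theorem~\ref{t3}'' to the contact metric structure $(\theta_0,\phi_0,g_0)$ viewed with Ricci lower bound $-t^2$. This is legitimate because the hypothesis Ricci$\geqslant 0$ trivially implies Ricci$\geqslant -t^2$ for every $t>0$. The point of making $t$ explicit is that, as the authors remark, replacing $\theta_0$ by $\lambda\theta_0$ scales the Carnot--Caratheodory distance by $\lambda^{1/2}$ and the Tanaka--Webster Ricci tensor by $\lambda^{-1}$, so choosing $\lambda=t^2$ reduces the lower bound ``$\geqslant -t^2$'' to the normalisation ``$\geqslant -1$'' under which Theorem~\ref{t3} is stated. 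Applying inequality~\eqref{new:eb} to the rescaled metric and undoing the scaling (using the invariance of $\lambda_k\mathit{Vol}^{1/(\ell+1)}$ under rescaling of $\theta$ and the fact that $\alpha_t(g_0)$ is preserved when $\theta_0\mapsto\lambda\theta_0$ and $t\mapsto t/\lambda^{1/2}$ are done simultaneously) yields
\[
\lambda_k(g)\mathit{Vol}_g(M)^{1/(\ell+1)}\leqslant C_0\cdot t^2\mathit{Vol}_{g_0}(M)^{1/(\ell+1)}+C_1\cdot(\alpha_{t}(g_0)k)^{1/(\ell+1)}
\]
for every $t>0$ and every $k$, with constants $C_0,C_1$ depending only on $\dim M$.

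Next, I would let $t\to 0+$. The first term on the right-hand side, $C_0 t^2\mathit{Vol}_{g_0}(M)^{1/(\ell+1)}$, depends on $g_0$ but converges to zero as $t\to 0+$, and the second term satisfies $\alpha_t(g_0)\leqslant\alpha_0(g_0)$ by the monotonicity of $\alpha_\cdot(g_0)$ in $t$ (larger $1/t$ means sup over a larger range of radii). Since the left-hand side does not depend on $t$, passing to the limit produces
\[
\lambda_k(g)\mathit{Vol}_g(M)^{1/(\ell+1)}\leqslant C_1\cdot(\alpha_0(g_0)k)^{1/(\ell+1)},
\]
which is the desired conclusion with $C=C_1$.

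The main delicate point is less a ``hard step'' than a bookkeeping issue: one must check that the scaling of the contact form interacts correctly with \emph{all} the quantities appearing in inequality~\eqref{new:eb}---the sub-Laplacian eigenvalues, the associated Riemannian volume, the Tanaka--Webster Ricci curvature, and the volume-growth function $\alpha_t$---so that the dimensional constants $C_0,C_1$ emerging from Theorem~\ref{t3} can indeed be taken independent of $t$. This is precisely the invariance property highlighted by the authors in the remarks preceding the corollary, and once it is accepted the argument reduces to taking $t\to 0+$ in the displayed inequality above.
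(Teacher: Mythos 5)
Your proposal is correct and takes essentially the same route as the paper: the authors obtain Corollary~\ref{coro2} precisely by the scaling remark following Theorem~\ref{t3} (rescale $\theta_0$ by $\lambda=t^2$ to normalise the Ricci lower bound $-t^2$ to $-1$, using the invariance of $\lambda_k(g)\mathit{Vol}_g(M)^{1/(\ell+1)}$ and of $\alpha_t$ under the simultaneous rescaling) and then letting $t\to 0+$ in the resulting form of inequality~\eqref{new:eb}, with $\alpha_t(g_0)\leqslant\alpha_0(g_0)$ handling the second term. Your bookkeeping of how each quantity scales matches the paper's.
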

As another direct consequence of Theorem~\ref{t3}, we obtain the following sub-Laplacian eigenvalue bounds on Sasakian manifolds.
\begin{cor}
\label{coro3}
Let $(M,\theta,\phi,g)$ be a compact Sasakian manifold whose Ricci curvature of a Tanaka-Webster connection is bounded below by $-t^2$. Then there exist positive constants $C_0$ and $C_1$ depending on the dimension of $M$ only such that the sub-Laplacian eigenvalues $\lambda_k(g)$ satisfy the inequalities
$$
\lambda_k(g)\leqslant C_0t^2+C_1\cdot\left(\frac{\alpha_{t}(g)k}{\mathit{Vol_g(M)}}\right)^{1/(\ell+1)}
$$
for any $k=1,2,\ldots$
\end{cor}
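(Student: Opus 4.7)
The plan is to deduce Corollary~\ref{coro3} directly from Theorem~\ref{t3} by specialising it to the tautological situation in which the reference contact metric structure coincides with the target one. Concretely, given the Sasakian manifold $(M,\theta,\phi,g)$ with Tanaka--Webster Ricci bounded below by $-t^2$, I would take $(\theta_0,\phi_0,g_0):=(\theta,\phi,g)$ as the reference data and pick the conformal factor $\varphi\equiv 0$. Then the hypotheses $\theta=e^\varphi\theta_0$ and $j(\phi)=j(\phi_0)$ of Theorem~\ref{t3} are trivially satisfied, and the Tanaka--Webster Ricci lower bound required of the reference structure holds by assumption.

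Next I would invoke the scaled form of Theorem~\ref{t3} recorded in the paragraph immediately following that theorem, which upgrades the normalisation $\mathit{Ricci}\geqslant -1$ to $\mathit{Ricci}\geqslant -t^2$ and delivers
$$
\lambda_k(g)\mathit{Vol}_g(M)^{1/(\ell+1)}\leqslant C_0\cdot t^2\,\mathit{Vol}_{g_0}(M)^{1/(\ell+1)}+C_1\cdot(\alpha_{t}(g_0)k)^{1/(\ell+1)},
$$
with $C_0,C_1$ depending only on $\dim M$. Substituting $g_0=g$ and dividing both sides by $\mathit{Vol}_g(M)^{1/(\ell+1)}>0$ immediately yields the inequality asserted in Corollary~\ref{coro3}.

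There is no genuine obstacle here: once one recognises that the reference structure in Theorem~\ref{t3} may be chosen to be $(\theta,\phi,g)$ itself, the corollary is a direct renormalisation of that theorem. The only point worth checking is that the scaling argument used to pass from the normalised Ricci bound $-1$ to the general bound $-t^2$---implemented by the joint replacement $\theta\mapsto\lambda\theta$, $t\mapsto t/\lambda^{1/2}$ discussed in the paragraph preceding Corollary~\ref{coro2}---preserves the pairing $(\alpha_t(g),t^2\mathit{Vol}_g(M)^{1/(\ell+1)})$ appearing on the right-hand side; this is precisely the scale invariance already noted there, so no further verification is needed.
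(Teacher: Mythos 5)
Your proposal is correct and matches the paper's intended argument: Corollary~\ref{coro3} is indeed obtained by taking the reference structure $(\theta_0,\phi_0,g_0)=(\theta,\phi,g)$ with trivial conformal factor in the scaled form of Theorem~\ref{t3} and dividing through by $\mathit{Vol}_g(M)^{1/(\ell+1)}$. The paper itself records the corollary only as a ``direct consequence,'' and your derivation supplies exactly the steps it has in mind.
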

The last two corollaries are versions of by now classical results by Korevaar~\cite{Korv} and Buser~\cite{Bu79} respectively for Laplace eigenvalues on Riemannian manifolds. In the Riemannian case the function $\alpha_{t}(g)$ does not explicitly appear, thanks to Bishop's volume comparison theorem. Due to the recent work by Agrachev and Lee~\cite{AL} and Lee and Li~\cite{LL} on volume comparison theorems on Sasakian manifolds, the function $\alpha_{t}(g)$ in the corollaries above is bounded by a constant that depends only on the dimension of $M$  when
\begin{itemize}
\item[-] the horizontal sectional curvatures of a Tanaka-Webster connection are non-negative, 
\item[-] or the manifold $M$ has dimension $3$.
\end{itemize}
In particular, in these cases we obtain direct versions of classical results for Laplace eigenvalues in Riemannian geometry.

\subsection{Organisation of the paper}
In the first section we collect the background material on sub-Riemannian geometry. We start with recalling the H\"ormander condition for a sub-bundle $H\subset TM$, the notions of the Carnot-Caratheodory distance and the nilpotent approximation. We discuss  a few examples, and show that nilpotent approximations to contact metric manifolds are isometric to the Heisenberg group with a standard left-invariant metric. We proceed with a discussion of natural volume measures on sub-Riemannian manifolds and the related results of Agrachev, Barilari, and Boscain~\cite{ABB12}.

In Sect.~\ref{laplace} we discuss eigenvalue problems for intrinsic sub-Laplacians, which have not been addressed in the literature. The eigenvalue problems for sub-Laplacians have been mostly studied on pseudoconvex CR manifolds, see the papers by Greenleaf~\cite{Green}, also~\cite[Chap.~9]{DT}, Aribi and El Soufi~\cite{AE}, Ivanov and Vassilev~ \cite{IV12,IV14} and Kokarev~\cite{GK}. One of the purposes of the present paper is to consider sub-Laplacian eigenvalue problems on rather general sub-Riemannian manifolds. We outline the necessary analysis for this setting and include a discussion on the asymptotic behaviour of eigenvalues, based on the local analysis by M\'etivier~\cite{Me}.

The next Sect.~\ref{main} and~\ref{contact} contain the main results of the paper. The former starts with a general construction of a family of invariants, called {\em conformal minimal volume}. We explain its properties and show how to choose the specific values of parameters that yield an invariant used in Theorem~\ref{t2}. The results on eigenvalue bounds here are stated in a rather general form, making them open to possible applications. As such an application, in Sect.~\ref{contact} we show how to deduce Theorem~\ref{t3} using the recent results~\cite{BBG,BG,BBGM}  on volume doubling properties on Sasakian manifolds with a lower Ricci curvature bound. We proceed with the outline of the results by Agrachev and Lee~\cite{AL} and Lee and Li~\cite{LL} on Sasakian volume comparison theorems, and use them to obtain lower bounds for the counting function, leading to Sasakian versions of classical results in Riemannian geometry. All results apply to not necessarily compact manifolds, and stated in the form whose versions even in Riemannian geometry sometimes seem to be absent in the literature.

The last Sect.~\ref{metas} is devoted to the proofs of general theorems from Sect.~\ref{main}. The paper has an appendix, where using the results in~\cite{ABB12}, we show that the volume of a unit ball in a corank $1$ Carnot group is bounded by a constant that depends on the dimension only.

\section{Preliminaries and background material}
\label{prem}
\subsection{Sub-Riemannian manifolds}
We start with recalling basic notions of sub-Riemannian geometry; for details we refer to~\cite{Be96, Mo02}. 
Let $M$ be a connected smooth manifold, possibly with boundary, and $H$ be a smooth sub-bundle of the tangent bundle $TM$, also referred to as a distribution. Suppose that the sub-bundle  $H$ satisfies the so-called {\em H\"ormander condition}: for any point $x\in M$ and any local frame $\{X_i\}$ of $H$ around $x$ the iterated Lie brackets $[X_i,X_j]$, $[X_i,[X_j,X_k]]$, $[X_i,[\ldots[X_j,X_k]\ldots]]$ at $x$ together with the vectors $X_i(x)$ span the tangent space $T_xM$. By the length of the iterated Lie bracket above we call the number of the vector fields involved. For example, the bracket $[X_i,X_j]$ has length two. For an integer $\ell\geqslant 1$ denote by $H_x^\ell$ the subspace of the tangent space $T_xM$ spanned by all iterated Lie brackets at $x$ whose length is not greater than $\ell$; the space $H^1_x$ coincides with the fiber $H_x$ of the sub-bundle $H$. Clearly, these subspaces do not depend on a choice of a frame $\{X_i\}$. The H\"ormander condition implies that for any $x\in M$ there exists an integer $r$ such that $H_x^r=T_xM$. Thus, we obtain the {\em structure filtration} of the tangent space
\begin{equation}
\label{flag}
H_x\subset H_x^2\subset\ldots\subset H_x^r=T_xM.
\end{equation}
Following~\cite{ABGR09,Be96}, the distribution $H$  is called {\em regular}  if the dimensions $n_\ell(x)=\dim H^\ell_x$, where $\ell=1,\ldots, r$, do not depend on a point $x\in M$. The minimal integer $r$ such that $H^r=TM$ is called the {\em step} of a regular distribution. Mention that in the literature there is an ambiguity concerning this notation with some authors using the term {\em equiregular} for the regular distribution, see~\cite{ABB12}.

The {\em sub-Riemannian manifold} is a triple $(M,H,g)$, where $H$ is a sub-bundle of $TM$ that satisfies the H\"ormander condition and $g$ is a smooth metric on $H$. A sub-Riemannian manifold is called {\em regular}, if the distribution $H$ is regular. Recall that an absolutely continuous path $\gamma:[0,1]\to M$ is called {\em horizontal}, if it is tangent to $H$ almost everywhere. The metric $g$ allows to measure lengths of horizontal paths by the standard formula
$$
\mathit{Length}(\gamma)=\int_0^1\abs{\dot\gamma(t)}_gdt.
$$ 
The {\em Carnot-Caratheodory distance} between the points $x$ and $y$ on a sub-Riemannian manifold is defined as
$$
d_g(x,y)=\inf\{ \mathit{Length}(\gamma): \gamma\text{ is a horizontal path joining }x\text{ and }y\},
$$
where we assume that the infimum over the empty set is equal to infinity. By the results of Chow and Rashevskij, see~\cite{Be96,Mo02}, the H\"ormander condition implies that this metric is finite and induces the original topology on $M$.

Clearly, any Riemannian manifold can be viewed as sub-Riemannian whose distribution $H$ coincides with the tangent bundle. Regular sub-Riemannian manifolds with a non-trivial horizontal distribution $H$ occur when $n=\dim M\geqslant 3$. Below we describe two major sources of their examples.
\begin{ex}[Contact manifolds]
\label{cont:def}
Let $M$ be a manifold of odd dimension $(2\ell+1)$, where $\ell\geqslant 1$. A {\em contact manifold} is a pair $(M,\theta)$, where $\theta$ is an $1$-form such that $\theta\wedge(d\theta)^\ell$ is a volume form on $M$. It is straightforward to see that on a contact manifold there is a unique vector field $\xi$, called the {\em Reeb vector field}, such that
$$
d\theta(\xi,X)=0\quad\text{and}\quad\theta(\xi)=1,
$$
for any vector field $X$. The {\em contact form} $\theta$ defines a {\em contact distribution} $H$ as a sub-bundle whose fiber is the kernel of $\theta$,
$$
H_x=\left\{X\in T_xM: \theta(X)=0\right\}.
$$
Clearly, the distribution $H$ is regular. Besides, since the form $2$-form $d\theta$ is non-degenerate on $H$, then by the relation
$$
d\theta(X,Y)=X\cdot\theta(Y)-Y\cdot\theta(X)-\theta([X,Y])
$$
we conclude that the contact distribution satisfies the H\"ormander condition. There is a special class of sub-Riemannian metrics on the contact distribution, obtained as restrictions of Riemannian metrics on $M$ associated to a contact metric structure. More precisely, a quadruple $(M,\theta,\phi,g)$, where $\theta$ is a contact form, $\phi$ is an $(1,1)$-tensor, and $g$ is a Riemannian metric, is called the {\em contact metric manifold} if
$$
\theta(X)=g(X,\xi),\quad g(X,\phi Y)=d\theta(X,Y),\quad\text{and}\quad\phi^2(X)=-X+\theta(X)\xi,
$$
for any vector fields $X$ and $Y$ on $M$.   In particular, from the last relation we see that the restriction of $\phi$ to the contact distribution, denoted by $j(\phi)$, is an almost complex structure. The relations above also imply that 
$$
g(X,Y)=g(\phi X,\phi Y),\quad \phi(\xi)=0\quad\text{and}\quad \theta\circ\phi(X)=0,
$$
for $H$-valued vector fields $X$ and $Y$, see~\cite{Blair10}. It is well-known that any contact manifold has a contact metric structure. The basic examples occur as certain hypersurfaces of K\"ahler manifolds. The special cases include CR manifolds and Sasakian manifolds, see~\cite{Blair10,DT}.
\end{ex}

\begin{ex}[Carnot groups]
\label{carnot:def}
A simply connected nilpotent Lie group $G$ with a graded Lie algebra $\mathfrak g$, that is
$$
\mathfrak g=\oplus_{i=1}^r\mathfrak g_i,\quad \mathfrak g_{i+1}=[\mathfrak g_1,\mathfrak g_i],\quad \mathfrak g_r\ne \{0\},\quad \mathfrak g_{r+1}=\{0\},
$$
is called the {\em Carnot group} of the step $r$. Any Carnot group equipped with a scalar product $\langle\cdot,\cdot\rangle$ on $\mathfrak g_1$ becomes a left-invariant regular sub-Riemannian manifold. More precisely, the distribution $H$ and the sub-Riemannian metric on it are defined by extending $(\mathfrak g_1,\langle\cdot,\cdot\rangle)$ over $G$ by left multiplications. The grading structure on the Lie algebra in particular guarantees that the distribution $H$ satisfies the H\"ormander condition. The corresponding Carnot-Caratheodory metric is also left-invariant.

Carnot groups are often viewed as sub-Riemannian manifolds equipped with the family of dilations $D_t:G\to G$. More precisely, the grading structure on the Lie algebra induces the family $\delta_t:\mathfrak g\to\mathfrak g$, where $t>0$, of Lie algebra automorphisms:
$$
\delta_t(\sum v_i)=\sum t^iv_i,\qquad v_i\in \mathfrak g_i,
$$
which preserve the grading. The dilation $D_t:G\to G$ is defined as a unique Lie group automorphism whose differential at the identity coincides with $\delta_t$. Equivalently, it can be defined as the composition $\exp\circ\delta_t\circ\exp^{-1}$, where $\exp:\mathfrak g\to G$ is the Lie exponent, which is a global diffeomorphism when $G$ is nilpotent. It is straightforward to see that the automorphisms $D_t$ preserve the left-invariant distribution $H$ and are dilations of the Carnot-Caratheodory metric, that is
$$
d(D_tx,D_ty)=td(x,y)\qquad\text{for any }x,y\in G, t>0.
$$
\end{ex}

Carnot groups are particularly important in sub-Riemannian geometry, since they play the role of tangent spaces. More precisely, they occur as the so-called {\em nilpotent approximations} of sub-Riemannian manifolds. Below we recall this notion for regular sub-Riemannian manifolds, and refer to~\cite{Be96,Mo02} for a definition in a general case and other details.

Let $(M,H,g)$ be a regular sub-Riemannian manifold of step $r\geqslant 2$. Then the structure filtration~\eqref{flag} defines a filtration of the sub-bundles $H^i\subset H^{i+1}$, where $H^r$ coincides with the tangent bundle $TM$.
For a given point $x\in M$ we consider the nilpotization at $x$ defined as the vector space
$$
\mathit{gr}(H)_x=V^1_x\oplus V^2_x\oplus\ldots\oplus V^r_x,
$$
where $V^i_x=(H^i/ H^{i-1})_x$ is the fiber of the quotient vector bundle. By the regularity hypothesis it is straightforward to see that the Lie bracket of vector fields induces a graded Lie algebra structure on $\mathit{gr}(H)_x$, that is $[V^i_x,V^j_x]\subset V_x^{i+j}$. The {\em nilpotent approximation} $\mathit{Gr}(H)_x$ of a sub-Riemannian manifold $(M,H,g)$ at a point $x$ is the Carnot group whose Lie algebra coincides with $gr(H)_x$ and is equipped with the scalar product $g_x(\cdot,\cdot)$ on the subspace $V^1_x=H_x$. In sequel by $\hat g_x$ we denote the corresponding left-invariant metric on $\mathit{Gr}(H)_x$.

An important result by Mitchell~\cite{Mi85} says that the nilpotent approximation at a point $x$, viewed as a metric space with the Carnot-Caratheodory distance, is precisely the metric tangent cone to $(M,d_g)$ at a point $x$. Mention that the nilpotent approximations at different points are generally non-isomorphic Lie groups. However, as the following example shows, if $M$ is a contact manifold, then the nilpotent approximation at every point is precisely the Heisenberg group.
\begin{ex}[Contact manifolds: continued]
\label{cont:na}
Let $V$ be a $2\ell$-dimensional vector space equipped with a symplectic form $\omega$. Recall that the {\em Heisenberg algebra} $\mathfrak h^\ell$ is a vector space $V\oplus\mathbb R$ with a product
$$
[(v,s),(w,t)]=(0,\omega(v,w)).
$$
The corresponding Carnot group is called the {\em Heisenberg group} $\mathbb H^\ell$. It is often considered as a sub-Riemannian manifold with a left-invariant metric $h$ compatible with $\omega$ on the Lie algebra. Here by a compatible metric we mean a metric that is Euclidean in a basis that takes $\omega$ to a standard symplectic form. Let $(M,\theta)$ be a contact manifold, and $H=\ker\theta$ be a contact distribution. We claim that the nilpotization $\mathit{gr}(H)_x$ is isomorphic to the Heisenberg algebra $H_x\oplus R$ equipped with the symplectic form $\omega=-d\theta$ on $H_x$. Indeed, the Lie algebra isomorphism is identical on $H_x$ and identifies $T_xM/H_x$ with $\mathbb R$ using the contact form:
$$
\mathit{gr}(H)_x=H_x\oplus(TM/H)_x\ni X\oplus T\longmapsto X\oplus\theta(T)\in H_x\oplus\mathbb R.
$$
It indeed preserves the Lie algebra structure, since $\theta([X,Y])=-d\theta(X,Y)$ for any horizontal vector fields $X$ and $Y$, see~\cite{Mo02} for details. If $(M,\theta,\phi,g)$ is a contact metric manifold, then using relations in Example~\ref{cont:def}, one can find an orthonormal basis $(X_i,-\phi X_i)$ of the space $H_x$ such that $\omega=-d\theta$ takes the standard form. In particular, we conclude that for contact metric manifolds the nilpotent approximation $\mathit{Gr}(H)_x$ equipped with a left-invariant metric $\hat g_x$ is isometric to the Heisenberg group $(\mathbb H^\ell,h)$.
\end{ex} 

\subsection{Volume measures}
Now we discuss intrinsic volume measures on sub-Rie\-mannian manifolds; these are the Hausdorff measure corresponding to the Carnot-Caratheodory metric and the Popp measure. For Riemannian manifolds both measures coincide up to a constant with the Riemannian volume measure. However, for general sub-Riemannian manifold these measures are genuinely different and lead to different intrinsic sub-Laplacians, see~\cite{Mo02, ABGR09}.

Let $(X,d)$ be a metric space. Recall that for $s>0$ the $s$-dimensional Hausdorff measure is defined as
$$
\mathcal H^s(A):=\lim_{\delta\to 0}\mathcal H^s_\delta(A)=\sup_{\delta>0}\mathcal H^s_\delta(A),
$$
where $A\subset X$, $\delta>0$, and $\mathcal H^s_\delta$ is an approximate Hausdorff measure,
\begin{equation}
\label{hs}
\mathcal H^s_\delta(A)=\inf\left\{\sum\limits_{i=1}^\infty\diam^s(A_i): A\subset\bigcup_{i=1}^{\infty} A_i, \diam (A_i)\leqslant\delta\right\}.
\end{equation}
For properties of Hausdorff measures and other details we refer to~\cite{Matt}. In sequel, we use the so-called $s$-dimensional spherical Hausdorff measure
$$
\mathcal S^s(A):=\lim_{\delta\to 0}\mathcal S^s_\delta(A)=\sup_{\delta>0}\mathcal S^s_\delta(A),
$$
where the approximate measure $\mathcal S^s_\delta$ is defined as the infimum of quantities in~\eqref{hs} with the $A_i$'s being metric balls. It is straightforward to see that
$$
\mathcal H^s(A)\leqslant\mathcal S^s(A)\leqslant 2^s\mathcal H^s(A)
$$
for any subset $A\subset X$. In particular, the Hausdorff dimension of $X$ with respect to $\mathcal H^s$-measures, that is
$$
\inf\left\{s>0:\mathcal H^s(X)=0\right\}=\sup\left\{s>0:\mathcal H^s(X)=+\infty\right\},
$$
and with respect to $\mathcal S^s$-measures coincide.

Now let $(M,H,g)$ be a sub-Riemannian manifold. Viewing $M$, as a metric space with respect to the Carnot-Caratheodory metric, we obtain Hausdorff measures on $M$. They  are intrinsic volume measures defined on an arbitrary sub-Riemannian manifold. When $M$ is regular, by the results of Mitchell~\cite{Mi85} the Hausdorff dimension of $(M,d_g)$ is given by the formula
\begin{equation}
\label{hausdorff}
Q=\sum\limits_{i=1}^r i(\dim H^i-\dim H^{i-1}),
\end{equation}
where $H^i$ are the subspaces of the structure filtration~\eqref{flag}. As is also known, the Hausdorff measures are absolutely continuous with respect to smooth measures on $M$, that is measures defined by volume forms. One of such smooth measures, the so-called {\em Popp's measure} is another intrinsic measure defined on a regular sub-Riemannian manifold. We describe it now.

Let $0=E_0\subset E_1\subset\ldots\subset E_r=E$ be a filtration of an $n$-dimensional vector space $E$. Let $\{e_i\}$ be a basis of $E$ such that $E_\ell$ is spanned by its first $n_\ell$ vectors. Since the wedge product $e_1\wedge\ldots\wedge e_n$ depends only on the equivalence classes $[e_i]\in E_{\ell_i}/E_{\ell_{i-1}}$, where $n_{\ell{i-1}}<i\leqslant n_{\ell_i}$, we obtain an isomorphism of $1$-dimensional spaces
\begin{equation}
\label{isomo}
\wedge^nE\simeq\wedge^n(\oplus_{\ell=1}^rE_\ell/E_{\ell-1}).
\end{equation}
Now let $(M,H,g)$ be a regular sub-Riemannian manifold. First, we construct the Popp volume form in a neighbourhood of every point $x\in M$. It is defined from the natural scalar product on the  nilpotization space
$$
\mathit{gr}(H)_x=H_x\oplus (H^2/H)_x\oplus\ldots\oplus (H^r/H^{r-1})_x.
$$
More precisely, due to the H\"ormander condition the map $\otimes^iH_x\to(H^i/H^{i-1})_x$ defined using sections $\{X_i\}$ of $H$ by the formula
$$
X_1\otimes\ldots\otimes X_i\longmapsto [X_1,[X_2,[\ldots,X_i]\ldots]]+H^{i-1}
$$
is surjective, and pushes forward the natural scalar product on $\otimes^i H_x$ induced by $g_x(\cdot,\cdot)$ on $H_x$. This scalar product defines, up to a sign, a volume form on $\mathit{gr}(H)_x$, that is the element of $\wedge^n\mathit{gr}(H)_x^*$. Now using isomorphism~\eqref{isomo}, we obtain the volume form on $T_xM$.

If a manifold $M$ is orientable, then this volume form $\nu_g$ can be defined globally, and induces the measure
$$
\mathcal P_g(A)=\int_A\nu_g. 
$$
For an arbitrary regular sub-Riemannian manifold $M$ the construction above defines a density, also denoted by $\nu_g$, which yields a measure by the same formula; it is called the {\em Popp measure}. For the sequel we mention that the Popp densities of conformal metrics $g$ and $\varphi\cdot g$ are related by the formula 
\begin{equation}
\label{conformal}
\nu_{\varphi\cdot g}=\varphi^{Q/2}\cdot\nu_g,
\end{equation}
where $Q$ is the Hausdorff dimension of $M$. Indeed, on each quotient $(H^i/H^{i-1})_x$ they induce scalar products that are conformal with the factor $\varphi^i$ and the comparison of their volume forms on $\mathit{gr}(H)_x$, combined with formula~\eqref{hausdorff}, yields the statement.

\subsection{Volumes of Carnot-Caratheodory balls}
We proceed with a background material on volumes of Carnot-Caratheodory balls. First, we discuss approximation results by volumes of balls in nilpotent approximations. Recall that any volume form $\omega$ on $M$ induces a natural volume form $\hat\omega_x$ on the nilpotent approximation $\mathit{Gr}(H)_x$ at a point $x\in M$. The form $\hat\omega_x$ is the left-invariant form whose value on the Lie algebra $\mathit{gr}(H)_x$ is defined by the value $\omega_x$ via the isomorphism $\wedge^n\mathit{gr}(H)_x^*\simeq\wedge^nT_x^*M$, see~\eqref{isomo}. In particular, if $\omega$ is the Popp volume form $\nu_g$, then the induced form $(\hat\nu_g)_x$ is precisely the Popp volume form $\nu_{\hat g_x}$ on the nilpotent approximation.

In the following proposition we summarize some of the results due to~\cite{ABB12} that are important for our sequel considerations. By a {\em smooth volume measure} $\mu$ below we mean a measure induced by a smooth density.
\begin{prop}
\label{agra1}
Let $(M,H,g)$ be a regular sub-Riemannian manifold, and $\mu$ be a smooth measure on $M$. Then:
\begin{itemize}
\item[(i)] for any $x\in M$ we have
$$
\mu(B(x,\varepsilon))=\varepsilon^Q\hat\mu_x(\hat B_x)+o(\varepsilon^Q)\quad\text{as }\varepsilon\to 0,
$$
where $B(x,\varepsilon)$ is a Carnot-Caratheodory ball of radius $\varepsilon$, $\hat B_x$ is a unit ball in the nilpotent approximation $\mathit{Gr}(H)_x$, and $Q$ is the Hausdorff dimension of $M$;
\item[(ii)] the measure $\mu$ is absolutely continuous with respect to the spherical Hausdorff measure $\mathcal S^Q$ and satisfies the formula
$$
\mu(A)=\frac{1}{2^Q}\int_A\hat\mu_x(\hat B_x)d\mathcal S^Q
$$
for any measurable subset $A\subset M$;
\item[(iii)] the density function $\hat\mu_x(\hat B_x)$ is always continuous, and is $C^3$-smooth if the distribution $H$ has corank one. Moreover, if $\dim M\leqslant 5$ and $\mu$ is the Popp measure $\mathcal P_g$, then it is constant, unless $H$ has corank one in dimension $5$.
\end{itemize}
\end{prop}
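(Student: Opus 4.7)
The statement is attributed to Agrachev--Barilari--Boscain~\cite{ABB12}, so my ``proof proposal'' is really an outline of the route one would follow through their analysis; in the paper itself I would likely only cite the result, but the reasoning I would present is as follows.

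For part (i), the natural tool is the family of anisotropic dilations $\delta_\varepsilon$ attached to a system of privileged coordinates at $x$. In such coordinates the sub-Riemannian structure admits a one-parameter rescaling whose $\varepsilon\to 0$ limit is precisely the Carnot group structure $(\mathit{Gr}(H)_x,\hat g_x)$; consequently the rescaled Carnot--Caratheodory ball $\delta_{1/\varepsilon}B(x,\varepsilon)$ converges (in the pointed Gromov--Hausdorff sense) to the unit ball $\hat B_x$. A smooth measure $\mu$, written in privileged coordinates as $f\,dx_1\wedge\cdots\wedge dx_n$, pulls back under $\delta_\varepsilon$ with Jacobian equal to $\varepsilon^Q$, where $Q$ is the weight given by~\eqref{hausdorff}, and the value of $f$ at $x$ matches, via~\eqref{isomo}, the density of $\hat\mu_x$ on $\mathit{Gr}(H)_x$. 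Uniform control on the convergence of the rescaled balls then yields
\[
\mu(B(x,\varepsilon))=\varepsilon^Q\hat\mu_x(\hat B_x)+o(\varepsilon^Q).
\]

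For part (ii), given (i) one applies a Lebesgue--Besicovitch type differentiation theorem to the pair $(\mu,\mathcal S^Q)$. The Carnot--Caratheodory metric is locally doubling on a regular sub-Riemannian manifold, so such differentiation is available; it gives
\[
\frac{d\mu}{d\mathcal S^Q}(x)=\lim_{\varepsilon\to 0}\frac{\mu(B(x,\varepsilon))}{\mathcal S^Q(B(x,\varepsilon))}.
\]
Substituting the asymptotic from (i) for the numerator and using the normalisation of the spherical Hausdorff measure through coverings by balls of diameter $\le 2\varepsilon$ for the denominator produces the constant $\hat\mu_x(\hat B_x)/2^Q$, after which integration gives the claimed identity.

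For part (iii), continuity of $x\mapsto\hat\mu_x(\hat B_x)$ reduces to the continuous dependence of the graded Lie algebra $\mathit{gr}(H)_x$ and the inner product $\hat g_x$ on $x$, combined with continuity of the Carnot--Caratheodory unit ball volume as a functional on the (finite-dimensional) space of Carnot group data of fixed algebraic type; regularity of the distribution ensures this algebraic type is locally constant. For the corank one case the nilpotent approximation is always of Heisenberg type and depends on a single symmetric bilinear form, which makes the normal form of the sub-Riemannian geodesic equations depend smoothly on parameters and yields the $C^3$ regularity. Finally, the constancy statement in dimensions $\le 5$ rests on the classification of step two nilpotent Lie algebras of small dimension: in all such cases except corank one in dimension five, the graded structure admits no genuine moduli, so the unit ball volume is an isomorphism invariant and therefore locally constant. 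I expect the main obstacle to be precisely this last point: the passage from (i) to fine regularity of $\hat\mu_x(\hat B_x)$ requires careful bookkeeping of how the unit ball varies in the ``moduli space'' of nilpotent approximations, and the low-dimensional rigidity relies on the Lie algebra classification together with a symmetry argument identifying the volume across isomorphic Carnot structures.
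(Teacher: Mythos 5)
The paper does not actually prove this proposition: it is quoted from Agrachev--Barilari--Boscain~\cite{ABB12}, with only the remark that part~(i) follows from the uniform distance estimates of Bella\"iche~\cite{Be96} relating the Carnot--Caratheodory metric near $x$ to that of the nilpotent approximation --- which is exactly the dilation/convergence argument you outline. Your sketches of (ii) and (iii) are likewise consistent with the arguments in~\cite{ABB12} (modulo the minor slips that the corank-one bracket is encoded by a skew-symmetric, not symmetric, bilinear form, whose eigenvalue moduli are what limit the regularity to $C^3$, and that the nilpotent approximations in dimension at most $5$ are not all of step two), so there is nothing of substance to compare.
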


The proof of the statement in part~(i) above can be obtained from the distance estimates in~\cite[Sect.~7]{Be96}, which relate Carnot-Caratheodory metrics on $M$ near $x$ and on the nilpotent approximation $\mathit{Gr}(H)_x$. These distance estimates are uniform with respect to a point $x$, see also~\cite[Lemma~34]{ABB12}, and imply that the convergence $\varepsilon^{-Q}\mu(B(x,\varepsilon))\to\hat{\mu}_x(\hat{B}_x)$ is uniform in a neighborhood of $x$. Thus, we obtain the following statement.
\begin{cor}
\label{uni}
Under the hypotheses of Proposition~\ref{agra1}, for any compact subset $\Omega\subset M$ there exists $\varepsilon_0>0$ such that:
\begin{equation}
\label{in1}
\frac{1}{2}\hat{\mu}_x(\hat{B}_x)\varepsilon^Q\leqslant\mu(B(x,\varepsilon))\leqslant 2\hat{\mu}_x(\hat{B}_x)\varepsilon^Q\qquad\text{for any }0<\varepsilon\leqslant\varepsilon_0, ~x\in\Omega;
\end{equation}
\begin{equation}
\label{in2}
2^{Q-1}\varepsilon^Q\leqslant\mathcal{S}^Q(B(x,\varepsilon))\leqslant 2^{Q+1}\varepsilon^Q\qquad\text{for any }0<\varepsilon\leqslant\varepsilon_0, ~x\in\Omega.
\end{equation}
\end{cor}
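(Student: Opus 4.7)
The plan is to derive both \eqref{in1} and \eqref{in2} from a single uniform version of the pointwise asymptotic in Proposition~\ref{agra1}(i), namely
$$
\varepsilon^{-Q}\mu(B(x,\varepsilon))\longrightarrow\hat\mu_x(\hat B_x)\qquad\text{as }\varepsilon\to 0,
$$
with the convergence uniform in $x\in\Omega$. This uniformity is exactly the point emphasised in the paragraph preceding the corollary: it follows from Bellaïche's distance estimates in~\cite[Sect.~7]{Be96} comparing the Carnot-Caratheodory metric near $x$ with the left-invariant metric on $\mathit{Gr}(H)_x$, which are themselves uniform in $x$, see also~\cite[Lemma~34]{ABB12}.

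By Proposition~\ref{agra1}(iii) the density $x\mapsto\hat\mu_x(\hat B_x)$ is continuous and strictly positive, hence bounded below by some $m_0>0$ on the compact set $\Omega$ and uniformly continuous on a relatively compact neighbourhood $\Omega'\supset\Omega$. Given a small parameter $\eta\in(0,1/4)$ to be chosen, I would fix $\varepsilon_0>0$ so small that for every $x\in\Omega$ and every $0<\varepsilon\leqslant\varepsilon_0$ the following three conditions hold simultaneously: the ball $B(x,\varepsilon)$ lies in $\Omega'$; the density varies little, $\abs{\hat\mu_y(\hat B_y)-\hat\mu_x(\hat B_x)}\leqslant\eta\,\hat\mu_x(\hat B_x)$ for all $y\in B(x,\varepsilon)$; and the uniform convergence gives $\abs{\varepsilon^{-Q}\mu(B(x,\varepsilon))-\hat\mu_x(\hat B_x)}\leqslant\eta\,\hat\mu_x(\hat B_x)$. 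Taking $\eta<1/2$ the third condition is already equivalent to \eqref{in1}.

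For \eqref{in2}, I would invoke the integral representation from Proposition~\ref{agra1}(ii):
$$
\mu(B(x,\varepsilon))=\frac{1}{2^Q}\int_{B(x,\varepsilon)}\hat\mu_y(\hat B_y)\,d\mathcal{S}^Q(y),
$$
and bound the integrand by $(1\pm\eta)\hat\mu_x(\hat B_x)$ using the second condition. This sandwiches $\mu(B(x,\varepsilon))$ between $(1\mp\eta)2^{-Q}\hat\mu_x(\hat B_x)\mathcal{S}^Q(B(x,\varepsilon))$. Combining with the third condition and solving for $\mathcal{S}^Q(B(x,\varepsilon))$ yields
$$
\frac{(1-\eta)2^Q}{1+\eta}\varepsilon^Q\leqslant\mathcal{S}^Q(B(x,\varepsilon))\leqslant\frac{(1+\eta)2^Q}{1-\eta}\varepsilon^Q.
$$
A choice such as $\eta\leqslant 1/4$ drives both extreme factors into the interval $[1/2,2]$, which is \eqref{in2}.

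The main obstacle is nothing more than the uniformity of the convergence in Proposition~\ref{agra1}(i), since the remainder of the argument is a straightforward combination of that uniformity with the continuity of $\hat\mu_x(\hat B_x)$ and the integral representation. The uniformity itself is the content of the remarks preceding the corollary and rests on the uniform character of Bellaïche's metric tangent cone estimates, which is precisely why the positivity and continuity of $\hat\mu_x(\hat B_x)$ on a compact set are essential: they prevent the multiplicative error in $\hat\mu_x(\hat B_x)$ from degenerating.
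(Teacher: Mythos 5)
Your proposal is correct and follows essentially the same route as the paper: inequality \eqref{in1} comes from the uniform convergence $\varepsilon^{-Q}\mu(B(x,\varepsilon))\to\hat\mu_x(\hat B_x)$ supplied by the uniform distance estimates of Bella\"{\i}che and \cite[Lemma~34]{ABB12}, and inequality \eqref{in2} comes from combining this with the integral representation of Proposition~\ref{agra1}(ii) and the uniform continuity and positivity of the density on a compact set. The only cosmetic difference is that you sandwich the integrand multiplicatively by $(1\pm\eta)\hat\mu_x(\hat B_x)$ and solve for $\mathcal{S}^Q(B(x,\varepsilon))$, whereas the paper splits the integral additively into a leading term plus an error term controlled by uniform continuity; the two computations are equivalent.
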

\begin{proof}
Both relations in~\eqref{in1} are direct consequences of the uniform convergence 
$$
\varepsilon^{-Q}\mu(B(x,\varepsilon))\to\hat{\mu}_x(\hat{B}_x)\qquad\text{when }\varepsilon\to 0+,
$$ 
see the discussion above. Similarly, one can show that $(2\varepsilon)^{-Q}\mathcal S^Q(B(x,\varepsilon))$ converges uniformly to $1$ as $\varepsilon\to 0+$. In more detail, setting $f(x)$ to be $2^{-Q}\hat{\mu}_x(\hat{B}_x)$, by Prop.~\ref{agra1} we obtain
\begin{multline*}
(2\varepsilon)^{-Q}\mathcal{S}^Q(B(x,\varepsilon))=(2\varepsilon)^{-Q}\int_{B(x,\varepsilon)}f^{-1}(y)d\mu(y)\\=(2\varepsilon)^{-Q}f^{-1}(x)\mu(B(x,\varepsilon))+(2\varepsilon)^{-Q}\int_{B(x,\varepsilon)}(f^{-1}(y)-f^{-1}(x))d\mu(y).
\end{multline*}
As we know, the first term on the right hand-side above converges uniformly to $1$. The uniform convergence of the second term to zero follows from the uniform continuity of the density function $f$.
\end{proof}
In addition to part~(iii) of Prop.~\ref{agra1}, mention that by the result in~\cite{BG12}, the density function $\hat{\mu}_x(\hat{B}_x)$ is also $C^1$-smooth when a regular distribution $H$ has corank $2$ and step $2$. Taking the Popp measure $\mathcal P_g$ as a smooth measure in the proposition above, we obtain an intrinsic density function $(\hat{\mathcal P_g})_x(\hat B_x)$ on $M$, which plays an important role in the sequel. Using the family of dilations $D_t$ in the nilpotent approximation at $x\in M$, it is straightforward to see that
$$
(\hat{\mathcal P_g})_x(\hat B(x,tr))=t^Q(\hat{\mathcal P_g})_x(\hat B(x,r))\qquad\text{for any }t>0,
$$
where $\hat B(x,r)$ is a Carnot-Caratheodory ball in $\mathit{Gr}(H)_x$. Combining this relation with the scaling property of the Popp volume form $\nu_{\hat{g}_x}$, we conclude that the volume of a unit ball $(\hat{\mathcal P_g})_x(\hat B_x)$ does not change under a conformal change  of a metric $g$ on $H$. Recall that when $M$ is a contact metric manifold, the nilpotent approximations at different points are isometric, see Example~\ref{cont:na}. Consequently, the function $(\hat{\mathcal P_g})_x(\hat B_x)$ is constant on such manifolds. The following lemma says that, more generally, for corank one sub-Riemannian manifolds the function $(\hat{\mathcal P_g})_x(\hat B_x)$ is bounded.
\begin{lemma}
\label{volume}
Let $G$ be a Carnot group whose horizontal left-invariant distribution has corank $1$. Then there exists a constant $C=C(Q)$, depending on the Hausdorff dimension $Q$ of $G$ only, such that for any left-invariant metric $g$ on $G$ the Popp volume $\mathcal{P}_g(B_1)$ of a unit Carnot-Caratheodory ball is not greater than $C$.
\end{lemma}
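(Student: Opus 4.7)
The plan is to reduce the lemma to an explicit computation in canonical coordinates, exploiting the fact that a corank one Carnot group is necessarily of step two with $\dim\mathfrak g_2=1$. Indeed, setting $n=\dim G$, one has $\dim\mathfrak g_1=n-1$, and since $[\mathfrak g_1,\mathfrak g_1]\neq 0$ while the remaining layers contribute a total of only one dimension, it follows that $\mathfrak g_i=0$ for $i\geqslant 3$; in particular $Q=n+1$. Fixing a generator $Z$ of $\mathfrak g_2$, the bracket on $\mathfrak g_1$ is encoded by a non-zero skew-symmetric form $\omega$ via $[U,V]=\omega(U,V)Z$.

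For an arbitrary left-invariant metric $g$ I would then choose a $g$-orthonormal basis $X_1,Y_1,\ldots,X_k,Y_k,W_1,\ldots,W_m$ of $\mathfrak g_1$ in which $\omega$ is in canonical symplectic form with weights $\mu_1\geqslant\ldots\geqslant\mu_k>0$ and a degenerate part of dimension $m=n-1-2k$. Working in exponential coordinates $(\mathbf x,\mathbf y,\mathbf w,z)$ and using the Baker-Campbell-Hausdorff formula on a two-step nilpotent group, the left-invariant coframe dual to the horizontal basis coincides with $dx_j$, $dy_j$, $dw_j$. A Lagrange-multiplier calculation for the pushforward inner product on $\mathfrak g_2$ induced by the bracket map yields $|Z|^2_{\mathfrak g_2}=1/(2\sum_j\mu_j^2)$, and substituting into the definition of the Popp form produces the explicit expression
$$
\nu_g=\frac{1}{\sqrt{2\sum_{j=1}^k\mu_j^2}}\,d\mathbf x\,d\mathbf y\,d\mathbf w\,dz.
$$

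The next step is to enclose $B_1$ in a coordinate box. The horizontal extent is immediate from orthonormality of the frame: $B_1\subset\{|\mathbf x|,|\mathbf y|,|\mathbf w|\leqslant 1\}$. For the vertical extent, I would use a BCH computation to show that the endpoint $z_0$ of a horizontal lift of a closed planar curve satisfies $z_0=\sum_j\mu_jA_j$, where $A_j$ is the signed area enclosed in the $(x_j,y_j)$-plane. Applying the planar isoperimetric inequality $|A_j|\leqslant L_j^2/(4\pi)$ together with the Minkowski-type bound $\sum_jL_j^2\leqslant L^2$ between the planar projection lengths and the total horizontal length $L$ gives $|z_0|\leqslant\mu_1 L^2/(4\pi)$ for any horizontal path of length $L$, and hence $B_1\subset\{|z|\leqslant\mu_1/(4\pi)\}$. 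Combining this box with the expression for $\nu_g$ and using $\mu_1\leqslant\sqrt{\sum_j\mu_j^2}$ produces a bound on $\mathcal P_g(B_1)$ of the order $2^{n-1}/(4\pi\sqrt{2})$, depending only on $n=Q-1$.

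The main technical obstacle is the vertical-extent estimate, where one must reconcile the weighted area formula $z_0=\sum\mu_jA_j$ with the isoperimetric inequality so that all $\mu_j$ dependence cancels against the normalizing factor $\sqrt{2\sum\mu_j^2}$ from the Popp form; the bound must also remain valid in the degenerate case where $\omega$ has a nontrivial kernel along the $W_j$-directions and in the limit where some $\mu_j$ becomes very small. The rest of the argument reduces to routine linear algebra in canonical coordinates.
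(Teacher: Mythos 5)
Your argument is correct, but it takes a genuinely different route from the paper. The appendix proof also begins with the same normal form for the corank-one bracket (an orthonormal basis of $\mathfrak g_1$ in which $[X_i,Y_i]=-b_iZ$ and all other brackets vanish), but from there it quotes the explicit integral formula of Agrachev--Barilari--Boscain for $\mathcal P_g(B_1)$ --- an oscillatory integral in the frequencies $b_i$ obtained from the Hamiltonian description of geodesics --- normalises $\max b_i=1$ by scaling, and bounds each term of that integral by Taylor-expanding $b_is\cos(b_is)-\sin(b_is)$; the even- and odd-dimensional cases of $\mathfrak g_1$ are treated separately, and a preliminary reduction is needed to exclude Euclidean factors (which would make the prefactor $B^{-2}=\prod b_i^{-2}$ degenerate). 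You instead compute the Popp density directly from its definition (the pushforward norm $\|Z\|^2=1/(2\sum\mu_j^2)$ is correct, and matches the known corank-one formula $\nu_g=(2\sum\mu_j^2)^{-1/2}\,d\mathbf x\,d\mathbf y\,d\mathbf w\,dz$) and enclose $B_1$ in a coordinate box, controlling the vertical coordinate by the classical Dido/isoperimetric argument together with Minkowski's integral inequality $\sum_jL_j^2\leqslant L^2$; the dimension-only bound then drops out of the elementary cancellation $\mu_1\leqslant(\sum\mu_j^2)^{1/2}$. This is more self-contained (no reliance on the exact ball-volume formula), handles the degenerate directions and both parities of $\dim\mathfrak g_1$ uniformly, and makes the mechanism of the bound transparent; what it gives up is the sharper quantitative information encoded in the exact formula. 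Two small points to tidy up when writing it out: the projection of a horizontal path to the $(x_j,y_j)$-plane is not closed, so you must close it up through the origin before applying the isoperimetric inequality (the chord through the origin contributes nothing to $\tfrac12\int(x_j\,dy_j-y_j\,dx_j)$ but at most doubles the length, costing a factor $4$ in the constant), and you should note explicitly that not all $\mu_j$ can vanish, since $[\mathfrak g_1,\mathfrak g_1]=\mathfrak g_2\neq 0$ under the corank-one hypothesis.
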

The proof of Lemma~\ref{volume} is based on the explicit formula for the volume of a unit ball, obtained in~\cite{ABB12}. We collect all necessary details together with the proof of the lemma in Appendix~\ref{append}.

\section{Intrinsic sub-Laplacians and their eigenvalue problems}
\label{laplace}
\subsection{Intrinsic sub-Laplacian related to the Popp measure}
Let $(M,H,g)$ be a regular sub-Riemannian manifold of step $r$, and $\mathcal P_g$ be its Popp measure. In this section we show how the measure $\mathcal P_g$ defines an intrinsic sub-Laplacian on $M$, and discuss corresponding eigenvalue problems.

First, recall that for a smooth function $u$ on $M$ the horizontal gradient $\nabla_bu$ is defined as a unique vector field with values in the distribution $H$ such that 
$$
g(\nabla_bu,X)=du(X),\qquad\text{for any }H\text{-valued vector field }X.
$$ 
Let $\nu_g$ be a density (which locally is a smooth volume form) that defines the Popp measure $\mathcal P_g$. For any vector field $X$ on $M$ it also defines the divergence $\Div_gX$ as a function that satisfies the relation $\mathcal L_X\nu_g=\Div X\cdot\nu_g$. Following~\cite{ABGR09}, by the {\em intrinsic sub-Laplacian} $(-\Delta_b)$ we mean a second order differential operator
$$
-\Delta_bu=-\Div (\nabla_bu),\qquad\text{where~ }u\in C^\infty(M).
$$
In a local orthonormal frame $(X_i)$ of $H$ it has the following form
$$
-\Delta_b=-\sum_i X_i^2-X_0,
$$
where $X_i^2$ stand for the second Lie derivative along $X_i$, and $X_0$ is a vector field $\sum dX_j([X_i,X_j])X_i$, see~\cite{ABGR09}. Note that the vector field $X_0$ vanishes on unimodular Lie groups. Equivalently, the intrinsic sub-Laplacian can be defined as the H\"ormander operator
$$
-\Delta_b=\sum_i X_i^*X_i,
$$
where $(X_i)$ is again a local orthonormal frame of $H$, and the $X_i^*$'s are the adjoint operators with respect to the natural $L_2$-scalar product based on the Popp measure. We refer to~\cite{ABGR09} for further details and examples.

Using either local form of $\Delta_b$, by H\"ormander's theorem~\cite{Ho76} we conclude that the intrinsic sub-Laplacian is a  {\em sub-elliptic} operator. This means that for any compact subdomain $\Omega\subset M$ there exists positive constants $C$ and $\varepsilon$ such that
\begin{equation}
\label{subell}
{\vert\!\!\vert u\vert\!\!\vert}_{\varepsilon}^2\leqslant C\left(\abs{\langle\Delta_bu,u\rangle}+{{\vert\!\!\vert}{u}{\vert\!\!\vert}}^2\right)
\end{equation}
for any $C^\infty$-smooth function $u$ on $\Omega$ that is smooth up to the boundary, where ${\vert\!\!\vert\,\cdot\,\vert\!\!\vert}=\langle\cdot,\cdot\rangle^{1/2}$ and ${\vert\!\!\vert\,\cdot\,\vert\!\!\vert}_{\varepsilon}$ stand for the $L_2$-norm and the Sobolev $\varepsilon$-norm on $\Omega$ respectively. Moreover, by the results of Rotschild and Stein~\cite{RS77} the constant $\varepsilon$ can be chosen to be $(1/r)$, where $r$ is the step of the distribution $H$. It is also worth mentioning that the operator $\Delta_b$, as well as its lower order perturbations with $C^\infty$-smooth coefficients, is {\em hypoelliptic}; that is, any solution $u$ to the equation $\Delta_bu=f$, where $f$ is a $C^\infty$-smooth function, is $C^\infty$-smooth, see~\cite{Ho76}. 

When $M$ is a compact manifold, we consider $(-\Delta_b)$ as an operator defined on $C^\infty$-smooth functions that satisfy the following version of the Neumann boundary hypothesis:
\begin{equation}
\label{neumann}
\imath_{\nabla_bu}\nu_g=0\quad\text{on }\partial M,
\end{equation}
that is, the interior product of the sub-Riemannian gradient $\nabla_b$ and the Popp density $\nu_g$ vanishes. Using the divergence formula for the vector field $(v\nabla_bu)$, we obtain a sub-Riemannian version of Green's formula
$$
\int_M (\Delta_bu)vd\mathcal{P}_g+\int_M\langle\nabla_bu,\nabla_bv\rangle d\mathcal{P}_g=
\int_{\partial M}v(\imath_{\nabla_bu}\nu_g)
$$
for smooth functions $u$ and $v$ on $M$. In particular, we see that the real operator $(-\Delta_b)$ is symmetric and  admits a self-adjoint extension to an unbounded operator of $L_2(M,\mathcal{P}_g)$, see for example~\cite[Lemma~1.2.8]{Da}. Using the compact Sobolev embedding and the sub-ellipticity of $(-\Delta_b)$, it is straightforward to conclude that its resolvent is compact. Hence, the spectrum is discrete, that is, it consists of a sequence of eigenvalues
$$
0=\lambda_1(g)<\lambda_2(g)\leqslant\lambda_3(g)\leqslant\ldots\leqslant\lambda_k(g)\leqslant\ldots
$$
of finite multiplicity such that $\lambda_k(g)\to +\infty$ as $k\to +\infty$. Its {\em counting function} $N_g(\lambda)$ is defined as the number of eigenvalues counted with multiplicity that are strictly less than $\lambda$. By Green's formula above, the collection of the eigenvalues $\lambda_k(g)$ coincides also with the spectrum of the Dirichlet form $\int\abs{\nabla_bu}^2d\mathcal{P}_g$, viewed as a form on smooth functions on $M$. By hypoellipticity the eigenfunctions of $(-\Delta_b)$ are $C^\infty$-smooth, and by~\cite[Lemma~1.2.2]{Da}, it is straightforward to conclude that the self-adjoint extension of the Neumann sub-Laplacian is unique.

When a manifold $M$ is non-compact, we view the sub-Laplacian as an operator defined on $C^\infty$-smooth functions that, in addition to the Neumann boundary hypothesis, are compactly supported. This operator also admits a self-adjoint extension, and its counting function $N_g(\lambda)$ can be defined as
$$
N_g(\lambda)=\sup\left\{\dim V: \int\abs{\nabla_b u}^2d\mathcal{P}_g<\lambda\int u^2d\mathcal{P}_g\text{ for any }u\in V\backslash\{0\}\right\},
$$
where $V$ is  a subspace formed by smooth functions. We assume that the supremum over the empty set equals zero. The eigenvalues of such an operator are precisely the jump points of the counting function,
$$
\lambda_k(g)=\inf\left\{\lambda\geqslant 0: N_g(\lambda)\geqslant k\right\},
$$
see~\cite{GNY} for details.

\begin{ex}[Contact manifolds: continued]
\label{cont:laplace}
Let $(M,\theta,\phi, g)$ be a metric contact manifold of dimension $(2\ell+1)$, see Example~\ref{cont:def}. We view it as a sub-Riemannian manifold $(M,H,\left.g\right|_H)$, where $H$ is a contact distribution and $g$ is an associated Riemannian metric on $M$. It is straightforward to show that the Popp volume form $\nu_g$ coincides with the volume form of the metric $g$, and is given by the formula
$$
\nu_g=\frac{(-1)^\ell}{\ell!}\theta\wedge (d\theta)^\ell.
$$
In particular, the sub-Laplacian $(-\Delta_b)$ coincides with a standard sub-Laplacian studied in contact and CR geometry~\cite{DT}. It is related to the Laplace-Beltrami operator $\Delta_g$ for the associated Riemannian metric $g$ by the formula
\begin{equation}
\label{sub_vs_stan}
-\Delta_g=-\Delta_b+\xi^*\xi,
\end{equation}
where $\xi$ is the Reeb field on the contact structure on $M$. Note that for metric contact manifolds the Neumann boundary condition~\eqref{neumann} is equivalent to
$$
\langle\nabla_bu,\vec{n}\rangle=0\quad\text{on~}\partial M,
$$
where $\vec{n}$ is a unit outward normal to $\partial M$, and the brackets denote the scalar product in the sense of the associated metric $g$.
\end{ex}

Mention that the notation $\Delta_b$ for the sub-Laplacian is standard in contact and CR geometry. Unlike in~\cite{ABGR09}, we have chosen to  use the same notation for the intrinsic sub-Laplacian on regular sub-Riemannian manifolds to underline the fact that it coincides with the well-known operator from Example~\ref{cont:laplace}. We end this preliminary discussion with an example describing the spectrum of $\Delta_b$ for a standard metric contact structure on an odd-dimenional sphere.
\begin{ex}[Sub-Laplacian eigenvalues of spheres]
Let $S^{2\ell+1}\subset\mathbb C^{\ell+1}$ be a unit sphere equipped with the standard metric contact structure $(\theta,\phi, g)$, where $\theta=\sum(x^jdy^j-y^jdx^j)/2$ is a primitive of the K\"ahler form on $\mathbb C^{\ell+1}$, $\phi$ is the composition of the complex structure on $\mathbb C^{\ell+1}$ with the orthogonal projection to the tangent space to $S^{2\ell+1}$ at a point under consideration, and $g$ is the restriction of the Euclidean metric on $\mathbb C^{\ell+1}$. Relation~\eqref{sub_vs_stan} can be written in the form
$$
-\Delta_g=-\Delta_b-T^2,\qquad\text{where}\quad T=i\sum_{j=1}^{n+1}\left(z^j\frac{\partial}{\partial z^j}-\bar z^j\frac{\partial}{\partial\bar z^j}\right),
$$
see~\cite[p. 277]{Sta89}. Let $V^k$ be a space of homogeneous harmonic polynomials of degree $k$ on $\mathbb C^{\ell+1}$, and $V^{p,q}\subset V^k$ be a subspace formed by polynomials that are homogeneous of degree $p$ in the $z^j$'s and of degree $q$ in the $\bar z^j$'s. It is straightforward to see that $T$ is the multiplication by $i(p-q)$ on $V^{p,q}$. Recall that the subspaces $V^k$ form a complete system of eigenspaces of the Laplace operator on $S^{2\ell+1}$, where the eigenfunctions from $V^k$ correspond to an eigenvalue $k(2\ell+k)$, see~\cite{Cha}. Thus, we conclude that the subspaces $V^{p,q}$ form a complete system of eigenfunctions of the sub-Laplacian $\Delta_b$ and each eigenfunction from $V^{p,q}$ corresponds to an eigenvalue $2\ell(p+q)+4pq$. We refer to~\cite{Sta89} for the details and further references.
\end{ex}

\subsection{Intrinsic sub-Laplacian related to the Hausdorff measure}
Let $(M,H,g)$ be a closed sub-Riemannian manifold, and $\mathcal{S}^Q$ be its spherical Hausdorff measure.
In this section we consider eigenvalue problem for the form
\begin{equation}
\label{df}
u\longmapsto\int_M\abs{\nabla_bu}^2d\mathcal{S}^Q
\end{equation}
defined on $C^\infty$-smooth functions on $M$. It is straightforward to see that this form is closable, and its spectrum can be defined as the spectrum of a {\em generator} $(-\tilde\Delta_b)$, that is, a non-negative self-adjoint operator such that
$$
\int_M(-\tilde\Delta_bu)ud\mathcal{S}^Q=\int_M\abs{\nabla_bu}^2d\mathcal{S}^Q
$$ 
for any smooth function $u$, see~\cite[Theorem~4.4.2]{Da}. Though it seems to be unknown  whether in general the spectrum of $(-\tilde\Delta_b)$ is discrete, below we show that this is the case when the distribution $H$ is regular. 

Indeed, as is known the spherical Hausdorff measure is commensurable with Popp's measure $\mathcal{P}_g$, and by Prop.~\ref{agra1}, the density function $f(x)=2^{-Q}\hat{\mathcal P}_x(\hat{B}_x)$ is continuous. In particular, it is bounded away from zero and bounded above on $M$, and by inequality~\eqref{subell}, we conclude that the operator $(-\tilde\Delta_b)$ is sub-elliptic. It is then straightforward to see that its resolvent is compact, and hence, the spectrum is discrete.

When the density $f(x)$ is smooth, the operator $\tilde\Delta_b$ becomes a second order differential operator that has a form of the ``sub-Laplacian with a drift'':
$$
f\Div(f^{-1}\nabla_bu)=\Delta_bu-\langle\nabla_b\ln f,\nabla_bu\rangle,
$$
where $\Delta_b$ is the sub-Laplacian corresponding to Popp's measure. As is explained in Example~\ref{cont:na}, the density function $f$ is constant when $M$ is a contact metric manifold, and hence, in this case the sub-Laplacians $\tilde\Delta_b$ and $\Delta_b$  coincide. The same clearly holds on Carnot groups. It is worth mentioning that the density function $f$ is invariant under the conformal change of a metric $g$ on $H$, and thus, any conformal eigenvalue bounds for one of the sub-Laplacians imply automatically eigenvalue bounds for the other one via the extremal values of the function $f$. The eigenvalue bounds that we discuss in Sect.~\ref{main} have much more delicate intrinsic nature and dependance on the values of $\hat{\mathcal P}_x(\hat{B}_x)$.

\subsection{Asymptotic behaviour of eigenvalues}
To motivate our results on eigenvalue bounds, in this section we discuss asymptotic laws for eigenvalues. Throughout the rest of the section we assume that $(M,H,g)$ is a {\em closed regular} sub-Riemannian manifold. Let $\Delta_b$ be an intrinsic sub-Laplacian corresponding to Popp's measure $\mathcal{P}_g$, and $N_g(\lambda)$ be its counting function. The following result is a consequence of the analysis developed by M\'etivier~\cite{Me}.
\begin{theorem}
\label{asym}
Let $(M,H,g)$ be a closed regular sub-Riemannian manifold. Then the counting function $N_g(\lambda)$ of an intrinsic sub-Laplacian $\Delta_b$ satisfies the following asymptotic relation: $N_g(\lambda)\sim C\cdot\lambda^{Q/2}$ as $\lambda\to +\infty$, where $Q$ is the Hausdorff dimension of $M$ and $C$ is a constant that may depend on $M$ and its sub-Riemannian structure.
\end{theorem}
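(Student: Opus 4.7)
The plan is to derive the asymptotic from M\'etivier's heat-kernel expansion in~\cite{Me} combined with Karamata's Tauberian theorem. Recall from the discussion earlier in this section that on a closed regular sub-Riemannian manifold $(-\Delta_b)$ is a non-negative, formally self-adjoint, sub-elliptic operator of H\"ormander type whose self-adjoint extension has compact resolvent. In particular, the heat semigroup $e^{t\Delta_b}$ possesses a smooth positive kernel $p_t(x,y)$ with respect to the Popp measure, and the counting function is encoded in the heat trace via
\[
\trace(e^{t\Delta_b})=\int_M p_t(x,x)\,d\mathcal{P}_g(x)=\int_0^\infty e^{-t\lambda}\,dN_g(\lambda),\qquad t>0.
\]

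Next I would invoke the main local result of~\cite{Me}: under the regularity hypothesis, for every $x\in M$ one has
\[
p_t(x,x)=t^{-Q/2}\bigl(a(x)+o(1)\bigr)\qquad\text{as }t\to 0^+,
\]
where $a(x)>0$ is the value at the group identity of the heat kernel at time $1$ of the intrinsic sub-Laplacian on the nilpotent approximation $(\mathit{Gr}(H)_x,\hat{g}_x)$ with its Popp measure. The exponent $Q/2$ is dictated by the anisotropic dilations $D_t$ of Example~\ref{carnot:def}: the Carnot sub-Laplacian is homogeneous of degree two under $D_t$, the Popp volume scales by $t^Q$, and so the tangent heat kernel satisfies $\hat{p}_t(e,e)=t^{-Q/2}\hat{p}_1(e,e)$. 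Regularity of $H$ makes the step $r$ and the dimensions $n_\ell(x)$ constant, so the exponent $Q$ in~\eqref{hausdorff} is independent of $x$.

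The final step is to integrate this pointwise expansion over $M$ and pass through a Tauberian theorem. The remainder in M\'etivier's expansion is uniform on compact subsets, a fact that rests on the continuous dependence of the nilpotent approximation on the base point --- the same phenomenon responsible for the continuity of $\hat{\mathcal P}_x(\hat{B}_x)$ in Proposition~\ref{agra1}(iii) and for the uniform ball estimates of Corollary~\ref{uni}. Setting $A=\int_M a(x)\,d\mathcal{P}_g(x)\in(0,+\infty)$, one obtains $\trace(e^{t\Delta_b})=A\cdot t^{-Q/2}+o(t^{-Q/2})$ as $t\to 0^+$. Karamata's Tauberian theorem, applied to the Laplace--Stieltjes transform displayed above, then converts this into
\[
N_g(\lambda)\sim\frac{A}{\Gamma(Q/2+1)}\,\lambda^{Q/2}\qquad\text{as }\lambda\to+\infty,
\]
which is the sought asymptotic with $C=A/\Gamma(Q/2+1)$.

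The substantive input is the local Weyl law invoked in the second paragraph. Its proof requires realising $(-\Delta_b)$ near $x$ as a perturbation of the Carnot sub-Laplacian on $\mathit{Gr}(H)_x$ in privileged coordinates adapted to the filtration~\eqref{flag}, rescaling space and time by the anisotropic dilations $D_t$ to absorb the scaling of the heat equation, and controlling the error terms uniformly in $x\in M$. Regularity of $H$ is decisive here: without it the local nilpotent structure could jump with $x$, the exponent $Q/2$ could itself become a function of the base point, and the Tauberian conclusion would no longer yield a clean power law.
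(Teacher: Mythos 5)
Your argument is correct, but it takes a genuinely different route from the paper. The paper works directly with the spectral function: it writes $N_g(\lambda)=\int_M e(\lambda;x,x)\,d\mathcal{P}_g(x)$ for the kernel of the spectral projection and invokes M\'etivier's localization result (\cite[Prop.~4.6]{Me}) to get $\lambda^{-Q/2}e(\lambda;x,x)\to\gamma(x)$ uniformly, where $\gamma$ is determined by the spectral kernel of the model H\"ormander operator on the nilpotent approximation; integration over $M$ then gives the asymptotic with no Tauberian step. You instead pass through the heat trace $\trace(e^{t\Delta_b})=\int_0^\infty e^{-t\lambda}\,dN_g(\lambda)$, use the on-diagonal small-time expansion $p_t(x,x)=t^{-Q/2}(a(x)+o(1))$ (uniform by compactness of $M$ and continuity of the nilpotent approximation in $x$), and recover the power law via Karamata. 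Both schemes localize to the same Carnot-group model and integrate a positive continuous density, and your dilation computation $\hat p_t(e,e)=t^{-Q/2}\hat p_1(e,e)$ correctly identifies the exponent. The trade-off: your route needs only the softer heat-kernel asymptotics, which are easier to establish and available from several sources, but the Tauberian inversion forfeits any remainder control; the paper's route requires M\'etivier's sharper statement about the spectral function itself but reads off the Weyl law immediately. One small caveat on attribution: \cite{Me} is formulated for the spectral function $e(\lambda;x,y)$ rather than for the heat kernel, so the expansion you quote in your second paragraph is a consequence of (or a companion to) M\'etivier's analysis rather than its literal statement; you should either derive it from the spectral-function asymptotics by an Abelian argument or cite a reference that states the heat-kernel version for equiregular sub-Laplacians.
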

\begin{proof}[Outline of the proof]
Let $E(\lambda)$ be a spectral projection in the sense of the spectral theorem, see~\cite{Kato}. As is explained by M\'etivier in~\cite[Sect.~2]{Me}, it is an integral operator in $L_2(M,\mathcal{P}_g)$ with a smooth kernel $e(\lambda;x,y)$, and the counting function $N_g(\lambda)$ satisfies the relation
$$
N_g(\lambda)=\int_Me(\lambda;x,x)d\mathcal{P}_g(x),
$$
see also~\cite{GNY}. Thus, for a proof of the theorem it is sufficient to show that there exists a strictly positive continuous function $\gamma$ on $M$ such that
\begin{equation}
\label{me:aux}
\lambda^{-Q/2}e(\lambda;x,x)\longrightarrow\gamma(x)\qquad{as~}\lambda\to +\infty
\end{equation}
uniformly on $M$. For a given $x\in M$ let $(X_i)$ be a local orthonormal frame of $H$, defined on a neighbourhood $U\subset M$ of $x$.  We may assume that  the intrinsic sub-Laplacian $\Delta_b$ on $U$ has the form of the H\"ormander operator
$$
-\Delta_b=\sum X^*_iX_i.
$$
Then the analysis in~\cite{Me} shows that the values $\lambda^{-Q/2}e(\lambda;x,x)$ for a sufficiently large $\lambda$ are determined by the spectral kernel of a certain H\"ormander operator $\sum \hat{X}^*_i\hat{X}_i$ defined on compactly supported functions on the nilpotent approximation $\mathit{Gr}(H)_x$. Here $\hat X_i$ stand for left-invariant vector fields on $\mathit{Gr}(H)_x$ defined as homogeneous degree $1$ parts of the $X_i$'s at $x\in M$, see~\cite[Sect.~3]{Me} for details. In particular, the localization argument in~\cite[Sect.~4]{Me} shows that relation~\eqref{me:aux} holds on any compact subset of $U$, see~\cite[Prop.~4.6]{Me}.
\end{proof}
Mention that similar results for other classes of hypoelliptic operators have been also obtained by Menikoff and Sj\"ostrand~\cite{MS,Sj}, and Fefferman and Phong~\cite{FP80}. The asymptotic law in Theorem~\ref{asym} can be re-written in the form
$$
N_g(\lambda)\sim C(g)\cdot\lambda^{Q/2}\mathcal{P}_g(M)\qquad\text{as~}\lambda\to +\infty,
$$
where the constant $C(g)$ is invariant under the scaling of a metric $g$ on $H$. It is extremely interesting to understand how the quantity $C(g)$ depends on  a metric. As is shown by Ponge~\cite{Po}, when $M$ is a metric contact manifold, the quantity $C(g)$ depends only on the dimension of $M$.

\section{Conformal invariants and eigenvalue bounds}
\label{main}
\subsection{Conformal invariants defined by the Popp measure}
Recall that a sub-Riemannian manifold $(M,g)$ is called {\em complete}, if $M$ does not have a boundary and  the Carnot-Caratheo\-dory space $(M,d_g)$ is complete as a metric space. This hypothesis on $M$ is always assumed throughout the rest of the paper. The purpose of this section is to introduce certain conformal invariants used to study sub-Laplacian eigenvalue problems on finite volume subdomains $\Omega\subset M$, possibly coinciding with $M$. We start with the following definition.
\begin{defn}
\label{norm:def}
Given an integer $N\geqslant 1$ and a real number $\alpha\geqslant1$ a complete regular sub-Riemannian manifold $(M,H,g)$ is called {\em locally} $(N,\alpha)$-{\em normalised} if for any $0<r\leqslant 1$
\begin{itemize}
\item[(i)] each Carnot-Caratheodory ball $B(x,r)$ of radius $r$ can be covered by $N$ balls of radius $r/2$;
\item[(ii)] $\mathcal P_g(B(x,r))\leqslant \alpha\hat{\mathcal P}_x(\hat{B}_x)r^Q$ for any $x\in M$, where $Q$ is the Hausdorff dimension of $M$, and $B(x,r)$ is a Carnot-Caratheodory ball in $M$. 
\end{itemize}
If the hypotheses~$(i)$ and~$(ii)$ hold for balls of arbitrary radius $r>0$, then the metric $g$ is called {\em globally} $(N,\alpha)$-{\em normalised}. 
\end{defn}

In the sequel we refer to the hypotheses~$(i)$ and~$(ii)$ in Definition~\ref{norm:def} as the {\em covering property} and the {\em volume growth property} respectively. Using the relations
\begin{equation}
\label{relts}
\mathcal P_{\delta^{2}g}=\delta^{Q}\mathcal P_{g}\quad\text{and}\quad B_{\delta^{2}g}(x,r)=B_g(x,\delta^{-1} r),
\end{equation}
it is straightforward to see that if a metric $g$ is locally $(N,\alpha)$-normalised, then so is the metric $\delta^2\cdot g$ for any $\delta>1$. The following lemma shows that up to a scaling any metric on a compact manifold can be made normalised for an appropriate choice of the constants $N$ and $\alpha$, depending on the Hausdorff dimension of $M$ only.
\begin{lemma}
\label{norm:c}
Let $(M,H,g)$ be a complete regular sub-Riemannian manifold of Hausdorff dimension $Q$. Then for any $N\geqslant 4^{2Q+1}$, any $\alpha\geqslant 2$, and any compact subdomain $\Omega\subset M$ there exists a real number $\delta>0$ such that for the metric $\delta^2\cdot g$ the hypotheses~$(i)$ and~$(ii)$ in Definition~\ref{norm:def} hold for any Carnot-Caratheodory ball centered in $\Omega$.
\end{lemma}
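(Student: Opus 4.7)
The plan is to use the scaling relations~\eqref{relts} to transfer both conditions for $\delta^2 g$ into volume estimates for $g$-balls of small radius, and then invoke Corollary~\ref{uni} for the Popp measure together with continuity of $x\mapsto\hat{\mathcal P}_x(\hat B_x)$. First I would fix a compact neighbourhood $\Omega'\supset\Omega$ (e.g.\ a closed $g$-tubular neighbourhood, compact by completeness) and apply Corollary~\ref{uni} with $\mu=\mathcal P_g$ on $\Omega'$ to obtain a threshold $\varepsilon_0>0$ such that
$$
\tfrac{1}{2}\hat{\mathcal P}_y(\hat B_y)\varepsilon^Q\leqslant\mathcal P_g(B_g(y,\varepsilon))\leqslant 2\hat{\mathcal P}_y(\hat B_y)\varepsilon^Q\qquad\text{for }0<\varepsilon\leqslant\varepsilon_0,~y\in\Omega'.
$$
Using continuity of $y\mapsto\hat{\mathcal P}_y(\hat B_y)$ on $\Omega'$, I would also choose $\rho>0$ so that $\hat{\mathcal P}_x(\hat B_x)\leqslant 2\hat{\mathcal P}_y(\hat B_y)$ whenever $x,y\in\Omega'$ and $d_g(x,y)<\rho$. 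I then take $\delta$ large enough that $2\delta^{-1}\leqslant\min\{\varepsilon_0,\rho,\dist_g(\Omega,M\setminus\Omega')\}$. Note that $\hat{\mathcal P}_x(\hat B_x)$ is conformally invariant (via the dilations $D_t$ on the nilpotent approximation), so it carries over from $g$ to $\delta^2 g$ without modification.

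The volume growth property~(ii) is then immediate: for $x\in\Omega$ and $0<r\leqslant 1$, \eqref{relts} and the upper estimate above give
$$
\mathcal P_{\delta^2 g}(B_{\delta^2 g}(x,r))=\delta^Q\mathcal P_g(B_g(x,\delta^{-1}r))\leqslant\delta^Q\cdot 2\hat{\mathcal P}_x(\hat B_x)(\delta^{-1}r)^Q=2\hat{\mathcal P}_x(\hat B_x)r^Q,
$$
which is bounded by $\alpha\hat{\mathcal P}_x(\hat B_x)r^Q$ since $\alpha\geqslant 2$.

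For the covering property~(i) I would run a standard packing argument. Given $x\in\Omega$ and $0<r\leqslant 1$, pick a maximal disjoint family $\{B_g(x_i,\delta^{-1}r/4)\}_{i=1}^{K}$ with centres $x_i\in B_g(x,\delta^{-1}r)$; by maximality the balls $B_g(x_i,\delta^{-1}r/2)$ cover $B_g(x,\delta^{-1}r)$, which by~\eqref{relts} produces a cover of $B_{\delta^2 g}(x,r)$ by $K$ balls of radius $r/2$ in the rescaled metric. Each packing ball is contained in $B_g(x,2\delta^{-1}r)\subset\Omega'$, so adding volumes and applying Corollary~\ref{uni} to both sides gives
$$
K\cdot\tfrac{1}{2}\min_i\hat{\mathcal P}_{x_i}(\hat B_{x_i})(\delta^{-1}r/4)^Q\leqslant\mathcal P_g(B_g(x,2\delta^{-1}r))\leqslant 2\hat{\mathcal P}_x(\hat B_x)(2\delta^{-1}r)^Q.
$$
Combining this with $\hat{\mathcal P}_x(\hat B_x)\leqslant 2\min_i\hat{\mathcal P}_{x_i}(\hat B_{x_i})$ (which holds since $d_g(x,x_i)<\delta^{-1}<\rho$), I obtain $K\leqslant 8\cdot 8^{Q}=2^{3Q+3}$, and this is dominated by $4^{2Q+1}=2^{4Q+2}$ for every $Q\geqslant 1$. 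Hence $K\leqslant N$, as required.

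The only delicate point, and the main technical step, is balancing the choice of $\delta$: it must be simultaneously large enough that the Corollary~\ref{uni} estimates apply to the packing balls and the enveloping ball, that the continuity modulus of $\hat{\mathcal P}_\cdot(\hat B_\cdot)$ reduces the pointwise ratio to at most~$2$, and that everything stays inside $\Omega'$. Once these three scales are tied together, the combinatorial count above gives the covering bound with constants $N\geqslant 4^{2Q+1}$ and $\alpha\geqslant 2$ depending only on~$Q$, independent of $\Omega$ and of the background metric.
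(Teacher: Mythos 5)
Your argument is correct and follows essentially the same route as the paper: rescale so that the two-sided volume estimates of Corollary~\ref{uni} hold at all radii up to a fixed multiple of $1$, which gives property~$(ii)$ at once, and then bound the cardinality of a maximal $r/4$-packing by a volume ratio. The only difference is cosmetic: the paper compares $\mathcal P_{\delta^2 g}(B(x_{i_0},4r))$ with $\mathcal P_{\delta^2 g}(B(x_{i_0},r/4))$ at the \emph{same} centre $x_{i_0}$, so the density $\hat{\mathcal P}_{x_{i_0}}(\hat{B}_{x_{i_0}})$ cancels and no continuity modulus for $x\mapsto\hat{\mathcal P}_x(\hat{B}_x)$ is needed, whereas your version compares volumes at different centres and therefore invokes uniform continuity of the density (at the price of a harmless extra factor of $2$); your explicit enlargement of $\Omega$ to a compact neighbourhood $\Omega'$ is a careful touch that the paper leaves implicit.
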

\begin{proof}
It is sufficient to prove the lemma for the values $N=4^{2Q+1}$ and $\alpha=2$. By Corollary~\ref{uni} there exists $\rho>0$ such that
\begin{equation}
\label{2sides}
\frac{1}{2}\hat{\mathcal P}_x(\hat{B}_x)r^Q\leqslant \mathcal P_g(B(x,r))\leqslant 2\hat{\mathcal P}_x(\hat{B}_x)r^Q\quad\text{for any }0<r\leqslant\rho
\end{equation}
and any $x\in\Omega$. Setting $\delta=(4/\rho)$ and using relations~\eqref{relts}, we see that the metric $\delta^2\cdot g$ satisfies the relations in~\eqref{2sides} for any $0< r\leqslant 4$. In particular, the hypothesis~$(ii)$ in Definition~\ref{norm:def} holds. Now by a rather standard argument, we show that so does the hypothesis~$(i)$.

Let $B(x,r)$ be a Carnot-Caratheodory ball, where $0<r\leqslant 1$ and $x\in\Omega$, and $\{B(x_i,r/4)\}$ be a maximal family of disjoint balls centered in $B(x,r)$. Then the family $\{B(x_i,r/2)\}$ covers $B(x,r)$, and denoting by $N_x$ its cardinality, we obtain
$$
N_x\min\mathcal P_{\delta^2g}(B(x_i,r/4))\leqslant\sum\mathcal P_{\delta^2g}(B(x_i,r/4))\leqslant\mathcal P_{\delta^2g}(B(x,5r/4)).
$$
Let $x_{i_0}$ be a point at which the minimum in the left hand side is achieved. Then we have
$$
N_x\leqslant\frac{\mathcal P_{\delta^2g}(B(x,5r/4))}{\mathcal P_{\delta^2g}(B(x_{i_0},r/4))}\leqslant\frac{\mathcal P_{\delta^2g}(B(x_{i_0},4r))}{\mathcal P_{\delta^2g}(B(x_{i_0},r/4))}\leqslant 4^{2Q+1},
$$
where in the last inequality we used the fact that for the metric $\delta^2g$ relation~\eqref{2sides} holds for  any $0<r\leqslant 4$.
\end{proof}
As a consequence of Lemma~\ref{norm:c} we see that any metric on a closed manifold can be made locally $(4^{2Q+1},2)$-normalised after rescaling. The same holds for pullback metrics on covering spaces of closed manifolds. Globally normalised metrics naturally occur as left-invariant metrics on Carnot groups and as their quotients. For the convenience of references we discuss them in the examples below.
\begin{ex}[Carnot groups: continued]
\label{carnot:glo}
Let $G$ be a Carnot group equipped with a left-invariant metric $g$, see Example~\ref{carnot:def}. Using the family of dilations $D_t:G\to G$, it is straightforward to see that the Popp measure on $G$ satisfies the following dilatation property:
\begin{equation}
\label{popp:d}
\mathcal P_g(B(x,tr))=t^{Q}\mathcal P_g(B(x,r))\qquad\text{for any }t>0,
\end{equation}
where $B(x,r)$ and $B(x,tr)$ are Carnot-Caratheodory balls, and $Q$ is the Hausdorff dimension of $G$. In particular, the volume $\mathcal P_g(B(x,r))$ equals $\hat{\mathcal P}_x(\hat{B}_x)r^Q$, and the volume growth property~$(ii)$ in Definition~\ref{norm:def} is clearly satisfied with $\alpha=1$. Setting $t=2$ in relation~\eqref{popp:d}, and following the argument in the proof of Lemma~\ref{norm:c}, we see that a left-invariant metric $g$ is globally $(4^{2Q},1)$-normalised.
\end{ex}
\begin{ex}[Quotients of Carnot groups]
\label{ex:quots}
Let $G$ be a Carnot group, and $\Gamma\subset G$ be a discrete subgroup. Recall that $\Gamma$ is called {\em cocompact} (or {\em uniform}) if the quotient $G\backslash\Gamma$ is compact. The existence of a cocompact subgroup $\Gamma$ is guaranteed whenever the structure constants of the Lie algebra of $G$ are rational in some basis, see~\cite{Rag}. We assume that $G$ is endowed with a left-invariant sub-Riemannian metric $g$. The lattice $\Gamma$ acts on $G$ freely by left-multiplications, which are sub-Riemannian isometries, and the metric $g$ descends to a metric $g_*$ on the quotient $G\backslash\Gamma$. We claim that the metric $g_*$ is globally $(4^{2Q},1)$-normalised. First, for any ball $B_{g_*}(p,r)\subset G\backslash\Gamma$ there exists a fundamental domain $D$ for the action of $\Gamma$ such that
\begin{equation}
\label{nil}
\pi^{-1}(B_{g_*}(p,r))\cap D\subset B_g(x,r)\qquad\text{for some}\quad x\in\pi^{-1}(p),
\end{equation}
where $\pi:G\to G\backslash\Gamma$ is a natural projection. More precisely, for a given $x\in\pi^{-1}(p)$ the domain $D$ can be defined as the collection of $y\in G$ such that
$$
d_g(x,y)\leqslant d_g(x,\gamma\cdot y)\qquad\text{for any }\gamma\in \Gamma.
$$
In particular, we see that any point $q\in B_{g_*}(p,r)$ has a pre-image in such a set $D$. This observation together with relation~\eqref{nil} show that the volume growth property for $g_*$ is a consequence of the one for $g$. Similarly, the covering property for $g_*$ can be deduced from the covering property for $g$ and the fact that the projection $\pi:G\to G\backslash\Gamma$ does not increase the Carnot-Caratheodory distance.
\end{ex}


Now we define a family of conformal invariants based on the notion of  the Popp measure. 
\begin{defn}
\label{inv:def}
Let $H\subset TM$ be a regular distribution and $c$ be a conformal class of metrics on it. For a given integer $N\geqslant 1$ and a real number $\alpha\geqslant 1$ by the {\em conformal $(N,\alpha)$-minimal volume} of a subdomain $\Omega\subset M$, denoted by
$$
\mathit{Min}\mathcal P(\Omega,c)=\mathit{Min}\mathcal P(\Omega,c,N,\alpha),
$$ 
we call the infimum of the Popp volumes $\mathcal P_g(\Omega)$ over all locally $(N,\alpha)$-normalised metrics $g\in c$. 
\end{defn}
In the definition above we assume that the infimum over the empty set equals  infinity.  Clearly, the conformal $(N,\alpha)$-minimal volume is monotonous in $\Omega$, that is for any $\Omega_1\subset\Omega_2$ the value $\mathit{Min}\mathcal P(\Omega_1,c)$ is not greater that $\mathit{Min}\mathcal P(\Omega_2,c)$. Besides, if $\Omega$ is compact, then it is continuous with respect to an exhaustion of $\Omega$. More precisely, for any increasing sequence of subdomains $\Omega_i\subset\Omega_{i+1}$ contained in a compact $\Omega$ we have
$$
\mathit{Min}\mathcal P(\cup\Omega_i,c)=\sup\mathit{Min}\mathcal P(\Omega_i,c)=\lim\mathit{Min}\mathcal P(\Omega_i,c).
$$
The following statement clarifies the special role of globally normalised metrics.
\begin{lemma}
\label{props}
Let $c$ be a conformal class of metrics on a regular distribution $H\subset TM$. Suppose that for a given integer $N\geqslant 1$ and a real number $\alpha\geqslant 1$ there is a globally $(N,\alpha)$-normalised metric $g\in c$.
Then the conformal $(N,\alpha)$-minimal volume 
$$
\mathit{Min}\mathcal P(\Omega,c)=\mathit{Min}\mathcal P(\Omega,c,N,\alpha)
$$ 
vanishes for any subdomain $\Omega\subset M$ whose volume with respect to $g$ is finite, $\mathcal P_g(\Omega)<+\infty$.
\end{lemma}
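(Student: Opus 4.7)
The plan is to exhibit an explicit family of metrics in the conformal class $c$ that are locally $(N,\alpha)$-normalised and whose Popp volumes on $\Omega$ tend to zero. The natural candidates are the constant rescalings $\delta^2\cdot g$ of the given globally $(N,\alpha)$-normalised metric $g$, for $\delta>0$.

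The first step is to verify the key observation that global $(N,\alpha)$-normalisation is preserved under \emph{any} constant rescaling, not just the scalings by $\delta>1$ mentioned after Definition~\ref{norm:def} (which preserve only local normalisation). Using the identities $\mathcal{P}_{\delta^2g}=\delta^Q\mathcal{P}_g$ and $B_{\delta^2g}(x,r)=B_g(x,r/\delta)$, the covering property transfers immediately: a covering of $B_g(x,r/\delta)$ by $N$ balls of $g$-radius $r/(2\delta)$ is exactly a covering of $B_{\delta^2g}(x,r)$ by $N$ balls of $\delta^2g$-radius $r/2$. For the volume growth property, one needs the conformal invariance of $\hat{\mathcal{P}}_x(\hat{B}_x)$, which is recorded in the paragraph preceding Lemma~\ref{volume}; granted this, the estimate
$$
\mathcal{P}_{\delta^2g}(B_{\delta^2g}(x,r))=\delta^Q\mathcal{P}_g(B_g(x,r/\delta))\leqslant\delta^Q\cdot\alpha\hat{\mathcal{P}}_x(\hat{B}_x)(r/\delta)^Q=\alpha\hat{\mathcal{P}}^{\delta^2g}_x(\hat{B}^{\delta^2g}_x)r^Q
$$
holds for every $r>0$. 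Therefore $\delta^2g$ is globally, and hence locally, $(N,\alpha)$-normalised for every $\delta>0$.

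The second step is to conclude. Each metric $\delta^2g$ belongs to the conformal class $c$ and is locally $(N,\alpha)$-normalised, so it is an admissible competitor in the infimum defining $\mathit{Min}\mathcal{P}(\Omega,c,N,\alpha)$. The finiteness hypothesis $\mathcal{P}_g(\Omega)<+\infty$ then yields
$$
\mathit{Min}\mathcal{P}(\Omega,c,N,\alpha)\leqslant\inf_{\delta>0}\mathcal{P}_{\delta^2g}(\Omega)=\inf_{\delta>0}\delta^Q\mathcal{P}_g(\Omega)=0,
$$
as required.

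There is no serious obstacle here; the statement is essentially a scaling argument. The only point that needs a moment of care is the conformal invariance of $\hat{\mathcal{P}}_x(\hat{B}_x)$ — which relies on the compatibility between the dilations on the nilpotent approximation $\mathit{Gr}(H)_x$ and the scaling of the left-invariant metric $\hat{g}_x$ — but this is explicitly asserted earlier in the paper and so may be used directly.
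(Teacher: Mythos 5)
Your proof is correct and follows essentially the same route as the paper: the paper's own argument simply observes that $\varepsilon^2\cdot g$ remains globally $(N,\alpha)$-normalised for every $\varepsilon>0$ by the scaling relations~\eqref{relts}, uses it as a test-metric, and lets $\varepsilon\to 0+$. You merely spell out in more detail the verification (covering property and volume growth property, the latter via the scale-invariance of $\hat{\mathcal P}_x(\hat{B}_x)$) that the paper leaves implicit.
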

\begin{proof}
Since a metric $g$ is globally $(N,\alpha)$-normalised, then by relations~\eqref{relts}  so is the metric $\varepsilon^2\cdot g$  for any $\varepsilon>0$. Using it as a test-metric,  we obtain
$$
\mathit{Min}\mathcal P(\Omega,c)\leqslant\mathcal P_{\varepsilon^2\cdot g}(\Omega)=\varepsilon^Q\mathcal P_g(\Omega).
$$ 
Passing to the limit as $\varepsilon\to 0+$, we prove the claim.
\end{proof}

Due to a number of open questions in the subject, at the moment of writing it is unclear what would be the best or universal choice of constants $N$ and $\alpha$ in Definitions~\ref{norm:def} and~\ref{inv:def}. That is the main reason for considering the family of invariants rather than fixing certain values of $N$ and $\alpha$. First, we would like $\mathit{Min}\mathcal P$ to be finite for a large class of complete sub-Riemannian manifolds. Second, it should reflect the intuition from Riemannian geometry that on "non-negatively curved"  objects the minimal conformal volume vanishes, see~\cite{Ha11}. On the other hand, we would like to be able to choose $N$ and $\alpha$ so that they would depend on the Hausdorff dimension of a manifold in question only.  The following lemma says that such a choice of the constants $N$ and $\alpha$ is possible in principle. 
\begin{lemma}
\label{inv:prop}
There exist an integer-valued function $N(Q)$ and a real-valued function $\alpha(Q)$, where $Q$ ranges over positive integers, such that the value
$$
\mathit{Min}\mathcal P(\Omega,c)=\mathit{Min}\mathcal P(\Omega,c,N(Q),\alpha(Q)),
$$
where $\Omega\subset M$ is a subdomain, $c$ is a conformal class of metrics on a regular distribution $H\subset TM$ and $Q$ is the Hausdorff dimension of $M$, satisfies the following properties:
\begin{itemize}
\item[(i)] it is finite, if $\Omega\subseteq M$ is compact;
\item[(ii)] it vanishes, if $\Omega$ is a finite volume subdomain in a Carnot group $G$ equipped with a left-invariant metric $g_0$, and $c$ is the conformal class of $g_0$;
\item[(ii)$'$] it vanishes, if $\Omega$ is a finite volume subdomain in a quotient $G\backslash\Gamma$ of a Carnot group $G$ by a discrete subgroup $\Gamma$ equipped with a metric $g_*$ obtained from the left-invariant metric on $G$, and $c$ is the conformal class of $g_*$;
\item[(iii)] it vanishes, if $\Omega=M$ is an odd-dimesional sphere $S^{2\ell+1}$, $\ell\geqslant 1$, and $c$ is the conformal class of a sub-Riemannian metric associated to a standard contact structure.
\end{itemize}
\end{lemma}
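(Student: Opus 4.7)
The plan is to fix $N(Q)=4^{2Q+1}$ and $\alpha(Q)=2$, and verify each of the four assertions in turn; $N(Q)$ and $\alpha(Q)$ may be enlarged by a dimensional factor at the end to accommodate part~(iii) if necessary.

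Property~(i) follows directly from Lemma~\ref{norm:c} applied to any reference metric $g\in c$: that lemma produces a scaling $\delta^2 g\in c$ so that the covering and volume-growth hypotheses hold for all Carnot-Caratheodory balls centred in $\Omega$. This furnishes a locally $(N(Q),\alpha(Q))$-normalised test metric whose Popp volume on the compact subdomain $\Omega$ is finite, giving the required bound $\mathit{Min}\mathcal P(\Omega,c)\leqslant \mathcal P_{\delta^2 g}(\Omega)<+\infty$.

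Properties~(ii) and~(ii)$'$ are immediate from Examples~\ref{carnot:glo} and~\ref{ex:quots}, where the natural metric is shown to be globally $(4^{2Q},1)$-normalised and hence, by obvious monotonicity of Definition~\ref{norm:def} in its parameters, also globally $(N(Q),\alpha(Q))$-normalised. Lemma~\ref{props} then yields the vanishing of $\mathit{Min}\mathcal P$ on any finite-volume subdomain.

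The main obstacle is part~(iii), where the compactness of the sphere prevents a direct appeal to Lemma~\ref{props} from a single reference metric. To handle it I would exploit the Cayley-type CR diffeomorphism $\Psi\colon S^{2\ell+1}\setminus\{p\}\to\mathbb H^\ell$, consistent with the identification of the nilpotent approximation at every point of the sphere with the Heisenberg group (Example~\ref{cont:na}). Writing $\Psi^*h=\rho^{-1}\cdot g_0$ on the punctured sphere, where $h$ is the left-invariant Heisenberg metric and $\rho$ is a positive smooth function blowing up at $p$, I would construct for each large $R>0$ and small $\eta>0$ a smooth conformal factor $\psi_{R,\eta}$ on $S^{2\ell+1}$ equal to $(\eta/R)^2\rho^{-1}$ on $\Psi^{-1}(B_R)$ and interpolating smoothly to a small positive constant outside $\Psi^{-1}(B_{R+1})$. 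The contribution of the inner region to the total Popp volume equals $(\eta/R)^Q\mathcal P_h(B_R)=O(\eta^Q)$ by~\eqref{popp:d}, while the outer and transition regions contribute $O((\eta/R)^Q)$; sending $R\to\infty$ and then $\eta\to 0$ forces $\mathcal P_{\psi_{R,\eta}g_0}(S^{2\ell+1})\to 0$. The local $(N(Q),\alpha(Q))$-normalisation is inherited from the global normalisation of $h$ on the inner region (Example~\ref{carnot:glo}) and from Lemma~\ref{norm:c} applied to $g_0$ on the remaining compact annulus, at the possible cost of enlarging $N(Q)$ and $\alpha(Q)$ by a dimensional factor. I expect the genuinely delicate step to be controlling the covering property uniformly across the transition region where the two geometric scales interface.
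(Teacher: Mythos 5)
Your treatment of parts (i), (ii) and (ii)$'$ coincides with the paper's: Lemma~\ref{norm:c} for finiteness, and Examples~\ref{carnot:glo} and~\ref{ex:quots} combined with Lemma~\ref{props} for the two vanishing statements. The problem is part (iii). Your premise that ``the compactness of the sphere prevents a direct appeal to Lemma~\ref{props} from a single reference metric'' is backwards: compactness is exactly what makes the direct appeal possible. For any metric on a \emph{compact} regular sub-Riemannian manifold the global $(N,\alpha)$-normalisation holds for some $N$ and $\alpha$: for radii up to some $\varepsilon_0$ this follows from Corollary~\ref{uni} and the packing argument of Lemma~\ref{norm:c}, while for radii beyond $\varepsilon_0$ both conditions are forced by the finiteness of the total volume and a fixed finite cover of $M$ by $(\varepsilon_0/2)$-balls. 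Applied to the standard contact metric on $S^{2\ell+1}$, the resulting constants $N_2(Q)$, $\alpha_2(Q)$ depend only on $Q=2\ell+2$, and Lemma~\ref{props} (rescaling by $\varepsilon^2$) then gives the vanishing at once. Taking $N(Q)$ and $\alpha(Q)$ to be the maxima of these values and of $4^{2Q+1}$, $2$ finishes the lemma; this is precisely the paper's proof.

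Your alternative route for (iii) via the Cayley transform has a genuine gap exactly where you flag it. The metric $\psi_{R,\eta}\cdot g_0$ is not a fixed metric rescaled, so neither Lemma~\ref{norm:c} nor relation~\eqref{relts} yields the covering and volume-growth properties for it with constants independent of $R$ and $\eta$; note in particular that~\eqref{relts} preserves local normalisation only under scaling by $\delta>1$, whereas your outer region is scaled \emph{down} by a small constant, which enlarges unit balls and can destroy the local hypotheses. Balls centred in the transition annulus see both geometric scales at once, and without a uniform doubling-type estimate there the argument does not close. Since the entire construction is needed only to avoid an appeal that is in fact available, you should replace it by the observation above.
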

\begin{proof}
Set $N_1(Q)=4^{2Q+1}$ and $\alpha_1(Q)=2$. Then by Lemma~\ref{norm:c} any conformal class $c$ on a compact manifold of Hausdorff dimension $Q$ contains a locally $(N_1(Q),\alpha_1(Q))$-normalised metric, and hence, the value $\mathit{Min}\mathcal P(M,c)$ is finite. By Lemma~\ref{props}, the property~$(ii)$ is a consequence of the fact that a left invariant metric on a Carnot group $G$ is globally $(N_1(Q),\alpha_1(Q))$-normalised, see Example~\ref{carnot:glo}. Similarly, the property~$(ii)'$ follows from the existence of a globally normalised metric on a quotient $G\backslash\Gamma$, see Example~\ref{ex:quots}. Finally, note that by Corollary~\ref{uni} for any metric on a compact regular manifold there exist $N$ and $\alpha$ such that it is globally $(N,\alpha)$-normalised. Define $N_2(Q)$ and $\alpha_2(Q)$ as such values for an odd-dimensional unit sphere $S^{2\ell+1}$ with a standard contact metric structure, where $Q=2\ell+2$. Setting $N(Q)$ and $\alpha(Q)$ to be equal to the maxima of the $N_i(Q)$'s and the $\alpha_i(Q)$'s respectively, we obtain functions that satisfy the hypotheses of the lemma.
\end{proof}


As a particular case of the property~$(ii)'$ above, we see that the conformal minimal volume of any finite volume quotient $G\backslash\Gamma$ vanishes. In addition to the properties in Lemma~\ref{inv:prop}, it is harmless to assume that for Riemannian manifolds (viewed as a partial case of sub-Riemannian ones) the value $\mathit{Min}\mathcal P(M,c)$ vanishes if the conformal class $c$ contains a metric of non-negative Ricci curvature. This observation is a consequence of the Gromov relative volume comparison theorem. 

For the convenience of references we introduce the following definition.
\begin{defn}
Let $H\subset TM$ be a regular distribution and $c$ be a conformal class of metrics on it. By the {\em conformal minimal volume} of a subdomain $\Omega\subset M$ we call the quantity
$$
\mathit{Min}\mathcal P(\Omega,c)=\mathit{Min}\mathcal P(\Omega,c,N(Q),\alpha(Q)),
$$
where $Q$ is the Hausdorff dimension of $M$, and $N(Q)$ and $\alpha(Q)$ are functions that satisfy the conclusions~$(i)-(iii)$ of Lemma~\ref{inv:prop}.
\end{defn}

As a direct consequence of the property~$(iii)$ in Lemma~\ref{inv:prop} we also have the following statement.
\begin{cor}
\label{cora:h}
Let $\mathbb H^\ell$ be a Heisenberg group equipped with a standard left-invariant sub-Riemannian metric $g_0$. Then the conformal minimal volume $\mathit{Min}\mathcal P(\mathbb H^\ell,[g_0])$ vanishes.
\end{cor}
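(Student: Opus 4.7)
The plan is to transport the vanishing statement for the standard CR sphere, already established as property~$(iii)$ of Lemma~\ref{inv:prop}, back to the Heisenberg group via the classical Cayley transform. Recall that this transform is a CR diffeomorphism
$$
\Phi:\mathbb{H}^\ell\longrightarrow S^{2\ell+1}\setminus\{p\}
$$
for some distinguished point $p\in S^{2\ell+1}$; it identifies the contact distributions and sends the standard contact form on the sphere to a positive smooth multiple of the standard Heisenberg contact form. In particular, writing $g$ for the standard sub-Riemannian metric on $S^{2\ell+1}$, the pullback $\Phi^*g$ lies in the conformal class $[g_0]$, and hence so does $\Phi^*g_\varepsilon$ for any metric $g_\varepsilon$ in the conformal class $c$ on $S^{2\ell+1}$.

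Fix $\varepsilon>0$. By Lemma~\ref{inv:prop}(iii), since $\mathit{Min}\mathcal P(S^{2\ell+1},c)=0$ we can choose a locally $(N(Q),\alpha(Q))$-normalised metric $g_\varepsilon\in c$ on $S^{2\ell+1}$ with $\mathcal P_{g_\varepsilon}(S^{2\ell+1})<\varepsilon$. Set $\tilde g_\varepsilon:=\Phi^*g_\varepsilon$. This is a metric in $[g_0]$, and because $\{p\}$ has vanishing Popp measure,
$$
\mathcal P_{\tilde g_\varepsilon}(\mathbb H^\ell)=\mathcal P_{g_\varepsilon}(S^{2\ell+1}\setminus\{p\})=\mathcal P_{g_\varepsilon}(S^{2\ell+1})<\varepsilon.
$$
It remains to check that $\tilde g_\varepsilon$ is itself locally $(N(Q),\alpha(Q))$-normalised on $\mathbb H^\ell$. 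By construction $\Phi$ is an intrinsic sub-Riemannian isometry between $(\mathbb H^\ell,\tilde g_\varepsilon)$ and $(S^{2\ell+1}\setminus\{p\},g_\varepsilon|)$; moreover, the intrinsic Carnot-Caratheodory distance on $S^{2\ell+1}\setminus\{p\}$ coincides with the restriction of the CC distance on the full sphere, since any horizontal path passing through $p$ can be perturbed off $p$ within the $2\ell$-dimensional contact distribution at arbitrarily small cost. Consequently, CC balls transfer verbatim under $\Phi$ (up to the removal of the measure-zero singleton $\{p\}$), so both the covering and the volume-growth conditions pass from $g_\varepsilon$ to $\tilde g_\varepsilon$. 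The density $\hat{\mathcal P}_x(\hat B_x)$ is conformally invariant, and since $\Phi$ identifies the nilpotent approximations of $\mathbb H^\ell$ and $S^{2\ell+1}$ pointwise (each being Heisenberg by Example~\ref{cont:na}), it takes the same value at corresponding points.

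Combining these observations, $\mathit{Min}\mathcal P(\mathbb H^\ell,[g_0])\leqslant\mathcal P_{\tilde g_\varepsilon}(\mathbb H^\ell)<\varepsilon$ for every $\varepsilon>0$, and the claim follows on letting $\varepsilon\to 0+$. The one delicate point of the argument is the comparison of Carnot-Caratheodory structures under $\Phi$, namely that excising the single point $p$ does not alter the intrinsic sub-Riemannian distance on the sphere; this is precisely where the corank-one nature of the contact distribution is used, and it is the step one should verify with some care before the rest of the proof becomes routine bookkeeping.
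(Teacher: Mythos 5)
Your argument is correct and follows essentially the same route as the paper: transport the vanishing of $\mathit{Min}\mathcal P(S^{2\ell+1},c)$ from Lemma~\ref{inv:prop}(iii) to $\mathbb H^\ell$ via the Cayley transform, yielding $\mathit{Min}\mathcal P(\mathbb H^\ell,[g_0])\leqslant\mathit{Min}\mathcal P(S^{2\ell+1},[g_0])=0$. The only difference is that you spell out the verification that the normalisation properties and Carnot-Caratheodory balls transfer under the puncture, a point the paper leaves implicit.
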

\begin{proof}
Recall that a standard left-invariant sub-Riemannian metric $g_0$ on $\mathbb H^\ell$, described in Example~\ref{cont:na}, can be viewed as a metric associated to a standard contact metric structure on $\mathbb H^\ell$. Moreover, as is known~\cite{DT}, there is a CR diffeomorphism that maps the punctured sphere $S^{2\ell+1}\backslash\{\mathit{pt}\}$ onto  the Heisenberg group $\mathbb H^\ell$. In particular, any sub-Riemannian metric on $S^{2\ell+1}$ conformal to the metric associated to a standard contact structure is pushed forward  to a sub-Riemannian metric on $\mathbb H^\ell$ conformal to $g_0$. Thus, we see that
$$
\mathit{Min}\mathcal P(\mathbb H^\ell,[g_0])\leqslant \mathit{Min}\mathcal P(\mathbb S^{2\ell+1},[g_0]),
$$
and the statement follows from Lemma~\ref{inv:prop}.
\end{proof}

The last statement leads to the following more general question.
\begin{quest}
\label{que1}
Do there exist functions $N(Q)$ and $\alpha(Q)$ of a positive integer parameter $Q$ such that for any Carnot group of Hausdorff dimension $Q$ equipped with a left-invariant sub-Riemannian metric $g$ the conformal minimal volume
$$
\mathit{Min}\mathcal P(G,[g])=\mathit{Min}\mathcal P(G,[g],N(Q),\alpha(Q))
$$
vanishes?
\end{quest}

\subsection{Eigenvalue bounds}
Now  we state our first main result. It gives a {\em parametric} lower bound for the counting function of the sub-Laplacian $(-\Delta_b)$, which applies to rather general sub-Riemannian manifolds. In the sequel by a subdomain $\Omega\subset M$ we mean a subdomain with a smooth boundary. By $\lfloor\lambda\rfloor$ we denote the floor function of $\lambda\in\mathbb R$, the greatest integer that is at most $\lambda$. The proofs of the theorems below appear in Sect.~\ref{metas}.
\begin{theorem}
\label{main:t1}
Let $(M,H,g_0)$ be a complete regular sub-Riemannian manifold. Then for any integer $N\geqslant 1$ and a real number $\alpha\geqslant 1$ there exist positive constants $C_0=C_0(N)$ and $C_1=C_1(N,Q)$ depending only on $N$ and  the Hausdorff dimension $Q$ of $M$ such that for any subdomain $\Omega\subset M$ and any sub-Riemannian metric $g$ that is conformal to $g_0$ on $\Omega$ and has finite volume $\mathcal P_g(\Omega)<+\infty$ the counting function $N_g(\Omega,\lambda)$ for the Neumann sub-Laplacian $(-\Delta_b)$ on $\Omega$ satisfies the inequality
$$
N_g(\Omega,\lambda)\geqslant \lfloor C_1(\alpha\max_\Omega\hat{\mathcal P}_x(\hat{B}_x))^{-1}(\mathcal P_g(\Omega)^{2/Q}\lambda-C_0(\mathit{Min}\mathcal P(\Omega,[g_0]))^{2/Q})^{Q/2}\rfloor
$$
for any $\lambda\geqslant C_0(\mathit{Min}\mathcal P(\Omega,[g_0])/\mathcal P_g(\Omega))^{2/Q}$, where  $\hat{\mathcal P}_x(\hat{B}_x)$ is the volume of a unit ball in  the nilpotent approximation at $x\in M$, and
$$
\mathit{Min}\mathcal P(\Omega,[g_0])=\mathit{Min}\mathcal P(\Omega,[g_0],N,\alpha)  
$$
is the conformal $(N,\alpha)$-minimal volume of $\Omega\subset M$. Besides, if a subdomain $\Omega\subset M$ is compact, then the floor function is unnecessary in the estimate above.
\end{theorem}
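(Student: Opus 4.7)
The plan is to apply the metric measure space decomposition theorem of~\cite{Ha11} to $(\Omega,d_{\tilde g},\mathcal P_g)$ for a suitable normalised representative $\tilde g$ of $[g_0]$, produce $k$ disjointly supported Lipschitz cut-off functions, and bound their Rayleigh quotients through the conformal transformation rules for the sub-Riemannian gradient and the Popp measure. The variational characterisation of $N_g(\Omega,\lambda)$ will then yield the claimed lower bound.

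First, given a small $\varepsilon>0$, I would pick a locally $(N,\alpha)$-normalised metric $\tilde g\in[g_0]$ on $\Omega$ with $\mathcal P_{\tilde g}(\Omega)\leqslant\mathit{Min}\mathcal P(\Omega,[g_0])+\varepsilon$, which is possible by the definition of the conformal minimal volume. The Carnot--Carath\'eodory space $(\Omega,d_{\tilde g})$ then enjoys the $N$-local covering property at the unit scale together with the volume bound $\mathcal P_{\tilde g}(B_{\tilde g}(x,r))\leqslant\alpha(\max_\Omega\hat{\mathcal P}_x(\hat B_x))r^Q$ for $r\leqslant 1$. Applying the decomposition theorem of~\cite{Ha11} to this metric measure space and a given integer $k$ would produce $k$ Borel pairs $(A_i,A_i^*)$ with $A_i\subset A_i^*$, the $A_i^*$ pairwise disjoint, each $A_i^*$ contained in a $\tilde g$-ball $B_{\tilde g}(x_i,2r)$ of some radius $r$ depending on $k$, with $d_{\tilde g}(A_i,\Omega\setminus A_i^*)\geqslant r$ and $\mathcal P_g(A_i)\geqslant c_0(N)\mathcal P_g(\Omega)/k$; after passing to a sub-family of size $k$ from an initial family of size $2k$ one may also assume $\mathcal P_g(A_i^*)\leqslant C'(N)\mathcal P_g(\Omega)/k$.

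As test functions I would take the Lipschitz cut-offs $u_i(x)=\min(1,d_{\tilde g}(x,\Omega\setminus A_i^*)/r)$, which are $1/r$-Lipschitz with respect to $d_{\tilde g}$ and therefore satisfy $|\nabla_b u_i|_{\tilde g}\leqslant 1/r$ almost everywhere. Writing $g=\varphi\tilde g$ on $H$ and combining the conformal identities $|\nabla_b u_i|_g^2=\varphi^{-1}|\nabla_b u_i|_{\tilde g}^2$ and $d\mathcal P_g=\varphi^{Q/2}d\mathcal P_{\tilde g}$ gives
$$
\int_\Omega|\nabla_b u_i|_g^2\,d\mathcal P_g\leqslant\frac{1}{r^2}\int_{A_i^*}\varphi^{(Q-2)/2}\,d\mathcal P_{\tilde g}.
$$
H\"older's inequality with conjugate exponents $Q/(Q-2)$ and $Q/2$ bounds the right-hand side by $r^{-2}\mathcal P_g(A_i^*)^{(Q-2)/Q}\mathcal P_{\tilde g}(A_i^*)^{2/Q}$, and the normalisation of $\tilde g$ converts $\mathcal P_{\tilde g}(A_i^*)^{2/Q}$ into a quantity bounded by $C(\alpha\max_\Omega\hat{\mathcal P}_x(\hat B_x))^{2/Q}r^2$. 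The factor $r^{-2}$ then cancels, leaving
$$
R_g(u_i)\leqslant C_1(N,Q)\,\alpha^{2/Q}(\max_\Omega\hat{\mathcal P}_x(\hat B_x))^{2/Q}(k/\mathcal P_g(\Omega))^{2/Q}.
$$

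The hard part will be the admissibility constraint $r\leqslant 1$ imposed by the local normalisation of $\tilde g$. Since the decomposition forces $r\asymp(\mathcal P_{\tilde g}(\Omega)/k)^{1/Q}$, this restriction translates into $k\gtrsim\mathcal P_{\tilde g}(\Omega)\approx\mathit{Min}\mathcal P(\Omega,[g_0])$, and combined with the Rayleigh quotient bound yields $N_g(\Omega,\lambda)\geqslant k$ whenever both constraints are met. Algebraically this produces both the threshold $\lambda\geqslant C_0(\mathit{Min}\mathcal P/\mathcal P_g(\Omega))^{2/Q}$ and the additive correction $-C_0\mathit{Min}\mathcal P^{2/Q}$ inside the $(\,\cdot\,)^{Q/2}$ of the statement. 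Sending $\varepsilon\to 0^+$ then replaces $\mathcal P_{\tilde g}(\Omega)$ by the conformal minimal volume; when $\Omega$ is compact the $k$ capacitors of the decomposition form an integer family by construction, so the floor function is unnecessary.
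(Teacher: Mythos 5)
Your overall strategy (decompose the metric measure space $(\Omega,d_{\tilde g},\mathcal P_g)$ for a normalised background metric $\tilde g$, build Lipschitz cut-offs, and exploit the conformal invariance of $\int\abs{\nabla_b u}^Q\,d\mathcal P$ via H\"older) is the same as the paper's, and your H\"older computation with exponents $Q/(Q-2)$ and $Q/2$ is equivalent to the one used there. However, there is a genuine gap in the step you yourself flag as "the hard part". You attribute to the decomposition theorem of~\cite{Ha11} a form it does not have: it does not produce sets contained in balls of a radius $r$ satisfying $r\asymp(\mathcal P_{\tilde g}(\Omega)/k)^{1/Q}$. The radii in any Grigor'yan--Netrusov--Yau type construction are governed by where the mass of the measure being decomposed sits --- here $\mathcal P_g$, not $\mathcal P_{\tilde g}$ --- and a conformal factor can concentrate all of the $\mathcal P_g$-mass in an arbitrarily small $d_{\tilde g}$-ball, so no such relation between $r$, $k$ and $\mathcal P_{\tilde g}(\Omega)$ holds. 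Consequently your derivation of the admissibility constraint "$k\gtrsim\mathit{Min}\mathcal P(\Omega,[g_0])$", and hence of both the threshold on $\lambda$ and the additive correction $-C_0(\mathit{Min}\mathcal P)^{2/Q}$, rests on a false premise.

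What Proposition~\ref{prop:loc} actually provides is a dichotomy, and the two branches are exactly what generate the two terms in the estimate. Either one gets $3k$ annuli with outer radii at most $1$ whose doubles are disjoint --- then the argument of Theorem~\ref{main:t2} applies verbatim, the radius cancels against the volume growth property of $\tilde g$, and the definition of $k$ (through $C_1$) makes the Rayleigh quotients less than $\lambda$; or one gets $3k$ precompact domains whose $\rho$-neighbourhoods are disjoint for the \emph{fixed} constant $\rho=(1600)^{-1}$. In the second branch the cut-offs satisfy $\abs{\nabla_bu_i}\leqslant 1/\rho$, and $\int\abs{\nabla_bu_i}^Q d\mathcal P_g=\int\abs{\nabla_bu_i}^Q d\mathcal P_{\tilde g}\leqslant\rho^{-Q}\mathcal P_{\tilde g}(A_i^\rho)$; one must therefore select $k$ of the $3k$ sets for which \emph{both} $\mathcal P_g(A_i^\rho)\leqslant\mathcal P_g(\Omega)/k$ and $\mathcal P_{\tilde g}(A_i^\rho)\leqslant\mathcal P_{\tilde g}(\Omega)/k$ (this is why one starts with $3k$ sets), and the resulting Rayleigh quotient bound $3\rho^{-2}C_*^{-1}(\mathcal P_{\tilde g}(\Omega)/\mathcal P_g(\Omega))^{2/Q}$ is beaten by $\lambda$ precisely because of the threshold hypothesis --- this is where $\mathit{Min}\mathcal P(\Omega,[g_0])$ enters, via an auxiliary normalised metric chosen after $\lambda$ is fixed. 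Your proposal never confronts this branch. Finally, the removal of the floor function for compact $\Omega$ is not "by construction": it comes from the extra test function provided by the nonzero constants, which are admissible for the Neumann problem and have vanishing Rayleigh quotient.
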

Choosing $N(Q)$ and $\alpha(Q)$ as in Lemma~\ref{inv:prop}, we obtain a lower bound for the counting function with the constants $C_0$ and $(C_1/\alpha)$ depending only on the Hausdorff dimension $Q$, and with the conformal minimal volume satisfying the properties~$(i)$-$(iii)$ in Lemma~\ref{inv:prop}. 
\begin{cor}
\label{c1}
Let $(M,H,g_0)$ be a complete regular sub-Riemannian manifold. Then there exist positive constants $\bar C_0=\bar C_0(Q)$ and $\bar C_1=\bar C_1(Q)$ depending only on the Hausdorff dimension $Q$ of $M$ such that for any subdomain $\Omega\subset M$ and any sub-Riemannian metric $g$ that is conformal to $g_0$ on $\Omega$ and has finite volume $\mathcal P_g(\Omega)<+\infty$ the counting function $N_g(\Omega,\lambda)$ for the Neumann sub-Laplacian $(-\Delta_b)$ on $\Omega$ satisfies the inequality
$$
N_g(\Omega,\lambda)\geqslant \lfloor\bar C_1(\max_\Omega\hat{\mathcal P}_x(\hat{B}_x))^{-1}(\mathcal P_g(\Omega)^{2/Q}\lambda-\bar C_0(\mathit{Min}\mathcal P(\Omega,[g_0]))^{2/Q})^{Q/2}\rfloor
$$
for any $\lambda\geqslant \bar C_0(\mathit{Min}\mathcal P(\Omega,[g_0])/\mathcal P_g(\Omega))^{2/Q}$, where  $\hat{\mathcal P}_x(\hat{B}_x)$ is the volume of a unit ball in  the nilpotent approximation at $x\in M$, and $\mathit{Min}\mathcal P(\Omega,[g_0])$ is the conformal minimal volume of $\Omega\subset M$. Besides, if a subdomain $\Omega\subset M$ is compact, then the floor function is unnecessary in the estimate above.
\end{cor}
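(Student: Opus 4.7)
The corollary is essentially a specialization of Theorem~\ref{main:t1} to the canonical choice of parameters provided by Lemma~\ref{inv:prop}, so the plan reduces to careful bookkeeping of how the constants depend on $N$ and $\alpha$.

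First, I would fix once and for all the functions $N(Q)$ and $\alpha(Q)$ whose existence is guaranteed by Lemma~\ref{inv:prop}. With this choice, the conformal minimal volume $\mathit{Min}\mathcal P(\Omega,[g_0])$ appearing in the corollary coincides, by the definition stated just prior to its formulation, with the conformal $(N(Q),\alpha(Q))$-minimal volume $\mathit{Min}\mathcal P(\Omega,[g_0],N(Q),\alpha(Q))$ from Definition~\ref{inv:def}. So the two quantities denoted $\mathit{Min}\mathcal P$ in the two statements match, and no further comparison is necessary.

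Next, I would apply Theorem~\ref{main:t1} to the same data $(M,H,g_0)$, $\Omega$, $g$, with $N := N(Q)$ and $\alpha := \alpha(Q)$. Since $N$ is now itself a function of $Q$, the resulting constants $C_0(N(Q))$ and $C_1(N(Q),Q)$ depend only on the Hausdorff dimension $Q$, and the inequality of Theorem~\ref{main:t1} features a factor of $\alpha(Q)^{-1}$ multiplying $(\max_\Omega\hat{\mathcal P}_x(\hat{B}_x))^{-1}$. Setting
\begin{equation*}
\bar C_0 := C_0(N(Q)), \qquad \bar C_1 := C_1(N(Q),Q)/\alpha(Q),
\end{equation*}
absorbs the $1/\alpha(Q)$ factor into $\bar C_1$ and yields exactly the inequality asserted in the corollary. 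The admissible range of $\lambda$, namely $\lambda\geqslant \bar C_0(\mathit{Min}\mathcal P(\Omega,[g_0])/\mathcal P_g(\Omega))^{2/Q}$, is just a rewriting of the corresponding hypothesis in Theorem~\ref{main:t1} with the new constant $\bar C_0$, and the remark about dropping the floor function when $\Omega$ is compact carries over verbatim.

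There is no substantive obstacle in this argument; the real content (the metric decomposition theorem of~\cite{Ha11}, the uniform asymptotics of small Carnot--Carath\'eodory balls from~\cite{ABB12} via Corollary~\ref{uni}, and the existence of universal parameters $N(Q), \alpha(Q)$) has all been isolated in Theorem~\ref{main:t1} and Lemma~\ref{inv:prop}. The only thing I would double-check is that the dependence structure of the constants $C_0$ and $C_1$ in the proof of Theorem~\ref{main:t1} is indeed $C_0 = C_0(N)$ and $C_1 = C_1(N,Q)$ (i.e., that neither depends on $\alpha$ beyond the explicit $1/\alpha$ prefactor), since this is what allows the absorption $\bar C_1 = C_1/\alpha(Q)$ to stay a function of $Q$ alone.
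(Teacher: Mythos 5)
Your proposal is correct and coincides with the paper's own derivation: the text immediately following Theorem~\ref{main:t1} obtains Corollary~\ref{c1} by exactly this specialization, choosing $N(Q)$ and $\alpha(Q)$ from Lemma~\ref{inv:prop} and absorbing the factor $\alpha(Q)^{-1}$ into the constant, i.e.\ $\bar C_0=C_0(N(Q))$ and $\bar C_1=C_1(N(Q),Q)/\alpha(Q)$. Your final caveat is also resolved by the paper: the proof of Theorem~\ref{main:t1} defines $C_0(N)=3\cdot(1600)^2/C_*$ and $C_1(N,Q)=(C_*/48)^{Q/2}$ with $C_*=C_*(N)$, so neither depends on $\alpha$ beyond the explicit prefactor.
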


In particular, if $\Omega$ is a {\em compact} subdomain in a Carnot group $G$ equipped with a left-invariant metric $g_0$, or in a quotient $G\backslash\Gamma$ by a discrete subgroup, then the conformal minimal volume vanishes, and we have
\begin{equation}
\label{est:carnot}
N_g(\Omega,\lambda)\geqslant C\cdot(\mathcal P_{g_0}(B))^{-1}\mathcal P_g(\Omega)\lambda^{Q/2}\qquad\text{for any }\lambda\geqslant 0,
\end{equation}
where $\mathcal P_{g_0}(B)$ is the volume of a unit ball in $G$, and  the constant $C$ depends only on the Hausdorff dimension $Q$ of $G$.  More generally, if for an appropriate choice of $N(Q)$ and $\alpha(Q)$ the conformal minimal volume vanishes on some Carnot group $G$, then inequality~\eqref{est:carnot} holds for {\em any} (not necessarily compact) subdomain $\Omega$ in $G$ or in a quotient $G\backslash\Gamma$. For example, this hypothesis holds for the Heisenberg group $\mathbb H^\ell$, see Corollary~\ref{cora:h}. Independently whether it holds or not in general (see Question~\ref{que1}) we have the following result that gives eigenvalue bounds of the form~\eqref{est:carnot}.
\begin{theorem}
\label{main:t2}
Let $(M,H,g_0)$ be a complete regular sub-Riemannian manifold that is globally $(N,\alpha)$-normalised for some integer $N\geqslant 1$ and real number $\alpha\geqslant 1$. Then there exists a constant $C=C(N,Q)>0$ depending on $N$ and  the Hausdorff dimension $Q$ of $M$ such that for any subdomain $\Omega\subset M$ and any sub-Riemannian metric $g$ that is conformal to $g_0$ on $\Omega$ and has finite volume $\mathcal P_g(\Omega)<+\infty$ the counting function $N_g(\Omega,\lambda)$ for the Neumann sub-Laplacian $(-\Delta_b)$ on $\Omega$ satisfies the inequality
$$
N_g(\Omega,\lambda)\geqslant \lfloor C(\alpha\max_\Omega\hat{\mathcal P}_x(\hat{B}_x))^{-1}\mathcal P_g(\Omega)\lambda^{Q/2}\rfloor
$$
for any $\lambda\geqslant 0$, where  $\hat{\mathcal P}_x(\hat{B}_x)$ is the volume of a unit ball in  the nilpotent approximation at $x\in M$. Besides, if a subdomain $\Omega\subset M$ is compact, then the floor function is unnecessary in the estimate above.
\end{theorem}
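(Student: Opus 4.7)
The statement is essentially Theorem~\ref{main:t1} in the regime where the conformal minimal volume vanishes, so my plan is to reduce Theorem~\ref{main:t2} to Theorem~\ref{main:t1} combined with Lemma~\ref{props}. The subtle point is that Lemma~\ref{props} requires the globally normalised metric to have finite volume on the subdomain in question, and $\mathcal{P}_{g_0}(\Omega)$ may well be infinite even when $\mathcal{P}_g(\Omega)$ is finite (for instance, $g$ can be a rapidly decaying conformal rescaling of an infinite-volume $g_0$). This forces an exhaustion argument rather than a direct invocation.

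My first step would be to exhaust $\Omega$ by an increasing sequence of precompact smooth subdomains $\Omega_k \Subset \Omega_{k+1}$ with $\bigcup_k \Omega_k = \Omega$. Since each $\Omega_k$ is compact and $g_0$ is smooth, we have $\mathcal{P}_{g_0}(\Omega_k) < +\infty$. A globally $(N,\alpha)$-normalised metric is in particular locally $(N,\alpha)$-normalised, so Lemma~\ref{props} applied with $g=g_0$ yields $\mathit{Min}\mathcal{P}(\Omega_k,[g_0],N,\alpha)=0$ for every $k$ (concretely, the test-metrics $\varepsilon^{2}\cdot g_{0}$ are still globally normalised and have $\mathcal{P}$-volume $\varepsilon^{Q}\mathcal{P}_{g_0}(\Omega_k)\to 0$ on each $\Omega_k$ as $\varepsilon\to 0^{+}$).

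My second step would be to apply Theorem~\ref{main:t1} on each compact $\Omega_k$ with the same $N$ and $\alpha$. Because $\mathit{Min}\mathcal{P}(\Omega_k,[g_0])=0$, its threshold $\lambda\geq C_0(\mathit{Min}\mathcal{P}/\mathcal{P}_g)^{2/Q}$ becomes $\lambda\geq 0$, the $C_0$-contribution disappears, and the ``compact $\Omega$'' clause of Theorem~\ref{main:t1} removes the floor, giving
$$
N_g(\Omega_k,\lambda)\;\geq\; C_1(N,Q)\,\bigl(\alpha\,\max_{\Omega_k}\hat{\mathcal{P}}_x(\hat{B}_x)\bigr)^{-1}\,\mathcal{P}_g(\Omega_k)\,\lambda^{Q/2}
$$
for every $\lambda\geq 0$. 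Passing to the limit $k\to\infty$ via domain monotonicity $N_g(\Omega_k,\lambda)\leq N_g(\Omega,\lambda)$ (test subspaces supported strictly inside $\Omega_k$ transplant into admissible test subspaces on $\Omega$ by extension by zero), the monotone convergence $\mathcal{P}_g(\Omega_k)\uparrow\mathcal{P}_g(\Omega)$, and the bound $\max_{\Omega_k}\hat{\mathcal{P}}_x(\hat{B}_x)\leq\sup_{\Omega}\hat{\mathcal{P}}_x(\hat{B}_x)$, the integer-valuedness of $N_g(\Omega,\lambda)$ lets us reintroduce the floor function and conclude.

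The main obstacle I anticipate is the domain-monotonicity step fitted to the paper's precise variational definition of $N_g$ under Neumann boundary conditions. One must verify that admissible test subspaces on $\Omega_k$ extend to admissible test subspaces on $\Omega$ without dimensional loss, and that the Rayleigh-quotient inequality is preserved by the extension. For compactly supported smooth test functions inside $\Omega_k$ this is automatic (extension by zero is smooth and supported in $\Omega$, with identical integrals of $|\nabla_b u|^2$ and $u^2$); thus the whole reduction rests on the preliminary observation that the Neumann counting function of $\Omega_k$ can, without loss, be computed using only such strictly-compactly-supported test subspaces, which follows from density arguments analogous to those justifying the variational characterisation of $N_g$ in the non-compact case recalled earlier in Section~\ref{laplace}.
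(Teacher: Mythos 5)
Your reduction founders on the domain-monotonicity step. The claim that the Neumann counting function of $\Omega_k$ can ``without loss'' be computed using test subspaces compactly supported in the interior of $\Omega_k$ is false: restricting to such subspaces computes the \emph{Dirichlet} counting function of $\Omega_k$, which is in general strictly smaller (already $N_g(\Omega_k,\lambda)\geqslant 1$ for every $\lambda>0$ because of the constant functions, while the Dirichlet count vanishes for $\lambda$ below the first Dirichlet eigenvalue). Without that observation there is no inequality $N_g(\Omega_k,\lambda)\leqslant N_g(\Omega,\lambda)$ to pass to the limit with: Neumann counting functions are not monotone under domain inclusion --- a dumbbell-shaped $\Omega_k$ inside a ball $\Omega$ has $N(\Omega_k,\lambda)=2>1=N(\Omega,\lambda)$ for small $\lambda>0$. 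A genuine Neumann test function on $\Omega_k$ need not vanish near $\partial\Omega_k$, so extension by zero is not admissible, and no density argument repairs this. Your first step, that $\mathit{Min}\mathcal P(\Omega_k,[g_0],N,\alpha)=0$ by Lemma~\ref{props}, is correct, but the chain breaks immediately afterwards.

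The paper does not reduce Theorem~\ref{main:t2} to Theorem~\ref{main:t1}; it proves it directly by applying the \emph{global} decomposition result, Prop.~\ref{prop:glo}, to $(M,d_{g_0})$ equipped with the measure $\mathcal P_g|\Omega$. This produces $3k$ annuli with disjoint doubles of \emph{arbitrary} radii, and the global $(N,\alpha)$-normalisation of $g_0$ is precisely what controls $\int|\nabla_b u_i|^Q\,d\mathcal P_{g_0}$ on those possibly large annuli. This is also why Theorem~\ref{main:t1}, which assumes only local normalisation and therefore must use the local decomposition Prop.~\ref{prop:loc} (radii capped at $1$, plus the alternative of $\rho$-neighbourhoods of subdomains), carries the $\mathit{Min}\mathcal P$ correction term that Theorem~\ref{main:t2} avoids. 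If you want to salvage a reduction in your spirit, you would have to rerun the \emph{proof} of Theorem~\ref{main:t1} rather than cite its statement on exhausting subdomains: its test functions are compactly supported Lipschitz functions on $M$, hence simultaneously admissible for the Neumann problem on all of $\Omega$, and the only genuine modification needed is to replace the local covering and volume-growth bounds by their global counterparts --- which is exactly what the paper's proof does.
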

Recall that, by Example~\ref{carnot:glo}, any left-invariant metric $g_0$ on a Carnot group $G$ is globally $(4^{2Q},1)$-normalised. Thus, as a consequence of the last theorem, we see that inequality~\eqref{est:carnot} holds for an {\em arbitrary} subdomain $\Omega\subset G$ and a metric $g$ conformal to $g_0$ such that $\mathcal P_g(\Omega)<+\infty$. 

We proceed with explaining how Theorem~\ref{t2} follows from Corollary~\ref{c1}.
\begin{proof}[Proof of Theorem~\ref{t2}]
Recall that the eigenvalues are related to the counting function by the following formula
$$
\lambda_k(g)=\inf\{\lambda\geqslant 0: N_g(\lambda)\geqslant k\}.
$$
For a given positive integer $k$ take as $\lambda$ the value
$$
(\bar C_0\cdot(\mathit{Min}\mathcal P[g_0])^{2/Q}+\bar C_1^{-2/Q}\cdot(\max_M\hat{\mathcal P}_x(\hat{B}_x)^{2/Q})k^{2/Q})\mathcal P_g(M)^{-2/Q},
$$
where $\bar C_0$ and $\bar C_1$ are constants from Corollary~\ref{c1}. Clearly, for this choice of  $\lambda$ the hypotheses of Corollary~\ref{c1} are satisfied, and by the estimate for the counting function, we obtain that $N_g(\lambda)\geqslant k$, and conclude that $\lambda_k(g)\leqslant\lambda$.
\end{proof}

In view of obtaining eigenvalue bounds less dependent on the geometry of $M$, it is interesting to understand when the volume of a unit ball $\hat{\mathcal P}_x(\hat{B}_x)$ can be bounded by a quantity that depends on the dimension of $M$ only. Up to our knowledge, the following basic question is open.
\begin{quest}
Does there exist a real-valued function $C(Q)$ of a positive integer parameter $Q$ such that for any Carnot group of the Hausdorff dimension $Q$ equipped with a left-invariant metric $g$ the volume of a unit ball $\mathcal P_g(B)$ is at most $C(Q)$?
\end{quest} 
By Lemma~\ref{volume} the answer to this question is positive for corank $1$ Carnot groups. As a consequence, for corank $1$ sub-Riemannian manifolds the quantity $\max\hat{\mathcal P}_x(\hat{B}_x)$ can be dispensed with in Theorems~\ref{main:t1} and~\ref{main:t2}. Further, as a consequence of Corollary~\ref{c1} (or Theorem~\ref{t2}), for compact subdomains $\Omega$ in corank $1$ sub-Riemannian manifolds $M$ we obtain eigenvalue bounds
\begin{equation}
\label{ei:popp}
\lambda_k(g)\mathcal{P}_g(\Omega)^{2/Q}\leqslant C_0\cdot(\mathit{Min}\mathcal P(\Omega,[g_0]))^{2/Q}+C_1\cdot k^{2/Q},
\end{equation} 
stated in the introduction as Corollary~\ref{coro1}.
\subsection{Eigenvalue bounds for the Hausdorff sub-Laplacian}
As was touched upon above, it is important to know whether the eigenvalue bounds in~\eqref{ei:popp} hold for higher corank sub-Riemannian manifolds. Recall that when $M$ is a contact metric manifold the intrinsic sub-Laplacians $\Delta_b$ and $\tilde\Delta_b$ corresponding to the Popp and the Hausdorff measures respectively coincide, and inequalities~\eqref{ei:popp} can be also viewed as upper bounds for the eigenvalues $\tilde\lambda_k(g)$ of the latter sub-Laplacian. The purpose of the remaining part of the section is to show that {\em these upper bounds hold for $\tilde\lambda_k(g)$ on rather arbitrary compact sub-Riemannian manifolds}. The definitions and results below are direct analogues of the ones discussed earlier. The principal difference is that the quantity $\hat{\mathcal P}_x(\hat{B}_x)$ is already taken into the account in the definition of $\tilde\Delta_b$ and  does not participate in the statements of the results. 

Following the line of exposition above, we first define the notion of $(N,\alpha)$-normalised metric. In the sequel by $\mathcal S_g$ we denote the $Q$-dimensional spherical Hausdorff measure $\mathcal S^Q$ on the Carnot-Caratheodory space $(M,d_g)$.
\begin{D1bis}
Given an integer $N\geqslant 1$ and a real number $\alpha\geqslant1$ a complete regular sub-Riemannian manifold $(M,H,g)$ is called {\em locally} $(N,\alpha)$-{\em normalised} if for any $0<r\leqslant 1$
\begin{itemize}
\item[(i)] each Carnot-Caratheodory ball $B(x,r)$ of radius $r$ can be covered by $N$ balls of radius $r/2$;
\item[(ii)] $\mathcal S_g(B(x,r))\leqslant \alpha (2r)^Q$ for any $x\in M$, where $Q$ is the Hausdorff dimension of $M$, and $B(x,r)$ is a Carnot-Caratheodory ball in $M$. 
\end{itemize}
If the hypotheses~$(i)$ and~$(ii)$ hold for balls of arbitrary radius $r>0$, then the metric $g$ is called {\em globally} $(N,\alpha)$-{\em normalised}. 
\end{D1bis}
We proceed with a definition of conformal $(N,\alpha)$-minimal volume with respect to the Hausdorff measure.
\begin{D2bis}
Let $H\subset TM$ be a regular distribution and $c$ be a conformal class of metrics on it. For a given integer $N\geqslant 1$ and a real number $\alpha\geqslant 1$ by the {\em conformal $(N,\alpha)$-minimal Hausdorff volume} of a subdomain $\Omega\subset M$, denoted by
$$
\mathit{Min}\mathcal S(\Omega,c)=\mathit{Min}\mathcal S(\Omega,c,N,\alpha),
$$ 
we call the infimum of the Hausdorff volumes $\mathcal S_g(\Omega)$ over all locally $(N,\alpha)$-normalised metrics $g\in c$. 
\end{D2bis}

It is straightforward to see that the main properties of the conformal $(N,\alpha)$-minimal volume continue to hold in this new setting. In particular, so does the version of Lemma~\ref{inv:prop}, stating that there exist an integer-valued function $N(Q)$ and a real-valued function $\alpha(Q)$ such that the value
\begin{equation}
\label{inv:haus}
\mathit{Min}\mathcal S(\Omega,c)=\mathit{Min}\mathcal S(\Omega,c,N(Q),\alpha(Q))
\end{equation}
satisfies the same natural finiteness and vanishing properties. In fact, the functions $N(Q)$ and $\alpha(Q)$ can be chosen to be the same as in the proof of Lemma~\ref{inv:prop}. Mention also that the invariants $\mathit{Min}\mathcal P(\Omega,c)$ and $\mathit{Min}\mathcal S(\Omega,c)$ vanish or not simultaneously for compact subdomains $\Omega\subset M$. The relationship between them for more general subdomains can be described via the properties of the density function $f(x)=2^{-Q}\hat{\mathcal P}_x(\hat{B}_x)$.

We end with stating lower bounds for the counting function $\tilde N_g(\lambda)$ of the Hausdorff sub-Laplacian $(-\tilde\Delta_b)$. Note  that these results are independent of the corresponding bounds for $N_g(\lambda)$ of the Popp sub-Laplacian, and in general, can not be derived from Theorem~\ref{main:t1}. Throughout the rest of the section we assume that $M$ is a closed regular sub-Riemannian manifold. The results continue to hold for subdomains $\Omega$ in complete regular sub-Riemannian manifolds, if the Neumann problem is well-defined. The latter, for example, occurs when the density function $f(x)=2^{-Q}\hat{\mathcal P}_x(\hat{B}_x)$ is $C^1$-smooth on $\Omega$.
\begin{T1bis}
Let $(M,H,g_0)$ be a closed regular sub-Riemannian manifold. Then for any integer $N\geqslant 1$ and a real number $\alpha\geqslant 1$ there exist positive constants $C_0=C_0(N)$ and $C_1=C_1(N,Q)$ depending only on $N$ and  the Hausdorff dimension $Q$ of $M$ such that for any sub-Riemannian metric $g$ conformal to $g_0$ the counting function $\tilde N_g(\lambda)$ for the sub-Laplacian $(-\tilde\Delta_b)$  satisfies the inequality
$$
\tilde N_g(\lambda)\geqslant C_1\alpha^{-1}(\mathcal S_g(M)^{2/Q}\lambda-C_0(\mathit{Min}\mathcal S(M,[g_0]))^{2/Q})^{Q/2}
$$
for any $\lambda\geqslant C_0(\mathit{Min}\mathcal S(M,[g_0])/\mathcal S_g(\Omega))^{2/Q}$, where  
$$
\mathit{Min}\mathcal S(M,[g_0])=\mathit{Min}\mathcal S(M,[g_0],N,\alpha)  
$$
is the conformal $(N,\alpha)$-minimal Hausdorff volume of $M$. 
\end{T1bis}
The proof of the theorem follows the same argument as the proof of Theorem~\ref{main:t1} and is discussed at the end of Sect.~\ref{metas}. Choosing the functions $N(Q)$ and $\alpha(Q)$ so that they satisfy the properties $(i)$-$(iii)$ in the version of Lemma~\ref{inv:prop}, we obtain the following corollary.
\begin{C1bis}
Let $(M,H,g_0)$ be a closed regular sub-Riemannian manifold. Then there exist positive constants $\bar C_0=\bar C_0(Q)$ and $\bar C_1=\bar C_1(Q)$ depending only on the Hausdorff dimension $Q$ of $M$ such that for any  sub-Riemannian metric $g$ conformal to $g_0$  the counting function $\tilde N_g(\lambda)$ for the sub-Laplacian $(-\tilde\Delta_b)$ satisfies the inequality
$$
\tilde N_g(\lambda)\geqslant \bar C_1(\mathcal S_g(M)^{2/Q}\lambda-\bar C_0(\mathit{Min}\mathcal S(M,[g_0]))^{2/Q})^{Q/2}
$$
for any $\lambda\geqslant \bar C_0(\mathit{Min}\mathcal S(M,[g_0])/\mathcal P_g(M))^{2/Q}$, where  $\mathit{Min}\mathcal S(M,[g_0])$ is the conformal minimal volume of $M$ in the sense of relation~\eqref{inv:haus}. 
\end{C1bis}
As a direct consequence, we obtain the following eigenvalue bounds
$$
\tilde\lambda_k(g)\mathcal{S}_g(M)^{2/Q}\leqslant \bar C_0\cdot(\mathit{Min}\mathcal S(M,[g_0]))^{2/Q}+\bar C_1^{-2/Q}\cdot k^{2/Q},
$$
which generalise bounds in~\eqref{ei:popp}, see  Corollary~\ref{coro1}, to arbitrary closed regular sub-Rieman\-nian manifolds. Finally, mention that Theorem~\ref{main:t2} also has a  version for the Hausdorff sub-Laplacian $(-\tilde\Delta_b)$.

\section{Eigenvalue bounds on contact manifolds}
\label{contact}
\subsection{Sasakian structures and the proof of Theorem~\ref{t3}}
Let $(M,\theta,\phi,g)$ be a contact metric manifold of dimension $(2\ell+1)$, see Example~\ref{cont:def} for the notation. Recall that it is called {\em Sasakian} if the following relation holds:
$$
[\phi,\phi](X,Y)+d\theta(X,Y)\xi=0
$$
for any vector fields $X$ and $Y$ on $M$. Above $\xi$ stands for the Reeb field, and $[\phi,\phi]$ is the Nijenhuis tensor
$$
[\phi,\phi](X,Y)=\phi^2[X,Y]+[\phi X,\phi Y]-\phi[\phi X,Y]-\phi[X,\phi Y].
$$
In dimension $3$ the Sasakian hypothesis is equivalent to the Reeb field $\xi$ being Killing. Sasakian manifolds are often viewed as odd-dimensional versions of K\"ahler manifolds. We refer to~\cite{Blair10} for the detailed discussion of their properties and examples. 

Recall that the {\em Tanaka-Webster connection} on a Sasakian manifold is a unique linear connection $\nabla$ such that $\theta,\phi$, and $g$ are parallel and whose torsion satisfies the relations
$$
T(X,Y)=d\theta(X,Y)\xi\quad\text{and}\quad T(\xi,X)=0
$$
for any horizontal vector fields $X$ and $Y$ on $M$. In particular, the Reeb field $\xi$ is also parallel, and the Ricci curvature tensor satisfies the relation 
$$
\mathit{Ricci}(X,\xi)=0\qquad\text{for any vector field }X.
$$
In other words, only the restriction of $\mathit{Ricci}$ to a contact distribution may carry non-trivial geometric information.

The proof of Theorem~\ref{t3} is based on the volume doubling properties of Sasakian manifolds with lower Ricci curvature bound, studied in the series of papers~\cite{BBG,BG,BBGM}. We summarise these results in the following proposition.
\begin{prop}
\label{cdi}
Let $(M,\theta,\phi,g)$ be a complete Sasakian manifold whose Ricci curvature is bounded below by $-1$. Then there exist positive constants $\bar C_1$ and $\bar C_2$ depending on the dimension of $M$ only such that
$$
\mathit{Vol}_g(B(x,2r))\leqslant \bar{C}_1\exp(\bar{C}_2r^2)\cdot\mathit{Vol}_g(B(x,r))
$$
for any $x\in M$ and $r>0$, where $B(x,r)$ and $B(x,2r)$ are Carnot-Caratheodory balls. Moreover, if the Ricci curvature is non-negative, then there exists a constant $\bar{C}$ such that
$$
\mathit{Vol}_g(B(x,2r))\leqslant \bar{C}\cdot\mathit{Vol}_g(B(x,r))
$$
for any $x\in M$ and $r>0$.
\end{prop}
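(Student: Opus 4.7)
The plan is to derive this proposition by assembling the generalised curvature-dimension inequality for Sasakian manifolds established in~\cite{BG} with the volume-doubling machinery developed in~\cite{BBG,BBGM}. The overall strategy follows the Bakry-Ledoux paradigm: one first extracts a Bochner-type inequality that encodes the Ricci lower bound, then uses it to prove a Li-Yau type gradient estimate for positive solutions of the heat equation $\partial_t u=\Delta_b u$, and finally converts the resulting parabolic Harnack inequality into two-sided Gaussian heat kernel bounds from which volume doubling follows.

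\smallskip

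Concretely, I would first verify that on a complete Sasakian manifold whose Tanaka-Webster Ricci tensor is bounded below by $-1$, the generalised curvature-dimension inequality
\begin{equation*}
\Gamma_2(u)+\nu\,\Gamma_2^Z(u)\geqslant \tfrac{1}{d}(\Delta_b u)^2-\bigl(1+\tfrac{\kappa}{\nu}\bigr)\Gamma(u)+\rho_2\,\Gamma^Z(u)
\end{equation*}
holds for all $\nu>0$ and all smooth $u$, with constants $d,\kappa,\rho_2$ depending only on $\dim M$. Here $\Gamma,\Gamma^Z$ are the carr\'e du champ operators associated to the horizontal sub-Laplacian and the Reeb vertical direction respectively, and $\Gamma_2,\Gamma_2^Z$ are their iterated counterparts. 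This inequality is the content of the main Bochner identity in~\cite{BG,BBG}, and the Sasakian hypothesis (vanishing Nijenhuis-type tensor together with $\xi$ parallel with respect to the Tanaka-Webster connection) is precisely what makes the vertical error terms controllable.

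\smallskip

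From this CD inequality I would then invoke the generalised Li-Yau gradient estimate proven in~\cite{BBGM}: for any positive solution of $\partial_t u=\Delta_b u$ on $M$, the quantity $\Gamma(\log u)$ is controlled pointwise in terms of $\partial_t\log u$ with an additive error proportional to the lower Ricci bound. Integrating this estimate along minimising geodesics in the Carnot-Caratheodory metric yields a parabolic Harnack inequality of the form
$$
p_s(x,y)\leqslant p_t(x,z)\Bigl(\tfrac{t}{s}\Bigr)^{D}\exp\bigl(\tfrac{C\,d_g(y,z)^2}{t-s}+C(t-s)\bigr)
$$
for $0<s<t$ and $x,y,z\in M$. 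Setting $s=t$ and using the on-diagonal heat kernel bounds $p_t(x,x)\asymp 1/\mathit{Vol}_g(B(x,\sqrt{t}))$ that follow from the CD inequality by the Davies-Grigoryan method, one deduces the doubling inequality $\mathit{Vol}_g(B(x,2r))\leqslant\bar{C}_1\exp(\bar{C}_2 r^2)\mathit{Vol}_g(B(x,r))$, with the exponential factor coming directly from the negative Ricci bound. When $\mathit{Ricci}\geqslant 0$, the CD inequality is scale invariant, the Li-Yau estimate becomes homogeneous, and the exponential factor disappears, producing the scale-invariant doubling constant stated in the second part.

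\smallskip

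The genuine obstacle in this program, and the reason the cited papers are required, is establishing the CD inequality with the correct sign and with constants depending only on $\dim M$: the naive horizontal Bochner formula produces vertical terms with no a priori sign, and it is only after a careful use of the Sasakian torsion relations $T(X,Y)=d\theta(X,Y)\xi$, $T(\xi,X)=0$ and the parallelism of $\phi$ with respect to the Tanaka-Webster connection that these terms can be absorbed into $\Gamma_2^Z$ with a controlled constant. Once this algebraic step is completed, the rest of the argument is the by-now standard semigroup machinery.
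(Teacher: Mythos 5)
Your proposal is correct and follows essentially the same route as the paper: the paper's own ``proof'' consists precisely of citing \cite[Theorem~1.7]{BG} for the curvature-dimension inequality and \cite[Theorem~6]{BBGM} (together with \cite[Remark~4]{BBGM} and \cite{BBG} for the non-negative case) for the resulting volume doubling property, and your outline is an accurate expansion of the machinery inside those citations. The only slip is cosmetic: one cannot literally set $s=t$ in the Harnack inequality you display (the factor $\exp(Cd_g(y,z)^2/(t-s))$ degenerates); the standard step is to compare $p_t(x,x)$ with $p_{2t}(x,y)$ for $y$ in $B(x,\sqrt{t})$, integrate in $y$, and combine with the on-diagonal lower bound, which is what \cite{BBGM} does.
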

The first statement of Prop.~\ref{cdi} follows by combination of~\cite[Theorem~1.7]{BG} and~\cite[Theorem~6]{BBGM}. The second can be derived from the proof of~\cite[Theorem~6]{BBGM}, see~~\cite[Remark~4]{BBGM}, and is independently proved in~\cite{BBG}.

\begin{proof}[The proof of Theorem~\ref{t3}]
By Prop.~\ref{cdi} we see that there exists a constant $\bar{C}_0$ that depends on a dimension  of $M$ only such that
$$
\mathit{Vol}_{g_0}(B(x,2r))\leqslant \bar{C}_0\cdot\mathit{Vol}_{g_0}(B(x,r))\qquad\text{for any }0<r\leqslant 2, ~x\in M.
$$
A standard argument, see for example the proof of Lemma~\ref{norm:c}, implies that the metric $g_0$ satisfies a {\em local covering property}: there exists a constant $N=N(\bar{C}_0)$ that depends on $\bar{C_0}$ only such that any ball $B(x,r)$ with $0<r\leqslant 1$ can be covered by $N$ balls of radius $r/2$.
By Example~\ref{cont:na}, the quantity $\hat{\mathcal P}_x(\hat{B}_x)$ equals the volume of a unit ball in the Heisenberg group, and we clearly have
$$
\mathit{Vol}_{g_0}(B(x,r))\leqslant\alpha_1(g_0)\hat{\mathcal P}_x(\hat{B}_x)r^{2\ell+2}\qquad\text{for any }0<r\leqslant 1, ~x\in M.
$$ 
Thus, we see that the metric $g_0$ is locally $(N,\alpha)$-normalised for $N=N(\bar{C}_0)$ and $\alpha=\alpha_1(g_0)$. By Theorem~\ref{main:t1} there exist constants $C_0=C_0(N)$ and $C_1=C_1(N,\ell)$ such that for any sub-Riemannian metric $g$ conformal to $g_0$ on the contact distribution the sub-Laplacian eigenvalues $\lambda_k(g)$ satisfy the following inequalities
$$
\lambda_k(g)\mathit{Vol}_g(M)^{1/(\ell+1)}\leqslant C_0\cdot(\mathit{Min\mathcal P}[g_0])^{1/(\ell+1)}+C_1\cdot(\alpha_1(g_0) \omega_\ell)^{1/(\ell+1)}k^{1/(\ell+1)}
$$
for any $k\geqslant 1$, where $\omega_\ell$ is the volume of a unit ball in the Heisenberg group $\mathbb H^\ell$. First, note that the constants $C_0$ and $C_1$ above now depend on the dimension of $M$ only. Second, by the definition of the conformal $(N,\alpha)$-minimal volume we have $\mathit{Min\mathcal P}[g_0]\leqslant\mathit{Vol_{g_0}}(M)$. Combining these observations, we obtain the eigenvalue bounds stated in the theorem.
\end{proof}

\subsection{Volume comparison theorems and their consequences}
Now we discuss recent volume comparison theorems due to Agrachev and Lee~\cite{AL} and Lee and Li~\cite{LL}, which give bounds for the volume growth modulus $\alpha(g)$ and allow to dispense with this quantity in the eigenvalue bounds.

Recall that the Heisenberg group $\mathbb H^\ell$ can be viewed as a product $\mathbb C^\ell\times\mathbb R$ with a Sasakian structure $(\theta,\phi,g)$, where
$$
\theta=dt+\frac{1}{2}\sum_i(y^idx^i-x^idy^i),\qquad (x^1+iy^1,\ldots,x^\ell+iy^\ell,t)\in\mathbb C^\ell\times\mathbb R,
$$
the frame
$$
X_i=\frac{\partial}{\partial x^i}-\frac{1}{2}y^i\frac{\partial}{\partial t},\qquad Y_i=\frac{\partial}{\partial y^i}+\frac{1}{2}x^i\frac{\partial}{\partial t},\qquad Z=\frac{\partial}{\partial t}
$$
is orthonormal in a Riemannian metric $g$, and $\phi$ satisfies the relations
$$
\phi(X_i)=Y_i,\quad\qquad \phi(Y_i)=-X_i,\quad\qquad\phi(Z)=0.
$$
A straightforward calculation shows, see for example~\cite{LL}, that the curvature tensor of a Tanaka-Webster connection on $\mathbb H^\ell$ vanishes. The following result due to~\cite{LL} uses $\mathbb H^\ell$ as a comparison space to bound the volumes of Carnot-Caratheodory balls.
\begin{prop}
\label{vc1}
Let $(M,\theta_0,\phi_0,g_0)$ be a complete Sasakian manifold of dimension $(2\ell+1)$ whose horizontal sectional curvatures of a Tanaka-Webster connection are non-negative. Then for any $x\in M$ the volume $\mathit{Vol}_{g_0}(B(x,r))$ of a Carnot-Caratheodory ball $B(x,r)$ is not greater than the volume of a ball of the same radius in the Hesenberg group, that is
$$
\mathit{Vol}_{g_0}(B(x,r))\leqslant\omega_\ell r^{2\ell+2}\qquad\text{for any}\quad r>0,
$$
where $\omega_\ell$ is the volume of a unit ball in $\mathbb H^\ell$.
\end{prop}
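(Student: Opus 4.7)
The plan is to adapt the classical Bishop--Gromov volume comparison argument to the sub-Riemannian Sasakian setting, with the Heisenberg group $\mathbb{H}^\ell$ playing the role of the flat model: as noted in the excerpt, the Tanaka--Webster curvature of $\mathbb{H}^\ell$ vanishes identically, so it is the natural zero-curvature comparison space within the class of complete Sasakian manifolds. Since the volume of $B(x,r)$ in $\mathbb{H}^\ell$ scales as $\omega_\ell r^{2\ell+2}$ by the dilation property, it suffices to produce a pointwise Jacobian comparison between the sub-Riemannian exponential maps of $(M,g_0)$ and $\mathbb{H}^\ell$.

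First, I would parametrise $B(x,r)$ by the normal sub-Riemannian exponential map $\exp_x: T_x^*M\to M$ based at $x$, which sends a covector $\xi$ to the time-$1$ endpoint of the normal extremal with initial covector $\xi$. Outside the sub-Riemannian cut locus (a measure-zero set in the image), this map is a local diffeomorphism, and the Riemannian volume element pulls back to the standard covector measure weighted by the Jacobian $J_M(t,\xi)$ of $\exp_x$ restricted to the constant-energy sphere. For $\mathbb{H}^\ell$ the analogous Jacobian $J_{\mathbb{H}^\ell}(t,\xi)$ is explicit and integrates to $\omega_\ell r^{2\ell+2}$ over the cotangent ball.

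Second, the key analytic step is to derive a matrix Riccati equation for $J_M$ along each horizontal geodesic, using the symplectic/Jacobi-curve canonical form developed by Agrachev--Lee and Lee--Li. In the Sasakian case this canonical form splits into a horizontal block (coupled to the horizontal sectional curvature of the Tanaka--Webster connection) and a transverse block governed by the Reeb direction, whose algebraic structure is rigid and independent of the metric. The comparison step is then a pointwise Riccati inequality: non-negativity of the horizontal sectional curvature forces the relevant curvature matrix of $M$ to dominate that of $\mathbb{H}^\ell$ (which is zero), and standard Riccati monotonicity yields $J_M(t,\xi)\leqslant J_{\mathbb{H}^\ell}(t,\xi)$ for all $t$ up to the first conjugate time. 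Integration over the cotangent ball of radius $r$ and a cut-locus truncation argument then deliver the bound $\mathit{Vol}_{g_0}(B(x,r))\leqslant\omega_\ell r^{2\ell+2}$.

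The main obstacle is the correct algebraic treatment of the Jacobi curve along sub-Riemannian geodesics: unlike the Riemannian case, the Jacobi equation has a nontrivial symplectic-invariant normal form in which the horizontal and Reeb directions interact, and the appropriate curvature object is not the full Riemann tensor but the specific Agrachev--Lee curvature operator whose non-negativity has to be matched to horizontal Tanaka--Webster sectional curvatures. The book-keeping to verify that the scalar Riccati comparison survives this matrix/symplectic formulation is the technical heart of the argument, and it is exactly what is carried out in \cite{AL,LL}; once that machinery is invoked, the volume bound follows by the integration described above.
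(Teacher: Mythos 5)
The paper offers no proof of Proposition~\ref{vc1} at all: it is quoted verbatim as a result of Lee and Li~\cite{LL}, with the comparison machinery left entirely to that reference. Your sketch of the normal-exponential-map parametrisation, the Jacobi-curve canonical form, and the matrix Riccati comparison against the flat Heisenberg model is an accurate description of what~\cite{AL,LL} actually do, and since you likewise defer the technical core to those references, your route is essentially the same as the paper's.
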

Recall that the {\em volume growth} $\alpha_0(g)$ is defined as the quantity
$$
\alpha_0(g_0)=\sup\{\mathit{Vol}_{g_0}(B(x,r))/(\omega_\ell r^{2\ell+2}): x\in M, r>0\}.
$$
In particular, Prop.~\ref{vc1} implies that for a  Sasakian manifold $M$ whose horizontal sectional curvatures are non-negative, we have  $\alpha_0(g_0)\leqslant 1$. Combing this fact with Corollary~\ref{coro2}, we see that on a compact manifold $M$ for any contact metric structure $(\theta,\phi,g)$ with $\theta=e^\varphi\theta_0$ and $j(\phi)=j(\phi_0)$ the sub-Laplacian eigenvalues $\lambda_k(g)$ satisfy the inequalities
\begin{equation}
\label{kor1}
\lambda_k(g)\mathit{Vol}_g(M)^{1/(\ell+1)}\leqslant C\cdot k^{1/(\ell+1)},
\end{equation}
where the constant $C$ depends on the dimension of $M$ only. Using Theorem~\ref{main:t2} we are able to obtain a more general result for not necessarily compact manifolds.
\begin{theorem}
\label{main:t5}
Let $(M,\theta_0,\phi_0,g_0)$ be a complete Sasakian manifold of dimension $(2\ell+1)$ whose horizontal sectional curvatures of a Tanaka-Webster connection are non-negative. Then there exists a constant $C>0$ depending on the dimension of $M$ only such that for any subdomain $\Omega\subset M$ and any contact metric structure $(\theta,\phi,g)$ on $\Omega$ with $\theta=e^\varphi\theta_0$, $j(\phi)=j(\phi_0)$, and $\mathit{Vol}_g(\Omega)<+\infty$ the counting function $N_g(\Omega,\lambda)$ of the Neumann sub-Laplacian  $(-\Delta_b)$ on $\Omega$ satisfies the inequality
$$
N_g(\Omega,\lambda)\geqslant \lfloor C\cdot\mathit{Vol}_g(\Omega)\lambda^{\ell+1}\rfloor\qquad\text{for any~}\lambda\geqslant 0.
$$
Besides, if a subdomain $\Omega\subset M$ is compact, then the floor function is unnecessary in the estimate above.
\end{theorem}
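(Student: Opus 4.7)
The plan is to deduce Theorem~\ref{main:t5} from Theorem~\ref{main:t2} by verifying that $g_0$ is \emph{globally} $(N,\alpha)$-normalised with constants depending only on the dimension. This is a non-compact strengthening of the route used for Theorem~\ref{t3}, where the asymptotically sharp result follows precisely because the Lee--Li volume comparison (Prop.~\ref{vc1}) and the volume doubling of Prop.~\ref{cdi} hold on \emph{all} balls rather than only on balls of radius $\leqslant 1$.

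First I would observe that non-negative horizontal sectional curvatures of the Tanaka--Webster connection imply non-negative horizontal Tanaka--Webster Ricci curvature by tracing; combined with the identity $\mathit{Ricci}(X,\xi)=0$ from the Sasakian structure, this gives a non-negative Ricci lower bound. The second half of Prop.~\ref{cdi} then yields a dimensional doubling constant $\bar C=\bar C(\ell)$ with
$$
\mathit{Vol}_{g_0}(B(x,2r))\leqslant \bar C\cdot\mathit{Vol}_{g_0}(B(x,r))\qquad\text{for every }x\in M,~r>0.
$$

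Next I would check the two conditions of Definition~\ref{norm:def} globally. For the volume growth property, recall that on a contact metric manifold the Popp measure coincides with $\mathit{Vol}_{g_0}$ and the density $\hat{\mathcal P}_x(\hat B_x)$ is the constant $\omega_\ell$ (Example~\ref{cont:na}); hence Prop.~\ref{vc1} reads
$$
\mathcal P_{g_0}(B(x,r))\leqslant \omega_\ell\,r^{2\ell+2}=\hat{\mathcal P}_x(\hat B_x)\, r^{Q}\qquad\text{for every }x\in M,~r>0,
$$
so hypothesis~(ii) holds with $\alpha=1$ on balls of arbitrary radius. The covering hypothesis~(i) follows by the standard maximal-disjoint-packing argument used in the proof of Lemma~\ref{norm:c}: given a ball $B(x,r)$, select a maximal disjoint subfamily $\{B(x_i,r/4)\}$ centred in $B(x,r)$; the balls $\{B(x_i,r/2)\}$ cover $B(x,r)$ and their cardinality is bounded by $\mathcal P_{g_0}(B(x_{i_0},4r))/\mathcal P_{g_0}(B(x_{i_0},r/4))$, which by the global doubling gives a dimensional bound $N=N(\bar C,\ell)$ valid for every $r>0$.

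With $g_0$ globally $(N,\alpha)$-normalised and $\alpha=1$, Theorem~\ref{main:t2} applied to the conformal class of $g_0$ produces, for any subdomain $\Omega\subset M$ and any conformal sub-Riemannian metric $g$ of finite volume,
$$
N_g(\Omega,\lambda)\geqslant \bigl\lfloor C(N,Q)\cdot(\alpha\max_\Omega\hat{\mathcal P}_x(\hat B_x))^{-1}\mathcal P_g(\Omega)\,\lambda^{Q/2}\bigr\rfloor,
$$
with $Q/2=\ell+1$. Since the factor $\alpha\max_\Omega\hat{\mathcal P}_x(\hat B_x)=\omega_\ell$ depends only on $\ell$ and $\mathcal P_g=\mathit{Vol}_g$ for contact metric structures, the stated inequality follows with a new constant $C=C(\ell)$. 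The only non-routine point is the global (as opposed to compact-set) validity of Prop.~\ref{cdi}, which is the main reason the non-negative curvature hypothesis is essential; everything else is a careful bookkeeping of conformal invariants and dimensional constants.
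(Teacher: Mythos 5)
Your argument is correct and follows essentially the same route as the paper's own proof: global volume doubling from Proposition~\ref{cdi} gives the global covering property, Proposition~\ref{vc1} gives the global volume growth property with $\alpha=1$, and Theorem~\ref{main:t2} then yields the bound after identifying $\mathcal P_g$ with $\mathit{Vol}_g$ and $\hat{\mathcal P}_x(\hat B_x)$ with $\omega_\ell$. Your explicit remark that non-negative horizontal sectional curvature implies the non-negative Ricci bound needed for the second half of Proposition~\ref{cdi} is a small point the paper leaves implicit, but it does not change the argument.
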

\begin{proof}
By Prop.~\ref{cdi} we see that there exists a universal constant $\bar{C}_0$ such that
$$
\mathit{Vol}_{g_0}(B(x,2r))\leqslant \bar{C}_0\cdot\mathit{Vol}_{g_0}(B(x,r))\qquad\text{for any }r>0, ~x\in M.
$$
By a standard argument, see for example the proof of Lemma~\ref{norm:c}, we see that the metric $g_0$ satisfies a {\em global covering property}: there exists a constant $N_0=N(\bar{C}_0)$ such that any ball $B(x,r)$ with $r>0$ can be covered by $N_0$ balls of radius $r/2$. Combining this property with Prop.~\ref{vc1}, we conclude that the metric $g_0$ is globally $(N_0,1)$-normalised. Thus, by Theorem~\ref{main:t2} there exists a constant $C=C(N_0,\ell)$ such that for any $\Omega\subset M$ and any sub-Riemannian metric $g$ on $\Omega$ that is conformal to $g_0$ on a contact distribution and has finite volume, the counting function $N_g(\Omega,\lambda)$ satisfies the inequality
$$
N_g(\Omega,\lambda)\geqslant \lfloor C\cdot (\max\hat{\mathcal P}_g(\hat{B}_x))^{-1}\mathcal P_g(\Omega)\lambda^{\ell+1}\rfloor
$$
for any $\lambda\geqslant 0$. Now if a metric $g$ is associated with a contact metric structure, then by Example~\ref{cont:na} the value $\hat{\mathcal P}_g(\hat{B}_x)$ equals $\omega_\ell$ (the volume of a unit ball in the Heisenberg group $\mathbb H^\ell$), and the volume $\mathcal P_g(\Omega)$ coincides with the Riemannian volume $\mathit{Vol}_g(\Omega)$.
\end{proof}
Mention that the eigenvalue bounds~\eqref{kor1} can be viewed as  a version of the Korevaar result~\cite{Korv} on the Laplace eigenvalue bounds for Riemannian metrics conformal to a metric of non-negative Ricci curvature. In view of this analogy we pose the following question.
\begin{quest}
Does the conclusion of Theorem~\ref{main:t5} hold under the hypotheses that the Ricci curvature of a Tanaka-Webster connection of $g_0$ is non-negative?
\end{quest}

We proceed with a discussion of volume comparison results for $3$-dimensional Sasakian manifolds. Recall that the corresponding Sasakian space forms are $3$-dimensinal Lie groups $G_\kappa$ whose Lie algebra has a basis $\{X,Y,Z\}$ that satisfies the relations
$$
[X,Y]=Z,\qquad [X,Z]=-\kappa Y,\qquad [Y,Z]=\kappa X.
$$
The parameter $\kappa$ here takes the values $1$, $0$, and $-1$, which correspond to the cases when $G_\kappa$ is $SU(2)$, $\mathbb H^1$, and $SL(2)$ respectively. The left-invariant contact form $\theta$ such that
$$
\theta(X)=\theta(Y)=0\qquad\text{ and }\qquad \theta(Z)=-1,
$$
the metric $g$ that makes $\{X,Y,Z\}$ orthonormal, and the endomorphism
$$
\phi(X)=-Y,\qquad\phi(Y)=X,\qquad\phi(Z)=0,
$$
form a Sasakian structure on $G_\kappa$. As is known~\cite{AL}, the Tanaka Webster sectional curvature of a plane spanned by $X$ and $Y$ on $G_\kappa$ equals $\kappa$. The space forms for all real values of $\kappa$ can be obtained from the examples above by an appropriate scaling.

In~\cite{AL} Agrachev and Lee prove the following volume comparison theorem for Carnot-Caratheodory balls.
\begin{prop}
\label{vc2}
Let $(M,\theta_0,\phi_0,g_0)$ be a $3$-dimensional complete Sasakian manifold whose Ricci curvature of a Tanaka-Webster connection is bounded below by $\kappa$, that is
$$
\mathit{Ricci}(X,X)\geqslant\kappa\cdot g_0(X,X)\qquad\text{for any horizontal vector field }X.
$$
Then for any $x\in M$ the volume of a Carnot-Caratheodory ball $B(x,r)$ satisfies the inequality
$$
\mathit{Vol}_{g_0}(B(x,r))\leqslant\omega_\kappa(r)
$$
for any $r>0$ if $\kappa\geqslant 0$, and any $0<r\leqslant 2\sqrt{2}\pi/\sqrt{-\kappa}$ if $\kappa<0$, where $\omega_\kappa(r)$ stands for the volume of a Carnot-Caratheodory ball of radius $r$ in a Sasakian space form of constant curvature $\kappa$.
\end{prop}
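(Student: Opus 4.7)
The plan is to prove Proposition~\ref{vc2} by a sub-Riemannian analogue of the Bishop--Gromov volume comparison argument, carried out along normal sub-Riemannian geodesics. The outline follows the Agrachev--Lee framework~\cite{AL}, in which curvature-type invariants are defined on the cotangent bundle via the Jacobi curve, and model spaces $G_\kappa$ are used as the comparison geometry.

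First, I would express the volume of the ball $B(x,r)$ as an integral over a suitable subset of the cotangent fiber $T_x^*M$ using the sub-Riemannian exponential map $\exp_x$. On a 3-dimensional Sasakian manifold normal extremals are obtained as Hamiltonian flow lines of $H(\lambda)=\tfrac12(\langle\lambda,X\rangle^2+\langle\lambda,Y\rangle^2)$, where $(X,Y)$ is a local orthonormal horizontal frame and the Reeb momentum $h_0=\langle\lambda,\xi\rangle$ is a first integral. Using the resulting foliation of $T_x^*M$ by level sets of $2H$ and $h_0$, one obtains a formula
\begin{equation*}
\mathit{Vol}_{g_0}(B(x,r))=\int_{\mathcal D_r(x)} J(x,\lambda)\,d\lambda,
\end{equation*}
where $\mathcal D_r(x)\subset T_x^*M$ is the set of initial covectors whose extremal reaches $B(x,r)$ within its injectivity range, and $J(x,\lambda)$ is the Jacobian of the exponential map.

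Second, I would derive a matrix Jacobi/Riccati equation along each normal extremal. In the 3-dimensional Sasakian setting, the Jacobi curve carries a canonical splitting compatible with the contact distribution and the Reeb direction, and the Jacobian $J(x,\lambda)$ satisfies a $2\times2$ Riccati equation of the form
\begin{equation*}
\dot R + R^2 + \mathcal R(t)=0,
\end{equation*}
in which the curvature matrix $\mathcal R(t)$ is expressible through the Tanaka--Webster sectional curvature of the plane containing $\dot\gamma(t)$ (together with a universal contribution coming from the Reeb momentum $h_0$). The point is that in dimension three the horizontal direction is one-dimensional at each instant, so the Tanaka--Webster sectional curvature of that plane coincides with the Tanaka--Webster Ricci curvature contracted with the unit horizontal tangent. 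Hence the Ricci bound $\mathit{Ricci}\geqslant\kappa$ translates into a pointwise bound $\mathcal R(t)\geqslant\kappa\cdot\mathit{Id}$ in the appropriate sense.

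Third, I would invoke a Riccati comparison principle: the solution $R$ associated to the manifold $M$ is dominated by the solution $R^{(\kappa)}$ associated to the constant-curvature model $G_\kappa$, provided no focal/conjugate point has yet occurred. Integrating this comparison gives $J(x,\lambda)\leqslant J^{(\kappa)}(\lambda)$ on a common domain. Since the model $G_\kappa$ is homogeneous and its Jacobian as well as its first conjugate time are computable explicitly, one obtains
\begin{equation*}
\mathit{Vol}_{g_0}(B(x,r))\leqslant \int_{\mathcal D_r^{(\kappa)}} J^{(\kappa)}(\lambda)\,d\lambda = \omega_\kappa(r),
\end{equation*}
for all $r$ up to the model conjugate radius, which yields the stated range $0<r\leqslant 2\sqrt2\pi/\sqrt{-\kappa}$ when $\kappa<0$ (and all $r>0$ when $\kappa\geqslant0$, as then $R^{(\kappa)}$ is defined globally).

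The main obstacle I expect is the correct bookkeeping of the sub-Riemannian Jacobi equation and the proof of the Riccati comparison: unlike the Riemannian case, the Jacobi curve must be reduced modulo the direction of the extremal and decomposed in a frame adapted to the contact distribution and the Reeb vector field, and the curvature operator $\mathcal R(t)$ mixes the intrinsic Tanaka--Webster curvature with terms produced by the Reeb momentum $h_0$. Handling this mixing so that the Ricci lower bound still dominates $\mathcal R(t)$ in the Loewner order, and identifying the first conjugate time of $G_\kappa$ with the sharp radius $2\sqrt2\pi/\sqrt{-\kappa}$, is the delicate technical core of the proof.
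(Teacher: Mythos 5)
The paper does not actually prove Proposition~\ref{vc2}: it is quoted from Agrachev and Lee~\cite{AL}, so there is no in-paper argument to measure your proposal against. Your outline is a faithful reconstruction of the strategy used in~\cite{AL} --- the volume of $B(x,r)$ written as an integral of the Jacobian of the sub-Riemannian exponential over the cotangent fibre, a Jacobi/Riccati comparison along normal extremals in a frame adapted to the contact distribution and the Reeb direction, and the homogeneous model spaces $G_\kappa$ as comparison geometry. Your observation that in dimension three the Ricci lower bound on horizontal vectors is equivalent to a lower bound on the single Tanaka--Webster sectional curvature of the contact plane is also correct.

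That said, what you have written is a plan rather than a proof. The steps you yourself flag as ``the delicate technical core'' --- deriving the correct curvature operator $\mathcal R(t)$ from the canonical frame of the Jacobi curve (which in the contact case is not a plain symmetric $2\times 2$ Riccati equation, but has a specific structure with universal entries produced by the Reeb momentum $h_0$), proving the comparison principle for that equation, and computing the model Jacobian explicitly so as to arrive at the stated formula for $\omega_\kappa(r)$ --- are precisely the content of~\cite{AL}, and none of them is carried out here. One concrete discrepancy worth noting: the paper attributes the restriction $0<r\leqslant 2\sqrt{2}\pi/\sqrt{-\kappa}$ in the case $\kappa<0$ to the fact that the model space $SL(2)$ is not simply connected, whereas you attribute it to the first conjugate time of the model; if you execute the argument you will need to identify which of these is the actual obstruction to extending the comparison beyond that radius.
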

The restriction on the radius in the negative curvature case $\kappa<0$ is related to the fact that $SL(2)$ is not simply connected. In~\cite{AL} the authors give an explicit formula for the quantity $\omega_\kappa(r)$ for the above values of $r$ as the integral 
$$
\omega_\kappa(r)=2\pi\int_{\Omega_\kappa(r)}b_\kappa(t,z)tdtdz,
$$
where $\Omega_\kappa(r)=\{(t,z)\in\mathbb R^2: t\in (0,r], \kappa t^2+z^2\leqslant 4\pi^2\}$, and the function $b_\kappa$ is defined as
$$
b_\kappa(t,z)=\left\{
\begin{array}{ll}
t^2(2-2\cos\tau-\tau\sin\tau)/\tau^4 & \text{if }\quad \kappa t^2+z^2>0\\
t^2/12& \text{if }\quad \kappa t^2+z^2=0\\
t^2(2-2\cosh\tau+\tau\sinh\tau)/\tau^4 & \text{if }\quad \kappa t^2+z^2<0
\end{array}
\right.
$$
with $\tau=\sqrt{\abs{\kappa t^2+z^2}}$. In particular, it is straightforward to see that there exists a universal constant $\omega_*$ such that $\omega_\kappa(r)\leqslant\omega_*r^4$ for any $r>0$ if $\kappa\geqslant 0$, and any $0<r<2\sqrt{2}\pi/\sqrt{-\kappa}$ if $\kappa<0$. 

The next result shows that in dimension $3$ the volume growth function in Theorem~\ref{t3} can be dispensed with.
\begin{theorem}
\label{main:t6}
Let $(M,\theta_0,\phi_0,g_0)$ be a complete $3$-dimensional Sasakian manifold whose Ricci curvature of a Tanaka-Webster connection is bounded below by $-1$. Then there exist universal positive constants $C_0$ and $C_1$ such that for any subdomain $\Omega\subset M$ and any contact metric structure $(\theta,\phi,g)$ on $\Omega$ with $\theta=e^\varphi\theta_0$, $j(\phi)=j(\phi_0)$, and $\mathit{Vol}_g(\Omega)<+\infty$ the counting function $N_g(\Omega,\lambda)$ of the Neumann sub-Laplacian  $(-\Delta_b)$ on $\Omega$ satisfies the inequality
$$
N_g(\Omega,\lambda)\geqslant \lfloor C_1\mathit{Vol}_g(\Omega)\left(\lambda-C_0\frac{\mathit{Vol}_{g_0}(\Omega)^{1/2}}{\mathit{Vol}_{g}(\Omega)^{1/2}}\right)^2\rfloor
$$
for any $\lambda\geqslant C_0(\mathit{Vol}_{g_0}(\Omega)/\mathit{Vol}_{g}(\Omega))^{1/2}$. Besides, if a subdomain $\Omega\subset M$ is compact, then the floor function is unnecessary in the estimate above.
\end{theorem}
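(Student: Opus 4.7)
The plan is to verify the hypotheses of Theorem~\ref{main:t1} for $g_0$ with parameters $N,\alpha$ that turn out to be universal constants, and then simplify the resulting estimate. The Hausdorff dimension of $(M,H,g_0)$ is $Q=4$, and by Example~\ref{cont:na} the nilpotent approximation at every point of a contact metric manifold is isometric to $(\mathbb{H}^1,h)$, so $\hat{\mathcal{P}}_x(\hat{B}_x)=\omega_1$ is the same universal constant at every point.

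First I would establish the local covering property. The Ricci lower bound $\mathit{Ricci}\geqslant -1$, via Proposition~\ref{cdi}, gives
$$
\mathit{Vol}_{g_0}(B(x,2r))\leqslant \bar{C}_1\exp(\bar{C}_2 r^2)\,\mathit{Vol}_{g_0}(B(x,r))\qquad\text{for all }r>0,
$$
which for $0<r\leqslant 1$ reduces to a uniform doubling estimate with a universal constant $\bar{C}_0=\bar{C}_1\exp(\bar{C}_2)$. A standard ball-packing argument, identical to the one carried out in the proof of Lemma~\ref{norm:c}, then produces a universal integer $N=N(\bar{C}_0)$ so that every Carnot--Carath\'eodory ball of radius $r\leqslant 1$ can be covered by $N$ balls of radius $r/2$.

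Next I would verify the volume growth property. Since $\kappa=-1$, Proposition~\ref{vc2} applies on the whole range $0<r\leqslant 1<2\sqrt{2}\pi$ and yields $\mathit{Vol}_{g_0}(B(x,r))\leqslant \omega_{-1}(r)$ for every $x\in M$. Using the explicit integral formula for $\omega_\kappa(r)$ given just before the theorem statement, there is a universal constant $\omega_*$ with $\omega_{-1}(r)\leqslant \omega_*r^4$ for all such $r$. Since $\hat{\mathcal{P}}_x(\hat{B}_x)=\omega_1$, this can be rewritten as
$$
\mathit{Vol}_{g_0}(B(x,r))\leqslant \alpha\,\hat{\mathcal{P}}_x(\hat{B}_x)\,r^Q\qquad\text{with }\alpha=\omega_*/\omega_1,
$$
a universal constant. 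Combined with the previous step, this says $g_0$ is locally $(N,\alpha)$-normalised in the sense of Definition~\ref{norm:def} with $N,\alpha$ depending only on the dimension hypothesis and on the Ricci lower bound $-1$.

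Finally I would invoke Theorem~\ref{main:t1}. With $Q=4$ and the above $N,\alpha$, it produces universal $C_0=C_0(N)$ and $C_1'=C_1'(N,4)$ such that for any conformal $g$ with finite Popp volume
$$
N_g(\Omega,\lambda)\geqslant \left\lfloor\frac{C_1'}{\alpha\omega_1}\,\bigl(\mathcal{P}_g(\Omega)^{1/2}\lambda - C_0\,\mathit{Min}\mathcal{P}(\Omega,[g_0])^{1/2}\bigr)^2\right\rfloor
$$
whenever $\lambda\geqslant C_0\bigl(\mathit{Min}\mathcal{P}(\Omega,[g_0])/\mathcal{P}_g(\Omega)\bigr)^{1/2}$. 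Testing the minimal-volume infimum with $g_0$ itself (which is locally normalised by the previous step) gives $\mathit{Min}\mathcal{P}(\Omega,[g_0])\leqslant \mathcal{P}_{g_0}(\Omega)=\mathit{Vol}_{g_0}(\Omega)$, and the identity $\mathcal{P}_g=\mathit{Vol}_g$ for contact metric manifolds recorded in Example~\ref{cont:laplace} applies here. Replacing $\mathit{Min}\mathcal{P}$ by the larger $\mathit{Vol}_{g_0}(\Omega)$ only weakens the square (the inner parenthesis is assumed nonnegative), and factoring $\mathit{Vol}_g(\Omega)$ out of the square gives precisely the stated bound with $C_1=C_1'/(\alpha\omega_1)$.

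The entire argument is structurally routine given Propositions~\ref{cdi} and~\ref{vc2}; the only mildly delicate point is the second step, where one must exploit the explicit Agrachev--Lee formula to strip off the Heisenberg normalisation $\omega_1$ and expose a universal constant $\alpha$. The necessity of the dimensional restriction and of the lower Ricci bound both enter only at this stage, through Proposition~\ref{vc2}.
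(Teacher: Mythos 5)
Your proposal is correct and follows essentially the same route as the paper's own proof: the local covering property from Proposition~\ref{cdi}, the volume growth property from Proposition~\ref{vc2} together with $\omega_{-1}(r)\leqslant\omega_*r^4$ (giving $\alpha=\omega_*/\omega_1$), and then Theorem~\ref{main:t1} with $Q=4$. The only difference is that you spell out the final bookkeeping — bounding $\mathit{Min}\mathcal P(\Omega,[g_0])$ by $\mathit{Vol}_{g_0}(\Omega)$ and factoring out $\mathit{Vol}_g(\Omega)$ — which the paper leaves implicit.
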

\begin{proof}
Following the argument in the proof of Theorem~\ref{t3}, by Prop.~\ref{cdi} we see that the metric $g_0$ satisfies the local covering property: there exists a universal constant $N_0$ such that any ball $B(x,r)$ with $0<r\leqslant 1$ can be covered by $N_0$ balls of radius $r/2$. By Prop.~\ref{vc2} together with the inequality $\omega_{-1}(r)\leqslant \omega_* r^4$, we conclude that the metric $g_0$ is $(N_0,\alpha_0)$-normalised with $\alpha_0=\omega_*/\omega_1$, where $\omega_1$ is the volume of a unit ball in $\mathbb H^1$, see Example~\ref{cont:na}. Now the statement follows directly from Theorem~\ref{main:t1}.
\end{proof}

As a direct consequence of Theorem~\ref{main:t6} we obtain the following statement.
\begin{cor}
\label{main:c7}
Let $(M,\theta,\phi,g)$ be a complete $3$-dimensional Sasakian manifold whose Ricci curvature of a Tanaka-Webster connection is bounded below, $\mathit{Ricci}\geqslant -a^2$. Then there exist universal positive constants $C_0$ and $C_1$ such that for any subdomain $\Omega\subset M$ of finite volume, $\mathit{Vol}_g<+\infty$, the counting function $N_g(\Omega,\lambda)$ of the Neumann sub-Laplacian $(-\Delta_b)$ on $\Omega$ satisfies the inequality
$$
N_g(\Omega,\lambda)\geqslant \lfloor C_1\mathit{Vol}_g(\Omega)(\lambda-C_0a^2)^2\rfloor
$$
for any $\lambda\geqslant C_0a^2$. Besides, if a subdomain $\Omega\subset M$ is compact, then the floor function is unnecessary in the estimate above.
\end{cor}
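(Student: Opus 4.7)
The plan is a direct scaling reduction to Theorem~\ref{main:t6}. Starting with the Sasakian datum $(M,\theta,\phi,g)$ satisfying $\mathit{Ricci}\geqslant-a^2$, I would introduce the rescaled contact metric structure $(\tilde\theta,\tilde\phi,\tilde g)$ with $\tilde\theta=a^2\theta$ and $j(\tilde\phi)=j(\phi)$. The compatibility relations in Example~\ref{cont:def} then force $\tilde g|_H=a^2 g|_H$ and $\tilde g(\xi,\xi)=a^4$, while the Sasakian identity is preserved because $d\theta\otimes\xi$ is invariant under $\theta\mapsto a^2\theta$.

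The first substantive step is to observe that the Tanaka-Webster Ricci curvature of $\tilde g$ is bounded below by $-1$. This is the scaling symmetry already recorded in the paragraph following Theorem~\ref{t3}: the simultaneous replacement $\theta\mapsto\lambda\theta$, $t\mapsto t/\sqrt{\lambda}$ preserves the inequality $\mathit{Ricci}\geqslant-t^2$, and setting $\lambda=a^2$ takes the initial value $t=a$ to the new value $\tilde t=1$. With $\mathit{Ricci}_{\tilde g}\geqslant-1$ secured, I would apply Theorem~\ref{main:t6} to $(M,\tilde\theta,\tilde\phi,\tilde g)$ taking both $g_0$ and $g$ in its statement equal to $\tilde g$; then no further conformal change is performed and the ratio $\mathit{Vol}_{g_0}(\Omega)^{1/2}/\mathit{Vol}_g(\Omega)^{1/2}$ collapses to one. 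This gives
$$
N_{\tilde g}(\Omega,\mu)\geqslant\lfloor C_1\mathit{Vol}_{\tilde g}(\Omega)(\mu-C_0)^2\rfloor\qquad\text{for every }\mu\geqslant C_0,
$$
with universal constants $C_0,C_1>0$.

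The remaining step is pure bookkeeping of the scaling exponents. Since the Hausdorff dimension is $Q=4$ in the three-dimensional contact case, the conformal transformation rule~\eqref{conformal} gives $\mathcal{P}_{\tilde g}=a^4\mathcal{P}_g$, which for contact metric manifolds means $\mathit{Vol}_{\tilde g}=a^4\mathit{Vol}_g$. On the other hand $|\nabla_b u|^2_{\tilde g}=a^{-2}|\nabla_b u|^2_g$, so a Rayleigh-quotient comparison yields $\tilde\lambda_k=a^{-2}\lambda_k$ and therefore $N_{\tilde g}(\Omega,\mu)=N_g(\Omega,a^2\mu)$. Substituting these relations into the displayed inequality and then re-parametrising $\lambda=a^2\mu$ delivers exactly the advertised bound $N_g(\Omega,\lambda)\geqslant\lfloor C_1\mathit{Vol}_g(\Omega)(\lambda-C_0 a^2)^2\rfloor$ valid for $\lambda\geqslant C_0 a^2$. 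The removal of the floor function in the compact case follows directly from the corresponding clause of Theorem~\ref{main:t6}. No real obstacle is anticipated; the one place demanding care is consistently tracking the three scaling exponents $a^2$, $a^4$ and $a^{-2}$ attached to the Ricci bound, the volume, and the eigenvalues respectively.
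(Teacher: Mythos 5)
Your proposal is correct and is exactly the argument the paper intends: Corollary~\ref{main:c7} is stated there as a ``direct consequence'' of Theorem~\ref{main:t6}, obtained by the rescaling $\theta\mapsto a^2\theta$ (normalising the Ricci bound to $-1$, as in the discussion following Theorem~\ref{t3}) and then applying Theorem~\ref{main:t6} with $g_0=g$ so that the volume ratio is $1$. Your bookkeeping of the exponents $a^2$, $a^4$, $a^{-2}$ for the Ricci bound, the volume, and the eigenvalues is accurate and recovers the stated inequality.
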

When a subdomain $\Omega\subset M$ above is compact, using the relation 
$$
\lambda_k(g)=\inf\left\{\lambda\geqslant 0: N_g(\lambda)\geqslant k\right\},
$$
it is straightforward to conclude that the lower bound on the counting function $N_g(\Omega,\lambda)$ in Corollary~\ref{main:c7} implies that
$$
\lambda_k(\Omega,g)\leqslant C_0a^2+C_1^{-1/2}\cdot\left(\frac{k}{\mathit{Vol}_g(\Omega)}\right)^{1/2}
$$
for any $k=1,2,\ldots.$ This inequality is a version for $3$-dimensional Sasakian manifolds of a classical Buser's inequality for Laplace eigenvalues on Riemannian manifolds, see~\cite{Bu79,CM08}.
\begin{quest}
Does the conclusion of Theorem~\ref{main:t6} hold in arbitrary dimension?
\end{quest}
As a partial answer to the question above, mention that it is likely that the volume comparison theorems obtained in~\cite{LL} can be extended to Sasakian manifolds, whose horizontal sectional curvatures are bounded below by a negative constant. Given such a result, the argument in the proof of Theorem~\ref{main:t6} yields the lower bound for the counting function under the hypothesis that the horizontal sectional curvatures of $g_0$ are bounded below by $-1$. However, at the moment of writing it is unclear whether a similar volume comparison theorem holds under the lower bound for the Ricci curvature.


\section{Proofs of Theorems~\ref{main:t1} and~\ref{main:t2}}
\label{metas}
\subsection{Decompositions of metric measure spaces}
The proofs of Theorems~\ref{main:t1} and~\ref{main:t2} are based on the constructions of disjoint subsets in metric measure spaces carrying a sufficient amount of mass. Below $(X,d)$ denotes a separable metric space. We start with recalling the following definition.
\begin{defn}
For an integer $N\geqslant 1$ a metric space $(X,d)$ is said to satisfy the {\em local} $N$-{\em covering property} if each metric ball of radius $0<r\leqslant 1$ can be covered by $N$ balls of radius $r/2$. If each metric ball of any radius $r>0$ can be covered by $N$ balls of radius $r/2$, then $(X,d)$ is said to satisfy the {\em global} $N$-{\em covering property.}
\end{defn}
Building on the ideas of Korevaar~\cite{Korv}, Grigoryan, Netrusov, and Yau~\cite{GNY} showed that on certain metric spaces with global covering properties for any non-atomic finite measure one can always find a collection of disjoint annuli carrying a controlled amount of measure. Below by an annulus $A$ in $(X,d)$ we mean a subset of the following form
$$
\{x\in X: r\leqslant d(x,a)<R\},
$$
where $a\in X$ and $0\leqslant r<R<+\infty$. The real numbers $r$ and $R$ above are called the {\em inner} and the {\em outer radii} respectively; the point $a$ is the {\em centre} of an annulus $A$. We also use the notation $2A$ for the annulus
$$
\{x\in X: r/2\leqslant d(x,a)<2R\}.
$$
The following statement is due to~\cite[Corollary~3.12]{GNY}, see also~\cite{GY99}.
\begin{prop}
\label{prop:glo}
Let $(X,d)$ be a separable metric space whose all metric balls are precompact. Suppose that it satisfies the global $N$-covering property for some $N\geqslant 1$. Then for any finite non-atomic measure $\mu$ on $X$ and any positive integer $k$ there exists a collection of $k$ disjoint annuli $\{2A_i\}$ such that
$$
\mu(A_i)\geqslant c\mu(X)/k\qquad\text{for any}\quad 1\leqslant i\leqslant k,
$$
where $c$ is a positive constant that depends on $N$ only. 
\end{prop}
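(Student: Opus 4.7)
My plan is to follow the stopping-time strategy of Korevaar and Grigoryan-Netrusov-Yau, adapted directly from the argument behind Corollary~3.12 of~\cite{GNY}. The starting point is a purely metric consequence of the global $N$-covering property: iterating the hypothesis shows that any ball $B(x,R)$ contains at most $N^j$ pairwise disjoint balls of radius $R/2^j$. Thus any ball decomposes into a bounded-multiplicity family of smaller sub-balls, and the space is metrically doubling with constant controlled by $N$.

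The construction then proceeds iteratively. Fix a small constant $c=c(N)>0$ and an integer $j_0=j_0(N,k)$, both to be determined. Enclose the support of $\mu$ in a single ball $B_0$ and run an iteration that, at each step, maintains a ball $B_j=B(x_j,R_j)$ holding the unused mass, together with a growing list of output annuli. At step $j$, select a maximal family of pairwise disjoint sub-balls of radius $R_j/2^{j_0}$ inside $B_j$. A counting argument based on the $N^{j_0}$ bound yields the following dichotomy: either (I) at least $k$ such sub-balls $B(y_i,r_i)$ carry $\mu$-mass $\geqslant c\mu(B_j)/k$, in which case I stop and build annuli $A_i=\{r_i/4\leqslant d(\cdot,y_i)<r_i\}$, using non-atomicity of $\mu$ to shrink the inner radius if needed so that the mass bound is preserved and the doubled $2A_i$ are pairwise disjoint (the centres $y_i$ are separated by at least $2r_i$ by construction); or (II) a single sub-ball $B_{j+1}\subset B_j$ of radius $R_j/2^{j_0}$ carries at least $(1-c)\mu(B_j)$ of the mass, and I iterate on $B_{j+1}$, recording the shell $B_j\setminus B_{j+1}$ as a candidate annulus of mass at most $c\mu(B_j)$. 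If case (I) eventually fires, the $k$ annuli are produced directly. If case (II) fires indefinitely, the total mass is absorbed into the nested shells $B_j\setminus B_{j+1}$, which are concentric with geometrically decreasing scales; a pigeonhole step extracts $k$ shells each of mass $\geqslant c(N)\mu(X)/k$, and concentric thinning produces the required doubled-disjoint annuli. Termination of the iteration follows from non-atomicity of $\mu$ together with precompactness of balls, which exclude indefinite concentration onto a single point.

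The main obstacle is the joint calibration of three parameters: the scale ratio $2^{-j_0}$ (controlling the branching and the separation of the annulus centres), the inner-to-outer radius ratio of each $A_i$ (ensuring the doubled annuli $2A_i$ remain disjoint), and the concentration threshold $c=c(N)$ (ensuring that, in case (I), the non-selected sub-balls carry a controlled fraction of $\mu(B_j)$, and in case (II), the shells carry small enough mass for the iteration to be efficient). The balancing is dictated by the $N^{j_0}$ bound from the covering property and ultimately produces the single constant $c=c(N)$ in the conclusion. Once these constants are fixed, verifying that the annuli so constructed meet the required lower mass bound and the doubled-disjointness property is a routine bookkeeping exercise.
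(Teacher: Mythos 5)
The paper does not actually prove this proposition: it is quoted from \cite[Corollary~3.12]{GNY} (see also \cite{GY99}), so the only meaningful comparison is with the argument there. Your sketch has the right general philosophy --- a stopping-time alternative between ``mass spreads over many balls'' and ``mass concentrates in one ball'' --- but the central dichotomy you invoke does not follow from any counting argument and is in fact false. Nothing prevents the mass of $B_j$ from being spread over, say, $k-1$ of the sub-balls, each carrying $\mu(B_j)/(k-1)$: then neither alternative (I) (at least $k$ sub-balls of mass $\geqslant c\mu(B_j)/k$) nor alternative (II) (a single sub-ball of mass $\geqslant(1-c)\mu(B_j)$) holds. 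The natural repair --- showing that if fewer than $k$ sub-balls are heavy then their complement in $B_j$ carries little mass --- also fails: a maximal disjoint family of $\rho$-balls covers $B_j$ only after the radii are doubled, and for an arbitrary finite measure there is no way to control $\mu(B(y,2\rho))$ by $\mu(B(y,\rho))$. This is precisely the difficulty that makes the argument of \cite{GNY} delicate; there the annuli are constructed one at a time by an induction that tracks the measure remaining outside the doubled annuli already built, the covering property enters only through packing bounds, and non-atomicity is used together with precompactness of balls to ensure that $\sup_x\mu(B(x,r))\to 0$ as $r\to 0$.

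The concentration branch of your argument is also broken. The sets $B_j\setminus B_{j+1}$ are differences of balls with \emph{different} centres, hence are not annuli, and no ``concentric thinning'' is described that would convert them into annuli with disjoint doubles. More importantly, their masses are bounded \emph{above} by $c\,\mu(B_j)\leqslant c(1-c)^{j}\mu(B_0)$, so they decay geometrically and at most on the order of $c^{-1}\log k$ of them can carry mass $\geqslant\mu(X)/k$; no pigeonhole can extract $k$ of them with masses $\geqslant c(N)\mu(X)/k$ once $k$ is large. (Note also that $\mu(B_j)\to 0$ is perfectly consistent with non-atomicity, so the iteration need not terminate for the reason you give.) The steps of your construction that do work --- shrinking inner radii by non-atomicity so the annulus retains the mass of the ball, and forcing disjointness of the $2A_i$ by taking outer radii a fixed small fraction of the separation of the centres (which for a maximal disjoint family is only $r_i$, not $2r_i$) --- are indeed routine; the genuine gap is in producing the $k$ heavy, well-separated balls in the first place.
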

Mention that when $(X,d)$ is a locally compact length space (for example, a sub-Rieman\-nian manifold with a Carnot-Caratheodory metric), the hypothesis that all metric balls are precompact is equivalent  to the completeness of $(X,d)$, see~\cite{BBI}. The following statement, due to~\cite[Theorem~2.1]{Ha11}, is a version of Prop.~\ref{prop:glo} for metric spaces with local covering properties.
\begin{prop}
\label{prop:loc}
Let $(X,d)$ be a separable metric space whose all metric balls are precompact. Suppose that it satisfies the local $N$-covering property for some $N\geqslant 1$. Then for any finite non-atomic measure $\mu$ on $X$ and any positive integer $k$ there exists a collection of $k$ subsets $\{A_i\}$ such that
$$
\mu(A_i)\geqslant c\mu(X)/k\qquad\text{for any}\quad 1\leqslant i\leqslant k,
$$
where $c$ is a positive constant that depends on $N$ only, and
\begin{itemize}
\item[-] either all $A_i$ are annuli such that  $2A_i$ are mutually disjoint, and the outer radii of the latter are not greater than  $1$;
\item[-] or all $A_i$ are precompact subdomains whose $\rho$-neighbourhoods  
$$
A_i^\rho=\{x\in X:\dist(x,A_i)\leqslant \rho\}
$$
are mutually disjoint, where $\rho=(1600)^{-1}$.
\end{itemize}
\end{prop}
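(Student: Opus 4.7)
The plan is to adapt the Grigoryan--Netrusov--Yau argument underlying Prop.~\ref{prop:glo} to the local covering setting by splitting into two cases according to how the mass of $\mu$ distributes at the critical scale $r=1$. Normalise so that $\mu(X)=1$, and distinguish whether there exists a point $x_0\in X$ and a radius $R\leqslant 1$ such that $\mu(B(x_0,R))\geqslant 1/2$.

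In the first (concentrated) case, I would apply the argument of Prop.~\ref{prop:glo} verbatim inside $B(x_0,1)$. The key observation is that the global $N$-covering hypothesis enters the GNY construction only through sub-balls whose radii are bounded by the diameter of the ambient set under consideration; so long as all such radii remain $\leqslant 1$, the local $N$-covering property serves exactly the same role. This produces $k$ annuli $\{A_i\}$ with $\{2A_i\}$ pairwise disjoint, outer radii automatically at most $1$, and $\mu(A_i)\geqslant c\mu(B(x_0,1))/k\geqslant c/(2k)$ with $c=c(N)$, yielding the first alternative in the conclusion.

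In the second (spread) case, where no ball of radius $\leqslant 1$ carries half the mass, I would extract the subdomains by a greedy Vitali-type selection at sub-unit scales. Start from a maximal $2\rho$-separated net, with $\rho=(1600)^{-1}$; at each step select a point $y_i$ around which a precompact subdomain $A_i$ of diameter bounded by a fixed constant $\leqslant 1$ captures $\mu(A_i)\geqslant c\mu(X)/k$, remove the $2\rho$-neighbourhood of $A_i$, and iterate. The local covering property bounds the overlap multiplicity of the removed regions by a constant depending only on $N$, so that a definite fraction of the mass of $\mu$ remains available after $k$ steps, delivering the second alternative. The precompactness of the $A_i$ follows from the assumption that all metric balls are precompact, together with the diameter bound.

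The main obstacle is the mass lower bound in the spread case: one must quantify the local covering property as a uniform bound on the number of $r$-balls meeting a fixed $r$-ball for $r\leqslant 1$ (a local doubling-type estimate), and propagate this into the greedy extraction. The specific value $\rho=(1600)^{-1}$ arises from tracking constants through this multiplicity bound, ensuring that the $\rho$-neighbourhoods remain disjoint while the $A_i$ still carry substantial mass. A further subtlety is that the dichotomy itself must be chosen jointly with the thresholds in the greedy selection, so that the mass lower bound $c\mu(X)/k$ is achievable in both regimes with the same constant $c=c(N)$; this amounts to calibrating the concentration threshold $1/2$ against the overlap multiplicity appearing in the iteration.
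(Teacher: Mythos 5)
The paper does not actually prove this proposition: it is imported from \cite[Theorem~2.1]{Ha11}, and the text only records that the argument combines the Grigor'yan--Netrusov--Yau annuli construction with the Colbois--Maerten construction of separated capacitors. Your overall strategy --- a dichotomy between a ``concentrated'' regime handled by GNY annuli at scales at most $1$ and a ``spread'' regime handled by a greedy extraction of subdomains with disjoint $\rho$-neighbourhoods --- is therefore the right one in outline and matches the cited argument.

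However, the specific dichotomy you propose fails, and this is a genuine gap rather than bookkeeping. You split according to whether some ball of radius at most $1$ carries at least half of the total mass; this threshold is independent of $k$, whereas the correct dividing line must be calibrated against the per-set target $\mu(X)/k$. Concretely, let $\mu(X)=1$ be distributed as three equal masses of $\tfrac13$ carried by three balls of radius $10^{-6}$ at mutual distances $10$ (say in Euclidean space), and take $k=300$. No ball of radius $\leqslant 1$ has mass $\geqslant\tfrac12$, so your scheme enters the spread case; but any two sets of positive $\mu$-measure meeting the same tiny ball lie at distance at most $2\cdot 10^{-6}<\rho$, so their $\rho$-neighbourhoods intersect, and by pigeonhole at most three sets with pairwise disjoint $\rho$-neighbourhoods can carry positive mass --- the greedy extraction cannot produce $300$ sets of mass $\geqslant c/300$. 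For this measure the conclusion must be realised by the annuli alternative (GNY run at scales $\leqslant 10^{-6}$ inside each tiny ball), which your dichotomy never reaches. The correct case split, as in \cite{Ha11} and \cite{CM08}, is whether all balls of radius $r_0=(1600)^{-1}$ have mass at most a fixed multiple of $\mu(X)/(N^2k)$; showing that the complementary case really yields $k$ annuli of outer radius $\leqslant 1$ is exactly where the work lies, and your closing remark that the thresholds ``must be calibrated'' names the problem without solving it. A secondary issue: in your concentrated case, running GNY verbatim inside $B(x_0,1)$ gives annuli of the subspace (not of $X$) whose radii are only bounded by the diameter $2$, so neither the requirement that the outer radii of the $2A_i$ be at most $1$ nor the availability of the covering property at radii in $(1,2]$ is automatic; both need an explicit capping argument.
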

The proof of Prop.~\ref{prop:loc} is based on the combination of the method developed by Grygoryan, Netrusov, and Yau~\cite{GNY} together with the ideas by Colbois and Marten~\cite{CM08}, see~\cite{Ha11} for details.

\subsection{Lipschitz functions on Carnot-Caratheodory spaces}
Let $(M,H,g)$ be a regular sub-Riemannian manifold, and $(M,d_g)$ be the corresponding Carnot-Caratheo\-dory space. For a compact subdomain $D\subset M$ and a real number $1\leqslant p<+\infty$ consider the sub-Rieman\- nian Sobolev norm
\begin{equation}
\label{norm}
\left(\int_D\abs{u}^pd\mathcal{P}_g\right)^{1/p}+\left(\int_D\abs{\nabla_bu}^pd\mathcal{P}_g\right)^{1/p}
\end{equation}
Recall that a real-valued function $u$ on $M$ is called Lipschitz in the sense of the Carnot-Caratheodory metric $d$, if there exists a constant $L$  such that
$$
\abs{u(x)-u(y)}\leqslant L\cdot d_g(x,y)\qquad\text{ for any }x\text{ and }y\text{ in }M.
$$
As is known, any smooth function is locally Lipschitz in the above sense. We proceed with the following proposition, due to~\cite{FSSC96,FSSC97} and~\cite{GN98}.
\begin{prop}
\label{lip}
Let $D$ be a compact subdomain in  a regular sub-Riemannian manifold $(M,H,g)$, and $u$ be a Lipschitz function in the sense of the Carnot-Caratheodory distance $d_g$ on it.
\begin{itemize}
\item[(i)] Then the distributional derivative $\nabla_bu$ is a measurable bounded vector-field whose norm $\abs{\nabla_bu}$ is not greater than the Lipschitz constant $L$. 
\item[(ii)] The function $u$ can be approximated in the norm~\eqref{norm} by smooth functions on $D$. Moreover, if $u$ is compactly supported, that it can be approximated in the norm~\eqref{norm} by smooth compactly supported functions.
\end{itemize}
\end{prop}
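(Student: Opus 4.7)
\medskip

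\textbf{Proposal.} My strategy splits cleanly along the statement. For part~(i), the key observation is that integral curves of horizontal vector fields are horizontal paths, and as such are $1$-Lipschitz from $(\mathbb{R},|\cdot|)$ into $(M,d_g)$ whenever the vector field has unit $g$-norm. I would therefore fix a small open set $U \subset D$ on which a smooth orthonormal frame $X_1,\dots,X_m$ of $H$ exists, and for any horizontal unit vector $X = \sum a_i X_i$ with $\sum a_i^2 = 1$ consider the local flow $\phi^X_t$. Horizontality of this flow gives $d_g(\phi^X_t(x),x) \leqslant |t|$, so $t \mapsto u(\phi^X_t(x))$ is a real Lipschitz function with constant $L$, hence classically differentiable almost everywhere with derivative bounded by $L$. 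Integrating against a test function and using Fubini along the foliation by orbits of $X$, one obtains that the distributional derivative $Xu$ is represented by an $L^\infty$ function with $|Xu|\leqslant L$ a.e. Applying this to horizontal vectors that point in the direction of the distributional horizontal gradient yields $|\nabla_b u|\leqslant L$ almost everywhere on $U$, and a covering of $D$ closes part~(i).

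For part~(ii), I would use a Meyers--Serrin type argument adapted to the sub-Riemannian setting. Cover $D$ by finitely many coordinate patches $U_\alpha$ trivialising the frame, take a subordinate smooth partition of unity $\{\chi_\alpha\}$, and set $u_{\alpha} = \chi_\alpha u$ so that each $u_\alpha$ is compactly supported Lipschitz in $U_\alpha$. In coordinates on $U_\alpha$, define $u^\varepsilon_\alpha = u_\alpha * \rho_\varepsilon$ by Euclidean convolution with a standard smooth mollifier $\rho_\varepsilon$; then $u^\varepsilon_\alpha \in C^\infty_c(U_\alpha)$ and $u^\varepsilon_\alpha \to u_\alpha$ in $L^p$ as $\varepsilon\to 0$. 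For horizontal derivatives, write
\begin{equation*}
X_i(u^\varepsilon_\alpha) = (X_i u_\alpha)*\rho_\varepsilon + [X_i, \ast\rho_\varepsilon](u_\alpha),
\end{equation*}
where the commutator encodes the fact that the coefficients of $X_i$ are non-constant. The first term converges to $X_i u_\alpha$ in $L^p(\mathcal{P}_g)$ by classical mollifier theory together with part~(i) (which ensures $X_iu_\alpha \in L^\infty$). Summing over $\alpha$ gives $u^\varepsilon := \sum_\alpha u^\varepsilon_\alpha \to u$ in the Sobolev norm~\eqref{norm}. For the compactly supported refinement, one additionally arranges that each $\chi_\alpha$ has support well inside $D$ (or inside a fixed compact neighbourhood of $\supp u$) and chooses $\varepsilon$ small enough that the supports of the $u^\varepsilon_\alpha$ stay inside those fixed compacta.

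The main obstacle, and the reason this is not entirely routine, is controlling the commutator $[X_i,\ast\rho_\varepsilon](u_\alpha)$: convolution is in Euclidean coordinates while $X_i$ is a vector field with smooth but non-constant coefficients, so this commutator reduces, via a Taylor expansion of the coefficients of $X_i$ around each convolution point, to an integral kernel whose size is $O(\varepsilon)$ times a standard mollifier of $u_\alpha$. Since $u_\alpha$ is bounded, this tends to zero in $L^\infty$, hence in $L^p$, on the (compact) support under consideration. Once this estimate is in place, the $L^p$-convergence of $\nabla_b u^\varepsilon$ to $\nabla_b u$ follows, completing~(ii). The density function of $\mathcal{P}_g$ is smooth and bounded above and below on the compact $D$, so swapping $\mathcal{P}_g$ for Lebesgue in the above $L^p$ estimates only costs a multiplicative constant and does not affect the argument.
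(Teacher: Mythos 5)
The paper does not prove this proposition at all: it is imported verbatim from Franchi--Serapioni--Serra Cassano \cite{FSSC96,FSSC97} and Garofalo--Nhieu \cite{GN98}, with only the remark that part~(ii) is ``an application of the mollification technique'' of \cite[p.~79--81]{GN98}. What you have written is essentially a reconstruction of the arguments in those references -- the flow-line/difference-quotient argument for~(i) and the Meyers--Serrin mollification with a Friedrichs-type commutator estimate for~(ii) -- so there is nothing in the paper to compare against except the citation; your sketch supplies the content the paper outsources. Two places deserve tightening. In~(i), the phrase ``applying this to horizontal vectors that point in the direction of the distributional horizontal gradient'' cannot be taken literally, since the gradient direction is only measurable and the flow argument needs a fixed smooth field; the standard fix is to run the argument for a countable dense set of constant coefficient vectors $a\in S^{m-1}$, obtaining $\abs{\sum a_iX_iu}\leqslant L$ a.e.\ for each, and then take the supremum to get $\abs{\nabla_bu}\leqslant L$ a.e. In~(ii), the claim that the commutator $[X_i,\ast\rho_\varepsilon](u_\alpha)$ tends to zero in $L^\infty$ ``since $u_\alpha$ is bounded'' is not justified as stated: after moving the derivative onto the kernel, the commutator kernel $(b_j(x)-b_j(y))\partial_{y_j}\rho_\varepsilon(x-y)$ is only a \emph{uniformly bounded} family of approximate identities, and boundedness of $u_\alpha$ alone gives uniform boundedness of the commutator, not its vanishing. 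Vanishing follows either from the continuity of $u_\alpha$ (split $u_\alpha(y)=u_\alpha(x)+(u_\alpha(y)-u_\alpha(x))$ and use the modulus of continuity) or from the usual Friedrichs density argument in $L^p$; note also that one must avoid interpreting $(X_iu_\alpha)\ast\rho_\varepsilon$ through the individual Euclidean partials $\partial_ju_\alpha$, which for a merely Carnot--Caratheodory--Lipschitz function need not be locally integrable functions in the non-horizontal directions -- only the combinations $X_iu_\alpha$ are controlled by part~(i). With these repairs your argument is the one in \cite{FSSC96,GN98}.
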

Mention that the second statement of Prop.~\ref{lip} is an application of the mollification technique, explained in~\cite[p.79-81]{GN98}. Prop.~\ref{lip} shows that compactly supported Lipschitz functions can be used as test-functions for the counting function $N_g(\Omega,\lambda)$ of the Neumann eigenvalue problem on $\Omega\subset M$. Below we construct such functions out of the distance function $x\mapsto\dist(x,A)$ to a compact subset $A$.

\subsection{Proof of Theorem~\ref{main:t2}}
Let $(M,H,g_0)$ be a complete regular sub-Riemannian manifold that is globally $(N,\alpha)$-normalised, and $\Omega\subset M$ be a subdomain with a smooth boundary. We consider $M$ as a metric space with the Carnot-Caratheodory metric $d_{g_0}$ and with a measure $\mu=\left.\mathcal P_g\right|\Omega$, where $g$ is a metric conformal to $g_0$ on $\Omega$. Let $C_*=C_*(N)$ be a constant from Prop.~\ref{prop:glo} applied to $(M,d_{g_0})$. For a given $\lambda>0$ denote by $k$ the integer
$$
\lfloor C\cdot(\alpha\max_\Omega\hat{\mathcal P}_x(\hat{B}_x))^{-1}\mu(M)\lambda^{Q/2}\rfloor,
$$
where $C=(C_*/48)^{Q/2}$. For a proof of the theorem it is sufficient to construct $k$ linearly independent Lipschitz test-functions $u_i$ such that
\begin{equation}
\label{t2:eq1}
\int\limits_\Omega\abs{\nabla_bu_i}^2d\mathcal P_g<\lambda\int\limits_\Omega u_i^2d\mathcal P_g.
\end{equation}
By Prop.~\ref{prop:glo} there exists a collection $\{A_i\}$ of $3k$ annuli in $(M,d_{g_0})$ such that
\begin{equation}
\label{t2:eq2}
\mu(A_i)\geqslant C_*\mu(M)/(3k)\qquad\text{for any}\quad i=1,\ldots,3k,
\end{equation}
and the annuli $\{2A_i\}$ are disjoint. The latter implies that
$$
\sum_{i=1}^{3k}\mu(2A_i)\leqslant\mu(M),
$$
and hence, there exist at least $k$ sets $2A_i$ such that
\begin{equation}
\label{t2:eq3}
\mu(2A_i)\leqslant \mu(M)/k.
\end{equation}
Without loss of generality, we may suppose that these inequalities hold for $i=1,\ldots,k$. For such an $i$ denote by $a_i$, $r_i$ and $R_i$ the centre, the inner radius and the outer radius of $A_i$ respectively. We define the Lipschitz test-functions $u_i$ on $2A_i$ by the following relation
$$
u_i(x)=\left\{
\begin{array}{ll}
1 & \text{for }\quad r_i\leqslant d_{g_0}(x,a_i)<R_i;\\
2d_{g_0}(x,a_i)/r_i-1 & \text{for }\quad r_i/2\leqslant d_{g_0}(x,a_i)<r_i;\\
2-d_{g_0}(x,a_i)/R_i & \text{for }\quad R_i\leqslant d_{g_0}(x,a_i)<2R_i;\\
0 & \text{for }\quad d_{g_0}(x,a_i)\geqslant 2R_i\text{ ~or~ }d_{g_0}(x,a_i)<r_i/2.
\end{array}
\right.
$$ 
By Prop.~\ref{lip}, we see that the horizontal gradient satisfies 
$$
\abs{\nabla_b u_i}(x)\leqslant\left\{
\begin{array}{ll}
2/r_i & \text{when}\quad r_i/2\leqslant d_{g_0}(x,a_i)<r_i,\\
1/R_i & \text{when}\quad R_i\leqslant d_{g_0}(x,a_i)<2R_i,\\
\end{array}
\right.
$$ 
and vanishes at all other points in $M$. Note that if $Q$ is the Hausdorff dimension of $M$, then the integral $\int\abs{\nabla_bu}^Qd\mathcal P_g$ is invariant under the conformal change of a metric; this is a consequence of relation~\eqref{conformal} in Section~\ref{prem}. Thus, we obtain
\begin{multline*}
\int_\Omega\abs{\nabla_bu_i}^Qd\mathcal P_g=\int_\Omega\abs{\nabla_bu_i}^Qd\mathcal P_{g_0}\leqslant \left(\frac{2}{r_i}\right)^Q\mathcal P_{g_0}(B(a_i,r_i))+\left(\frac{1}{R_i}\right)^Q\mathcal P_{g_0}(B(a_i,2R_i))\\
\leqslant 2\cdot 2^Q(\alpha\max_\Omega\hat{\mathcal P}_x(\hat{B}_x))< 4^Q(\alpha\max_\Omega\hat{\mathcal P}_x(\hat{B}_x)),
\end{multline*}
where in the second inequality we used the growth property of the globally $(N,\alpha)$-normalised metric $g_0$. Now by the H\"older inequality, we have
\begin{multline*}
\int_\Omega\abs{\nabla_bu_i}^2d\mathcal P_g\leqslant \left(\int_\Omega\abs{\nabla_bu_i}^Qd\mathcal P_g\right)^{2/Q}\left(\int_{2A_i\cap\Omega}1d\mathcal P_g\right)^{1-2/Q}\\
< 16\cdot (\alpha\max_\Omega\hat{\mathcal P}_x(\hat{B}_x))^{2/Q}\mu(2A_i)^{1-2/Q}.
\end{multline*}
Finally, using relations~\eqref{t2:eq2} and~\eqref{t2:eq3}, we obtain
$$
\left(\int_\Omega\abs{\nabla_bu_i}^2d\mathcal P_g\right)/\left(\int_\Omega u_i^2d\mathcal P_g\right)<
\frac{16}{C_*}(\alpha\max_\Omega\hat{\mathcal P}_x(\hat{B}_x))^{2/Q}\frac{\mu(M)^{1-2/Q}(3k)}{k^{1-2/Q}\mu(M)}\leqslant\lambda,
$$
where in the last inequality we used the definition of $k$. Thus, the first statement of the theorem is demonstrated. To prove the estimate for the counting function when a subdomain $\Omega\subset M$ is compact, we note that in addition to the functions $u_i$ above, any non-zero constant function qualifies as an extra test-function. \qed

\subsection{Proof of Theorem~\ref{main:t1}}
Let $\Omega\subset M$ be a subdomain with a smooth boundary. Pick a  locally $(N,\alpha)$-normalised metric $g_*$ from a given conformal class on $M$, and denote by $C_*=C_*(N)$ the constant from Prop.~\ref{prop:glo} applied to the Carnot-Caratheodory space $(M,d_{g_*})$. We define new constants
$$
C_0(N)=3\cdot(1600)^2/C_*\qquad\text{and}\qquad C_1(N,Q)=(C_*/48)^{Q/2}.
$$
Let $g$ be a metric on $\Omega$ conformal to $g_*$, and $\lambda> C_0(\mathit{Min}\mathcal P(\Omega,[g_0])/\mathcal P_g(\Omega))^{2/Q}$ be a given real number. Since $C_0=C_0(N)$ depends only on $N$, we may choose another locally $(N,\alpha)$-normalised metric conformal to $g_*$, which we denote by $g_0$, such that 
\begin{equation}
\label{aux:lambda}
\lambda> C_0(\mathcal P_{g_0}(\Omega)/\mathcal P_g(\Omega))^{2/Q}.
\end{equation} 
Now the strategy is to apply Prop.~\ref{prop:loc} to the Carnot-Caratheodory space $(M,d_{g_0})$ equipped with a measure $\mu=\left.\mathcal P_g\right|\Omega$. First, denote by $\mu_0$ the measure $\left.\mathcal P_{g_0}\right|\Omega$, and by $k$ the integer
$$
\lfloor C_1(\alpha\max_\Omega\hat{\mathcal P}_x(\hat{B}_x))^{-1}(\mu(M)^{2/Q}\lambda-C_0(\mu_0(M))^{2/Q})^{Q/2}\rfloor.
$$
By Prop.~\ref{prop:loc} there exists a collection $\{A_i\}$ of $3k$ subsets in $(M,d_{g_0})$ such that
\begin{equation}
\label{t1:eq1}
\mu(A_i)\geqslant C_*\mu(M)/(3k)\qquad\text{for any}\quad i=1,\ldots,3k,
\end{equation}
and either
\begin{itemize}
\item[(i)] all $A_i$ are annuli, and $2A_i$ are mutually disjoint and the outer radii of the latter are not greater than $1$, or
\item[(ii)] all $A_i$ are precompact subdomains whose $\rho$-neighbourhoods $A_i^\rho$ are mutually disjoint, where $\rho=(1600)^{-1}$.
\end{itemize}
In the first case we proceed following the lines of the proof of Theorem~\ref{main:t2} to construct $k$ linearly independent test-functions $u_i$ such that
$$
\int\limits_\Omega\abs{\nabla_bu_i}^2d\mathcal P_g<\lambda\int\limits_\Omega u_i^2d\mathcal P_g.
$$
Now we explain the argument for the case~$(ii)$. First, since $A_i^{\rho}$ are mutually disjoint, we have
$$
\sum_{i=1}^{3k}\mu(A_i^\rho)\leqslant\mu(M)\qquad\text{and}\qquad\sum_{i=1}^{3k}\mu_0(A_i^\rho)\leqslant\mu_0(M).
$$
Hence, there exist at least $k$ subsets $A_i$ such that
\begin{equation}
\label{t1:eq2}
\mu(A_i^\rho)\leqslant \mu(M)/k\qquad\text{and}\qquad \mu_0(A_i^\rho)\leqslant \mu_0(M)/k.
\end{equation}
Without loss of generality, we may suppose that these inequalities hold for $i=1,\ldots,k$. For such an $i$  we define the Lipschitz test-functions $u_i$ on $A_i^\rho$ by the following relation
$$
u_i(x)=\left\{
\begin{array}{ll}
1 & \text{for }\quad x\in A_i\\
1-\dist(x,A_i)/\rho & \text{for }\quad x\in A_i^\rho\backslash A_i\\
0 & \text{for }\quad x\in M\backslash A_i^\rho.
\end{array}
\right.
$$ 
By Prop.~\ref{lip}, we see that $\abs{\nabla_bu_i}\leqslant 1/\rho$, and hence, we obtain
$$
\int_\Omega\abs{\nabla_bu_i}^Qd\mathcal P_g=\int_\Omega\abs{\nabla_bu_i}^Qd\mathcal P_{g_0}\leqslant 
\rho^{-Q}\mu_0(A_i^\rho)\leqslant \rho^{-Q}\mu_0(M)/k,
$$
where the first relation is the conformal invariance property, and in the last we used the second relation in~\eqref{t1:eq2}. By the H\"older inequality and the first relation in~\eqref{t1:eq2}, we further have
\begin{multline*}
\int_\Omega\abs{\nabla_bu_i}^2d\mathcal P_g\leqslant \left(\int_\Omega\abs{\nabla_bu_i}^Qd\mathcal P_g\right)^{2/Q}\left(\int_{A_i^\rho\cap\Omega}1d\mathcal P_g\right)^{1-2/Q}\\
\leqslant \rho^{-2}(\mu_0(M)/k)^{2/Q}\mu(A_i^\rho)^{1-2/Q}\\ 
\leqslant \rho^{-2}(\mu_0(M)/k)^{2/Q}(\mu(M)/k)^{1-2/Q}.
\end{multline*}
Now we use relation~\eqref{t1:eq1} to obtain
$$
\left(\int_\Omega\abs{\nabla_bu_i}^2d\mathcal P_g\right)/\left(\int_\Omega u_i^2d\mathcal P_g\right)\leqslant
3\frac{\rho^{-2}}{C_*}(\mu_0(M)/\mu(M))^{2/Q}<\lambda,
$$
where in the last inequality we used hypothesis~\eqref{aux:lambda}. Thus, the first statement of the theorem is proved. The estimate for the counting function on a compact subdomain $\Omega\subset M$ follows from the observation that any non-zero constant function qualifies as an extra test-function. \qed
\subsection{Proof of Theorem~\ref{main:t1}bis} The proof of the theorem follows the line of argument in the proof of Theorem~\ref{main:t1} with an obvious substitution of the Hausdorff measure $\mathcal S_g$ for the Popp measure $\mathcal P_g$. \qed

\appendix
\section{Proof of Lemma~\ref{volume}}
\label{append}
\noindent
Recall that the Lie algebra $\mathfrak g$ of the Carnot group $G$ has the grading
$$
\mathfrak g=\mathfrak g_1\oplus\mathfrak g_2,\quad\mathfrak g_2=[\mathfrak g_1,\mathfrak g_1],\quad [\mathfrak g_1,\mathfrak g_2]=0,\quad\dim\mathfrak g_2=1,
$$
where $\mathfrak g_1=H_e$ is a subspace in the Lie algebra corresponding to the fiber of the left-invariant distribution $H$ over the identity. Let $g$ be a left-invariant metric on $G$. First, we treat the case when the dimension of $\mathfrak g_1$ is even, $\dim\mathfrak g_1=2\ell$. Then there is an orthonormal basis $X_1,\ldots,X_\ell$, $Y_1,\ldots,Y_\ell$ of $\mathfrak g_1$ and a vector $Z\ne 0$ from $\mathfrak g_2$ such that
\[\begin{array}{ll}
[X_i,Y_i]=-b_iZ, & i=1,\ldots,\ell, \\ 
{[X_i,Y_j]=0}, & i\ne j, \\
{[X_i,Z]=[Y_i,Z]=0}, &  i=1,\ldots,\ell,
\end{array}\]
where $b_i\geqslant 0$. It is worth mentioning that by the Campbell-Hausdorff formula these relations recover the product structure on $G$. More precisely, denoting points $q\in G$ by triples $(x,y,z)$, where $x,y\in\mathbb R^\ell$ and $z\in\mathbb R$, one can write the group law in these coordinates
$$
q\cdot q'=(x+x',y+y',z+z'-\frac{1}{2}\sum_{i=1}^\ell b_i(x_ix'_i-y_iy'_i)).
$$
Without loss of generality, we may assume that the group $G$ does not have a Euclidean factor, that is all the $b_i$'s are strictly positive. Further, since the scaling of a metric $g$ does not change the volume of a unit ball, we may assume that $\max b_i$ equals $1$. In~\cite[Sect.~5]{ABB12} the authors compute the volume of a unit ball in $G$. Under our assumptions, it is given by the following formula
\begin{equation}
\label{voub}
\mathcal P_g(B_1)=\frac{C_\ell}{B^2}\int\limits_0^\pi s^{-(2\ell+2)}\sum_{i=1}^\ell\left(\prod_{j\ne i}\sin^2(b_js)\right)\sin(b_is)(b_is\cos(b_is)-\sin(b_is))ds,
\end{equation} 
where $B=\prod b_i$, and $C_\ell$ is the constant that depends only on $\ell$. This formula is obtained by introducing coordinates where the Popp measure coincides with the Lebesgue measure.

Now we show that the integral above is bounded independently of the values of the $b_i$'s. Using the Taylor series and the relations $0<b_i\leqslant 1$, we obtain
$$
\abs{b_is\cos(b_is)-\sin(b_is)}\leqslant c_*(b_is)^3\qquad\text{for any~}s\in [0,\pi],
$$
where $c_*$ is a constant that can be taken to be $\exp(\pi)$. Thus, for any $i=1,\ldots,\ell$, we have
\begin{multline*}
\frac{1}{B^2}\int\limits_0^\pi s^{-(2\ell+2)}\left(\prod_{j\ne i}\sin^2(b_js)\right)\sin(b_is)(b_is\cos(b_is)-\sin(b_is))ds\\ \leqslant\frac{c_*}{b_i^2}\int\limits_0^\pi s^{-4}\sin(b_is)(b_is)^3ds\leqslant c_*\pi,
\end{multline*}
where in the last inequality we again used $\abs{b_i}\leqslant 1$. Combining the relation above with formula~\eqref{voub}, we obtain a bound for the volume $\mathcal P_g(B_1)$ of a unit ball.

For a proof of the lemma it remains to consider the case when the dimension of $\mathfrak g_1$ is odd, $\dim\mathfrak g_1=2\ell+1$. Then one can choose an orthonormal basis $X_1,\ldots,X_\ell$, $Y_1,\ldots,Y_\ell$, $T$ of $\mathfrak g_1$ and a vector $Z\ne 0$ from $\mathfrak g_2$ such that
$$
[X_i,Y_i]=-b_iZ,\qquad i=1,\ldots,\ell,
$$
and all other brackets between the $X_i$'s, $Y_i$'s, $T$, and $Z$ vanish. As is explained in~\cite[Sect.~5.4]{ABB12}, the volume of a unit ball in $G$ in this case is given by the same formula~\eqref{voub}, and hence, is also bounded independently of a left-invariant metric $g$.

\subsection*{Acknowledgements} The authors are grateful to Andrei Agrachev, Davide Barilari, Bruno Colbois, and Ahmad El Soufi for the discussions on the subject. During the work on the paper the first author was partially supported by the Bavarian Equal Opportunities Sponsorship Programme (Bayerische Gleichstellungsf\"orderung) for postdoctoral researchers at the LMU Munich. The part of the project has been completed while the authors were visiting the Centre de recherches math\'ematiques in Montr\'eal; in particular, the first author acknowledges the support of the CRM-ISM postdoctoral fellowship.

\end{document}